\newcommand{\Rep}{\operatorname{Rep}}
\newcommand{\im}{\operatorname{im}}
\newcommand{\id}{\operatorname{id}}
\newcommand{\Hom}{\operatorname{Hom}}
\newcommand{\Ext}{\operatorname{Ext}}
\newcommand{\rk}{\operatorname{rk}}
\newcommand{\Fl}{\operatorname{Fl}}
\newcommand{\pt}{\operatorname{pt}}
\newcommand{\Eq}{\operatorname{Eq}}
\newcommand{\Sq}{\operatorname{Sq}}
\newcommand{\Spec}{\operatorname{Spec}}
\newcommand{\hocolim}{\operatorname{hocolim}}
\newcommand{\coker}{\operatorname{coker}}
\newcommand{\Nil}{\operatorname{Nil}}
\renewcommand{\top}{\operatorname{top}}
\newcommand{\C}{\mathbb{C}}
\newcommand{\F}{\mathbb{F}}
\newcommand{\Z}{\mathbb{Z}}
\newcommand{\K}{\mathcal{K}}
\newcommand{\U}{\mathcal{U}}
\newcommand{\A}{\mathcal{A}}
\renewcommand{\AA}{\mathbb{A}}
\newcommand{\N}{\mathcal{N}}
\newcommand{\I}{\mathcal{I}}
\newcommand{\E}{\mathcal{E}}
\newcommand{\mO}{\mathcal{O}}
\theoremstyle{plain}
  \newtheorem{thm}{Theorem}[section]
  \newtheorem*{thm*}{Theorem}
  \newtheorem{prop}[thm]{Proposition}
  \newtheorem{lem}[thm]{Lemma}
  \newtheorem{cor}[thm]{Corollary}
  \newtheorem*{cor*}{Corollary}
\theoremstyle{definition}
  \newtheorem{rem}[thm]{Remark}
\theoremstyle{remark}
  \newtheorem*{defn*}{Definition}
  \newtheorem*{conjecture*}{Conjecture}
  \newtheorem*{note*}{Notation}
  \newtheorem*{question*}{Question}
\title{Lannes's $T$-functor and equivariant Chow rings}
\author{David Hemminger}
\newcommand{\address}{{
  \small

  \textsc{UCLA Mathematics Department, Box 951555, Los Angeles, CA 90095-1555}\par\nopagebreak
  \texttt{dhemminger@math.ucla.edu}
}}
\begin{document}

\maketitle

\begin{abstract}
For $X$ a smooth scheme acted on by a linear algebraic group $G$ and $p$ a prime, the equivariant Chow ring $CH^*_G(X)\otimes \F_p$ is an unstable algebra over the Steenrod algebra.  We compute Lannes's $T$-functor applied to $CH^*_G(X)\otimes \F_p$.  As an application, we compute the localization of $CH^*_G(X)\otimes \F_p$ away from $n$-nilpotent modules over the Steenrod algebra, affirming a conjecture of Totaro as a special case.  The case when $X$ is a point and $n = 1$ generalizes and recovers an algebro-geometric version of Quillen's stratification theorem proved by Yagita and Totaro.
\end{abstract}

A celebrated result in group cohomology is Quillen's stratification theorem \cite[Theorem 6.2]{Quillen71}, which says that for $G$ a compact Lie group and $p$ a prime, the map 
$$H^*(BG,\F_p) \rightarrow \lim_{E \subseteq G}H^*(BE,\F_p)$$
induced by restriction maps is an \emph{F-isomorphism} --- every element of the kernel is nilpotent, and some $p$-power of every element of the codomain lies in the image.  Here the limit is taken over the subgroups $E \subseteq G$ where $E \cong (\Z/p)^k$ for some $k\ge 0$, called \emph{elementary abelian subgroups}, with maps given by conjugations and inclusions.  We will write $\E$ for the category of elementary abelian $p$-groups and $H^*_G$ for $H^*(BG,\F_p)$.

Henn, Lannes, and Schwartz generalized Quillen's theorem in \cite{HLS93} using the theory of unstable modules over the Steenrod algebra, which is summarized in Section \ref{sec:unstable_modules}.  The cohomology ring $H^*(X, \F_p)$ of a space $X$ is naturally an object of $\U^{\top}$, an abelian full subcategory of modules over the mod $p$ Steenrod algebra $\A$ called the category of unstable modules over $\A$.  There is a sequence of localizing subcategories $\U^{\top} \supseteq \Nil_1^{\top} \supseteq \Nil_2^{\top} \supseteq ...$, where $\Nil_n^{\top}$ is the smallest localizing subcategory of $\U^{\top}$ containing all $n$th suspensions of unstable modules and $\bigcap_{n = 1}^\infty \Nil_n^{\top} = 0$.  Theorem 5.4 of \cite{HLS93} computes the localization $L_n^{\top}(H^*_G)$ of $H^*_G$ away from $\Nil_n^{\top}$ for a compact Lie group $G$ in terms of the rings $H^*_E$ and the rings $H^*_{C_G(E)}$ in degrees less than $n$, where $E$ ranges over the elementary abelian subgroups of $G$.  For $n = 1$, the theorem says that $L_1^{\top}(H^*_G) \cong \lim_E H^*_E$, which implies Quillen's theorem.

To what extent do analogs of these results hold in algebraic geometry?  Morel-Voevodsky and Totaro defined the Chow ring $CH^*BG$ of the classifying space of an affine group scheme $G$ over a field $k$ (\cite{MV99}, \cite{Totaro99}).  Let $CH^*_G = CH^*BG \otimes_\Z \F_p$.  More generally, Edidin-Graham defined the equivariant Chow ring of a smooth $G$-scheme X over a field $k$ (\cite{EG98}); let $CH^*_GX$ denote this ring modulo $p$.

Compared to group cohomology, much less is known about Chow rings of classifying spaces. The groups $CH^i_G$ can be infinitely generated $\F_p$-vector spaces, even when $G$ is finite and $k$ is a field of characteristic zero \cite[Corollary 3.2]{Totaro16}, and it is an open question whether $CH^i_G$ must be finite-dimensional when $k$ is algebraically closed.  Furthermore, there is no known algorithm for computing the mod $p$ Chow ring of a finite group.  Nevertheless, some results for group cohomology still hold for Chow rings.  Yagita proved an analog of Quillen's theorem (\cite[Theorem 3.1]{Yagita02}): for $G$ a \emph{finite} group over $\C$, the map
$$CH^*_G \rightarrow \lim_{E \subseteq G}CH^*_E$$
is an F-isomorphism.  Since the cycle map $CH^*_E \rightarrow H^*_E$ is an F-isomorphism for all $E \in \E$, it follows that $CH^*_G \rightarrow H^*_G$ is an F-isomorphism for all finite groups $G$.  Totaro generalized Yagita's theorem to apply for fields $k$ of characteristic not $p$ and containing the $p$th roots of unity \cite{Totaro14}.

Brosnan and Voevodsky constructed Steenrod operations on the Chow ring (\cite{Brosnan03}, \cite{Voevodsky03}), and for any affine group scheme $G$, $CH^*_GX$ can be viewed as an object in the full subcategory of $\U^{\top}$ containing the unstable modules concentrated in even degrees.  Suppose $G$ is a linear algebraic group over a field $k$ of characteristic not $p$ containing the $p$th roots of unity.  In this paper, we compute the localization of $CH^*_GX$ away from $\Nil_n$, the subcategory of $n$-nilpotent unstable modules concentrated in even degrees.

\begin{thm*}[see Theorem \ref{thm:localization-away-from-nil-p}]
There is a map from $CH^*_GX$ to an equalizer
$$\begin{tikzcd}
\Eq \colon \displaystyle\prod_{E}CH^*_E \otimes CH^{<n}_{C_G(E)}X^E \arrow[r,shift left = .75ex,""] \arrow[r,shift right = .75ex, swap, ""]& \displaystyle\prod_{E_1 \rightarrow E_2} CH^*_{E_1} \otimes (CH^*_{E_1} \otimes CH^*_{C_G(E_2)}X^{E_2})^{<n}
\end{tikzcd}$$
realizing localization away from $\Nil_n$.
\end{thm*}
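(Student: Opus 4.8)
The plan is to write down the comparison map $\phi\colon CH^*_GX\to\Eq$ explicitly and then verify the two properties characterizing a localization away from $\Nil_n$ (Section~\ref{sec:unstable_modules}): that $\Eq$ is $\Nil_n$-local, and that $\ker\phi$ and $\coker\phi$ lie in $\Nil_n$. Both will be checked by applying Lannes's $T$-functor, the essential input being our computation of $T(CH^*_GX)$ together with its naturality in $G$ and $X$.

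To construct $\phi$: for an elementary abelian $p$-subgroup $E\subseteq G$, multiplication gives a homomorphism $E\times C_G(E)\to G$, $(e,c)\mapsto ec$ (a homomorphism because $C_G(E)$ centralizes $E$), and $X^E\hookrightarrow X$ is equivariant for the induced action, with $E$ acting trivially on $X^E$. Pulling back along these maps, using the Künneth isomorphism $CH^*_{E\times C_G(E)}X^E\cong CH^*_E\otimes CH^*_{C_G(E)}X^E$ (valid since $CH^*BE$ is polynomial), projecting the second tensor factor onto its degree $< n$ part, and taking the product over conjugacy classes of $E$ gives the top map of the diagram. Its two arrows are assembled from restriction along the analogous homomorphisms $E_1\times C_G(E_2)\to G$ attached to a morphism $E_1\hookrightarrow E_2$ in the Quillen category of $G$, together with the restrictions $CH^*_{E_2}\to CH^*_{E_1}$; compatibility of restriction maps with inclusions and conjugations shows the two composites $CH^*_GX\to\prod_{E_1\to E_2}(\cdots)$ agree, so the top map factors through a map $\phi$ into $\Eq$.

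For $\Nil_n$-locality: working in the even-degree subcategory, each block $CH^*_E\otimes CH^{<n}_{C_G(E)}X^E$ is the tensor product of $CH^*_E$, which is $\Nil_1$-local (a polynomial algebra on degree-$2$ generators, the even-degree counterpart of $H^*_E$), with the truncated algebra $CH^{<n}_{C_G(E)}X^E$, which is concentrated in degrees $< n$ and hence $\Nil_n$-local; since $\Nil_n$-local objects form a reflective subcategory closed under tensor products \cite{HLS93}, each block is $\Nil_n$-local, and an equalizer of a map between products of $\Nil_n$-local objects is $\Nil_n$-local. The main point is that $\phi$ is a $\Nil_n$-isomorphism: by the $T$-functor criterion for $n$-nilpotence it suffices to show $T_V(\phi)$ is an isomorphism in degrees $< n$ for every $V\in\E$. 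Since $G$ is linear algebraic over a field of characteristic $\neq p$ it has finitely many conjugacy classes of elementary abelian $p$-subgroups (the algebro-geometric analogue of Quillen's finiteness), and the sets $\Hom(V,E)/{\sim}$ are finite, so every product occurring is finite; as $T_V$ is exact it commutes with these products and with the equalizer, and as it is monoidal it commutes with the tensor factorizations. Feeding our formula for $T(CH^*_HY)$ into each factor (with $H=C_G(E)$, $Y=X^E$, and with $H=E$, $Y=\pt$), and using the interaction of $T_V$ with the degree $< n$ truncation from \cite{HLS93}, reorganizes $T_V(\Eq)$ in degrees $< n$ as an equalizer of the same shape, now built from the fixed-point data attached to homomorphisms $V\to G$; by naturality of the $T$-functor computation this identifies $T_V(\phi)$ in degrees $< n$ with the analogous comparison map for $T_V(CH^*_GX)$. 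One concludes either by invoking the structural description of $\U/\Nil_n$ from \cite{HLS93} directly, or by inducting on $n$ and reducing to $n=1$, where the relevant statement follows from the $T$-functor computation in degree $0$ (and recovers the F-isomorphism theorem of Yagita and Totaro (\cite{Yagita02},\cite{Totaro14}) when $X=\pt$). Hence $\ker\phi,\coker\phi\in\Nil_n$, and with the locality of $\Eq$ this shows $\phi$ realizes the localization away from $\Nil_n$.

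The main obstacle I expect is the reorganization in the last paragraph: because $T_V$ lowers internal degree it does not literally preserve the degree $< n$ truncation, so one must check that $T_V(\Eq)$ nonetheless reassembles correctly in degrees $< n$, and that the resulting product --- naturally indexed by pairs consisting of a homomorphism $V\to G$ and an elementary abelian subgroup of its centralizer --- is identified with the product over the Quillen category of $G$. This is where the argument leans hardest on the precise shape and the functoriality of our $T$-functor computation, and on the structural theory of $\U/\Nil_n$ from \cite{HLS93}; passing from $H^*_G$ to $CH^*_GX$ is moreover not merely a matter of restricting to even degrees, since the Chow groups may be infinitely generated, so the finiteness and $\Nil_n$-locality inputs used in \cite{HLS93} must be re-established in the present setting.
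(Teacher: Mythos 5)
Your proposal captures the paper's overall strategy --- construct the comparison map, show the target is $\Nil_n$-closed, and show that applying $T_V$ yields an isomorphism in degrees less than $n$ --- and it correctly identifies the computation of $T_V(CH^*_GX)$ as the essential input. But the step you flag as ``the main obstacle I expect'' is exactly where the paper does all the real work, and your sketch does not close it. Saying one ``concludes either by invoking the structural description of $\U/\Nil_n$ from \cite{HLS93} directly, or by inducting on $n$ and reducing to $n=1$'' is not an argument; neither route is how the paper proceeds, nor is it clear either would work. What the paper actually does, after applying $T_V$ factor by factor, is reduce to the exactness of an explicit three-term sequence with products indexed by pairs $(E,\rho'\colon V\to E)$ and by pairs $(c_h\colon E_1\to E_2,\ \rho''\colon V\to E_1)$, and then verifies that exactness directly: injectivity of the first map via a retraction coming from the inclusion $C_G(\rho)\hookrightarrow \im(\rho)\times C_G(\rho)$, and $\im(\varphi)\supseteq\ker(\varphi_1-\varphi_2)$ by expanding an element of the middle product as a sum of simple tensors $y\otimes z$ and showing the $z$-coordinates reassemble into a preimage under $\varphi$. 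There is also a nontrivial preliminary (the paper's Lemma \ref{lem:induces-identity}) establishing that the inclusion/conjugation maps are compatible, which your construction of $\phi$ silently takes for granted.

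Two further assertions are incorrect as stated. You claim ``every product occurring is finite'' because $G$ has finitely many conjugacy classes of elementary abelian $p$-subgroups; but the products in the theorem are indexed by \emph{all} elementary abelian subgroups $E$ (and all morphisms $E_1\to E_2$ in the Quillen category), not conjugacy classes, and for positive-dimensional $G$ this index set is infinite, so finiteness cannot be the justification for commuting $T_V$ past the products. And to justify $\Nil_n$-locality of each block $CH^*_E\otimes CH^{<n}_{C_G(E)}X^E$ you invoke closure of $\Nil_n$-local objects under tensor products, which is not a general fact; the paper instead proves the specific statement (Lemma \ref{lem:basic-niln-closed}(c)) that $CH^*_V\otimes N$ is $\Nil_n$-closed when $N$ is concentrated in degrees at most $n-1$, by reducing to the injectives $CH^*_V\otimes\tilde{\mO}J(2k)$.
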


The case when $n = 1$ and $X = \Spec(k)$ generalizes and recovers the results of Yagita and Totaro.  In particular, $G$ may have positive dimension in the following corollary.

\begin{cor*}[see Corollary \ref{cor:quillens-theorem}]
The map
$$CH^*_G \rightarrow \lim_{E\subseteq G} CH^*_E$$
is an F-isomorphism.  Here the limit ranges over all elementary abelian subgroups $E$ of $G$, with maps given by inclusion and conjugation.
\end{cor*}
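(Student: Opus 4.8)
The plan is to specialize Theorem \ref{thm:localization-away-from-nil-p} to $n = 1$ and $X = \Spec(k)$ and then extract the F-isomorphism statement from the conclusion that the resulting map realizes localization away from $\Nil_1$. With $X = \Spec(k)$ we have $X^E = \Spec(k)$ for every $E$, so $CH^{<1}_{C_G(E)}\Spec(k) = CH^0_{C_G(E)}\Spec(k) = \F_p$ and likewise $(CH^*_{E_1} \otimes CH^*_{C_G(E_2)}\Spec(k))^{<1} = \F_p$. Hence the equalizer in Theorem \ref{thm:localization-away-from-nil-p} collapses to
$$\Eq\bigl(\textstyle\prod_{E} CH^*_E \rightrightarrows \prod_{E_1 \to E_2} CH^*_{E_1}\bigr),$$
and one checks that the two parallel maps are the source projections $(x_E) \mapsto (x_{E_1})$ and the restriction-conjugation maps $(x_E) \mapsto (\mathrm{res}^{E_2}_{E_1} x_{E_2})$; this is precisely the standard two-term presentation of the inverse limit $\lim_{E \subseteq G} CH^*_E$ over the category of elementary abelian subgroups with inclusion and conjugation maps. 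Thus Theorem \ref{thm:localization-away-from-nil-p} identifies $\lim_{E \subseteq G} CH^*_E$ with the localization $L_1(CH^*_G)$ in such a way that the map in the corollary becomes the localization unit $\eta \colon CH^*_G \to L_1(CH^*_G)$.

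It remains to show that $\eta$ is an F-isomorphism. By the defining property of localization away from the localizing subcategory $\Nil_1$, the kernel and the cokernel of $\eta$, regarded as unstable modules, lie in $\Nil_1$. I would combine this with the standard characterization recalled in Section \ref{sec:unstable_modules}: an unstable module $M$ concentrated in even degrees lies in $\Nil_1$ if and only if the operation $\lambda$ that on an unstable algebra is the $p$th power map $x \mapsto x^p$ on even-degree classes --- namely $P^{d}$ in cohomological degree $2d$, which is $\Sq^{2d}$ when $p = 2$ --- is locally nilpotent on $M$; in particular, for such an $M$ in $\Nil_1$ every element is killed by some iterate of $\lambda$.

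Applying this to the kernel $I = \ker\eta$: it is an unstable submodule of $CH^*_G$ in $\Nil_1$, and since $CH^*_G$ is concentrated in even degrees, each $x \in I$ satisfies $\lambda^k(x) = 0$ for some $k$; as $\lambda^k$ on an unstable algebra is $x \mapsto x^{p^k}$, this says $x$ is nilpotent, so every element of $\ker\eta$ is nilpotent. Applying it to the cokernel $Q = \coker\eta$: for $y \in \lim_{E} CH^*_E = L_1(CH^*_G)$, its image $\bar y \in Q$ satisfies $\lambda^k(\bar y) = 0$ for some $k$; since the projection $L_1(CH^*_G) \to Q$ is $\A$-linear it commutes with $\lambda$, and $\lambda^k(y) = y^{p^k}$ in the unstable algebra $L_1(CH^*_G)$, so $y^{p^k} \in \im\eta$. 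Hence some $p$-power of every element of $\lim_E CH^*_E$ lies in the image, and $\eta$ is an F-isomorphism.

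The mathematical substance is entirely contained in Theorem \ref{thm:localization-away-from-nil-p}; this corollary is essentially formal. The two points that require a little care are the identification of the $n = 1$ equalizer with the inverse limit (matching the two structure maps against the inclusion and conjugation maps of the Quillen category) and the translation of ``kernel and cokernel in $\Nil_1$'' into the two halves of the F-isomorphism condition via local nilpotence of the $p$th-power operation --- which is exactly the step where one uses that Chow rings are concentrated in even degrees.
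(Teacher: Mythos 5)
Your proposal is correct and takes essentially the same approach as the paper: the paper's proof is a one-line citation of Theorem~\ref{thm:localization-away-from-nil-p}, Lemma~\ref{lem:n-prime-nilpotents}, and the fact that both sides are unstable $\A$-algebras, and you have simply spelled out in full what each of these ingredients contributes --- the $n=1$, $X=\Spec(k)$ specialization collapsing the equalizer to the Quillen-category limit, and the translation of ``kernel and cokernel lie in $\Nil_1$'' into the two halves of the F-isomorphism via local nilpotence of $P_0$ (resp.\ $\Sq_0$), which on an unstable algebra concentrated in even degrees is the $p$th-power map. No gaps.
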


Another consequence of Theorem \ref{thm:localization-away-from-nil-p} is Proposition \ref{prop:totaro-conjecture}, which positively answers a conjecture of Totaro \cite[Conjecture 12.8]{Totaro14}.  The conjecture describes, when $X = \Spec(k)$, the smallest integer $n$ for which the map of Theorem \ref{thm:localization-away-from-nil-p} is injective in terms of the structure of $CH^*_G$ as a module over $\A$.

A key step to proving Theorem \ref{thm:localization-away-from-nil-p} is to compute $T_V(CH^*_GX)$, where $V \in \E$, and $T_V$ is Lannes's functor $T_V^{\top}$, restricted to the subcategory of $\U^{\top}$ of unstable $\A$-modules concentrated in even degrees.

\begin{thm*}[see Theorem \ref{thm:l_{G,X}}]
Let $G$ act on a smooth scheme $X$.  Then
$$T_V(CH^*_GX) \cong \prod_{[\rho] \in \Rep(V,G)} CH^*_{C_G(\rho)}X^\rho,$$
where $\Rep(V,G)$ is the set of conjugacy classes of group homomorphisms $V \rightarrow G$.
\end{thm*}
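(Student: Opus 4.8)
The plan is to construct the comparison map $l_{G,X}$ by adjunction and then show it is an isomorphism through a chain of reductions on $G$, ending with the case where $G$ is elementary abelian. To build $l_{G,X}$: since $V$ is finite with order invertible in $k$, every homomorphism $\rho\colon V\to G$ is infinitesimally rigid (as $H^1(V,-)$ vanishes), so $\Rep(V,G)$ is finite; fix representatives $\rho$. For each $\rho$ the subgroup $C_G(\rho)$ commutes with $\rho(V)$, giving a homomorphism $V\times C_G(\rho)\to G$, and $X^\rho$ is a smooth closed $C_G(\rho)$-stable subscheme of $X$ on which $V$ acts trivially. Pulling back along $V\times C_G(\rho)\to G$ and then restricting along the closed immersion of smooth schemes $X^\rho\hookrightarrow X$ (which is a ring homomorphism, both schemes being smooth) gives $CH^*_GX\to CH^*_{V\times C_G(\rho)}X^\rho\cong CH^*_V\otimes CH^*_{C_G(\rho)}X^\rho$, the last isomorphism being the Künneth formula for $CH^*_V$. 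As $\Rep(V,G)$ is finite these assemble into $CH^*_GX\to CH^*_V\otimes\prod_\rho CH^*_{C_G(\rho)}X^\rho$, and $l_{G,X}$ is its adjoint; it is natural in $(G,X)$.

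Next I would recall from the earlier sections that $T_V$ is exact, additive and symmetric monoidal, hence commutes with finite products, finite limits and arbitrary tensor products, and then make three reductions on $G$. First, choosing a closed embedding $G\hookrightarrow GL_n$, one has $CH^*_GX\cong CH^*_{GL_n}(GL_n\times_GX)$; computing the $GL_n$-fixed loci of $GL_n\times_GX$ exhibits $\Rep(V,G)$ as the disjoint union over $[\sigma]\in\Rep(V,GL_n)$ of its fibres and matches $\prod_{[\sigma]}CH^*_{C_{GL_n}(\sigma)}(GL_n\times_GX)^\sigma$ with $\prod_{[\rho]}CH^*_{C_G(\rho)}X^\rho$, so by naturality of $l$ it suffices to prove the theorem for all smooth $GL_n$-schemes. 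Second, for $GL_n$ with maximal torus $T$ and Weyl group $W$, the affine paving of $GL_n/B$ gives $CH^*_{GL_n}Y\cong(CH^*_TY)^W$ as unstable algebras; since $T_V$ commutes with $W$-invariants and, on the fixed-point side, $\Hom(V,T)/W\cong\Rep(V,GL_n)$ with the $C_{GL_n}(\tau)$-equivariant Chow rings recovered as Weyl invariants of the $T$-equivariant ones, the case $G=GL_n$ follows from the case $G=T$. Third, for a split torus $T$ let $V_0\leq T$ be its $p$-torsion subgroup, elementary abelian since $\mu_p\subseteq k$. Because $T/V_0\cong T$, the map $(Y\times U)/V_0\to(Y\times U)/T$ is an iterated $\mathbb{G}_m$-torsor whose associated line bundles have first Chern classes divisible by $p$, so the restriction $CH^*_TY\to CH^*_{V_0}Y$ is an isomorphism of unstable algebras; every $\rho\colon V\to T$ factors through $V_0$ and $C_T(\rho)=T$, $C_{V_0}(\rho)=V_0$, so the same fact applied to each $Y^\rho$ matches the fixed-point sides. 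This reduces the theorem to the case $G=V_0$ elementary abelian.

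The base case, which is the heart of the argument, asks that $T_V(CH^*_{V_0}Y)\cong\prod_{\rho\colon V\to V_0}CH^*_{V_0}(Y^\rho)$ for $V_0$ elementary abelian acting on a smooth scheme $Y$. By the classical computation of $T_V$ in $\U$, $T_V(CH^*_{V_0})\cong\prod_{f\in\Hom(V,V_0)}CH^*_{V_0}$ \emph{as a product ring}, so $T_V(CH^*_{V_0}Y)$, being a module over it, splits via the orthogonal idempotents as $\prod_fM_f$; since $l_{V_0,Y}$ is a ring homomorphism and linear over $T_V(CH^*_{V_0})$, it splits accordingly as $\prod_fl_f$ with $l_f\colon M_f\to CH^*_{V_0}(Y^{\im f})$, and it suffices to prove that each $l_f$ is an isomorphism — i.e.\ that $T_V$, restricted to the $f$-idempotent summand, localizes the equivariant Chow ring onto the fixed locus of $\im f$.

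I would prove this by dévissage on $Y$ along the stratification by $V_0$-stabilizer (each stabilizer being a direct summand of $V_0$, since $V_0$ is elementary abelian), running the localization long exact sequences in equivariant Chow theory through the exactness of $T_V$; the induction bottoms out on strata where $V_0$ acts with trivial stabilizers, where the statement to be proved is the algebraic analogue of Miller's theorem, namely $T_V(CH^*W)=CH^*W$ for any finite-type smooth scheme $W$ (equivalently $T_V(CH^*_{V_0}Z)\cong CH^*(Z/V_0)$ when $V_0$ acts freely). The hard part will be exactly this: establishing the Miller-type vanishing in the Chow setting, and organizing the stratification so that the resulting localization sequences assemble compatibly with the idempotent decomposition of $l_{V_0,Y}$. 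Everything outside the base case is formal manipulation of $T_V$ together with the two geometric identifications $CH^*_GX\cong CH^*_{GL_n}(GL_n\times_GX)$ and $CH^*_TY\cong CH^*_{V_0}Y$.
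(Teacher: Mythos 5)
Your construction of $\ell_{G,X}$ by adjunction, your identification of the ``Miller-type'' statement as the fact that $T_V$ fixes bounded-above unstable modules, and your dévissage of the elementary abelian case along the stabilizer stratification all line up with the paper (Lemmas \ref{lem:l_{S,*}}--\ref{lem:l_{S,X}} and Lemma \ref{lem:tv-for-bounded-above}). The trouble lies in your middle reduction, from $GL_n$ to its maximal torus $T$.

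You invoke the isomorphism $CH^*_{GL_n}Y\cong(CH^*_TY)^W$ with $\F_p$ coefficients, but this is \emph{false} whenever $p\le n$. A concrete counterexample: take $Y=GL_2$ with the left translation action and $p=2$. Then $CH^*_{GL_2}GL_2\cong CH^*(\Spec k)=\F_2$, while the flag-bundle formula (exactly as in the proof of Lemma \ref{lem:chow-ring-of-x-times-gln-mod-s}) gives $CH^*_TGL_2\cong CH^*_T\otimes_{CH^*_{GL_2}}\F_2\cong\F_2[y_1,y_2]/(y_1+y_2,\,y_1y_2)\cong\F_2[y]/(y^2)$, on which $W=S_2$ acts by swapping $y_1,y_2$, i.e.\ trivially in characteristic $2$; so $(CH^*_TGL_2)^W$ is two-dimensional, not one. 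The map $CH^*_{GL_n}Y\to CH^*_TY$ is indeed a split injection of modules by the flag-bundle formula, and $T_V$ does commute with $W$-invariants since they are a finite limit, but the cokernel is not a $W$-acyclic module mod $p$, so the invariants do not recover $CH^*_{GL_n}Y$. Taking invariants would work rationally or when $p>n$, but the theorem is claimed for all primes $p$. The same problem recurs on the fixed-locus side, where you want $CH^*_{C_{GL_n}(\tau)}Y^\tau$ to be Weyl invariants of $CH^*_TY^\tau$.

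The paper circumvents this exactly where your argument breaks: rather than taking $W$-invariants of a free module over $CH^*_{GL_n}$, it exploits that $CH^*_{GL_n}\to CH^*_S$ (with $S\subset T$ the diagonal subgroup of order dividing $p$) is \emph{faithfully flat}, and replaces the invariants by the Amitsur/descent equalizer for the coequalizer $X\times GL_n/S\times GL_n/S\rightrightarrows X\times GL_n/S\to X$ (Lemma \ref{lem:quillen-exact-sequence}). Faithfully flat descent produces the left-exact sequence unconditionally, with no hypothesis on $p$, after which exactness of $T_V$ carries the isomorphism from $X\times GL_n/S$ down to $X$. Your $T\to V_0$ step survives (every $\rho\colon V\to T$ factors through $T[p]$, and killing the $p$-th-power line bundles gives $CH^*_TY\cong CH^*_{V_0}Y$), but if you want to follow your overall plan you should replace the $W$-invariants step by the descent argument of Theorem \ref{thm:l_{G,X}}'s proof — and at that point you have essentially rederived the paper's route.
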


For $M \in \U^{\top}$, the integers $d_0^{\top}(M)$ and $d_1^{\top}(M)$ are the smallest nonnegative integers such that the localization maps $\lambda_n^{\top}$ applied to $M$ are injective for $n > d_0^{\top}(M)$ and isomorphisms for $n > d_1^{\top}(M)$.  Henn, Lannes, and Schwartz gave bounds for $d_0^{\top}(H^*_G)$ and $d_1^{\top}(H^*_G)$ in terms of the representation theory of $G$ \cite{HLS93}.  Work of Kuhn and Totaro has resulted in improved bounds for $d_0^{\top}(H^*_G)$ and the analogous invariant $d_0(CH^*_G)$ when $G$ is finite (\cite{Kuhn13}, \cite[Theorem 12.4, Theorem 12.7]{Totaro14}).  We give bounds for $d_0(CH^*_GX)$ and $d_1(CH^*_GX)$ analogous to those of \cite{HLS93} when $G$ is a linear algebraic group over a field $k$ as above.

\begin{thm*}[see Theorem \ref{thm:localization-bounds}]
Suppose that $G$ has a faithful representation of degree $n$.  Then
\begin{enumerate}[a)]
\item $d_0(CH^*_GX) \le n^2 + \dim X - \dim G$ 
\item $d_1(CH^*_GX) \le 2n^2 + \dim X - \dim G$
\end{enumerate}
Suppose further that $G$ is finite.  Then
\begin{enumerate}[a)]
\setcounter{enumi}{2}
\item $d_0(CH^*_GX) \le n(n-1)/2 + \dim X$
\item $d_1(CH^*_GX) \le n(n-1) + \dim X$
\end{enumerate}
\end{thm*}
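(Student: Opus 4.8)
The plan is to follow the method of Henn--Lannes--Schwartz \cite{HLS93}, with Theorem~\ref{thm:l_{G,X}} and Theorem~\ref{thm:localization-away-from-nil-p} supplying the two key inputs and the chosen faithful representation supplying the numerical data. The first move is to reduce parts (a) and (b) to a statement about $GL_n$: a faithful representation of degree $n$ gives an embedding $G \hookrightarrow GL_n$, and the induction isomorphism $CH^*_G X \cong CH^*_{GL_n}(GL_n \times^G X)$ identifies $CH^*_G X$ with the $GL_n$-equivariant Chow ring of the smooth scheme $Y := GL_n \times^G X$, of dimension $n^2 + \dim X - \dim G$. Moreover, by Theorem~\ref{thm:l_{G,X}} the centralizers that arise upon applying $T_V$ to $CH^*_{GL_n}(-)$ are again products of general linear groups, so parts (a) and (b) will follow from the uniform assertion: for every subgroup $H \subseteq GL_n$ that is a product of general linear groups, acting on a smooth scheme $Z$, one has $d_0(CH^*_H Z) \le \dim Z$ and $d_1(CH^*_H Z) \le \dim Z + n^2$.

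To prove this I would argue as in \cite[\S5--6]{HLS93}. The invariants $d_0$ and $d_1$ measure the failure of $\lambda_m \colon M \to L_m M$ to be injective, resp.\ an isomorphism, for $M = CH^*_H Z$, and by Theorem~\ref{thm:localization-away-from-nil-p} we may identify $L_m M$ with the displayed equalizer. When $H$ is trivial this equalizer degenerates to the truncation $CH^{<m}Z$, so $\lambda_m$ is simply the truncation map and is an isomorphism for $m > \dim Z$; this is the base case. For the inductive step one applies the exact functor $T_V$ --- which commutes with $L_m$ since $T_V(\Nil_m) \subseteq \Nil_m$ --- to $\lambda_m$ and uses Theorem~\ref{thm:l_{G,X}}: the $\rho$-components of $T_V M$ are the rings $CH^*_{C_H(\rho)}Z^\rho$, which for non-central $\rho$ are instances of the same problem with $\dim C_H(\rho) < \dim H$ and $\dim Z^\rho \le \dim Z$, handled by induction; the trivial $\rho$ recovers $M$, and central characters are accounted for separately. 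Combining the inductive bounds for the $\rho$-components with the naturality of $\lambda_m$ under the restriction maps $M \to CH^*_{E \times C_H(E)}Z^E$ appearing in the equalizer, one confines $\ker \lambda_m$ to degrees $\le \dim Z$; since every unstable module in $\Nil_m$ is concentrated in degrees $\ge m$, this gives $d_0(CH^*_H Z) \le \dim Z$. The $d_1$-bound requires in addition surjectivity of $\lambda_m$, which one deduces from the injectivity statement applied to the smaller groups together with a bound of $\dim GL_n = n^2$ on the degrees needed to regenerate $M$ from the image of $\lambda_m$; this is what produces the extra summand $n^2$.

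Parts (c) and (d) need a sharper input: for finite $G$ the scheme $Y = GL_n \times^G X$ has dimension $n^2 + \dim X$, so the argument above yields only $n^2 + \dim X$, not $n(n-1)/2 + \dim X$. Here, following Totaro \cite[\S12]{Totaro14}, I would exploit that a finite subgroup $G \subseteq GL_n$ acts on the \emph{proper} smooth flag variety $GL_n/B$ of dimension $n(n-1)/2$: the map $X \times (GL_n/B) \to X$ is $G$-equivariantly a tower of projective bundles, so pullback realizes $CH^*_G X$ as a sub-$\A$-module of $CH^*_G(X \times GL_n/B)$, and the Gysin pushforward along the proper map $X \times GL_n/B \to X$, combined with this pullback, exhibits $CH^*_G X$ as a retract after a degree shift of $n(n-1)/2$. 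Pushing the argument of the previous paragraph through this comparison --- tracking Steenrod operations across the pushforward by Riemann--Roch --- replaces $\dim GL_n = n^2$ by $\dim(GL_n/B) = n(n-1)/2$ throughout and yields (c) and (d).

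The main obstacle is the uniform degree bound on $\ker\lambda_m$ (equivalently, on the $\Nil_m$-torsion of $CH^*_H Z$) in the inductive step. In the topological case of \cite{HLS93} the corresponding bound is a consequence of $H^*_G$ being a Noetherian module over $H^*_{U(n)}$, but $CH^*_{GL_n}Y$ need not be finitely generated over $CH^*_{GL_n} = \F_p[c_1, \dots, c_n]$ --- indeed $CH^*BG$ can already fail to be finitely generated for a finite group --- so the bound has to be read off from the geometry, namely from $\dim Z$ together with the dimensions of the fixed loci $Z^\rho$ and centralizers $C_H(\rho)$, by iterating Theorem~\ref{thm:l_{G,X}}, and the bookkeeping for the central characters must be organized so that the induction actually closes. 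A secondary point requiring care is that several of the comparison maps used --- in particular the Gysin pushforwards above --- are not $\A$-linear; the reductions must therefore be phrased in terms of the $\A$-linear restriction maps, for which one has the monotonicity $d_0(A) \le d_0(B)$ whenever $A$ is a sub-$\A$-module of $B$, and $d_1(A) \le d_1(B)$ whenever $A$ is an $\A$-module direct summand of $B$.
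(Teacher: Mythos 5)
Your approach diverges from the paper's at both halves of the theorem, and in each case the divergence introduces a genuine gap.

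For parts (a) and (b), the paper does not pass through the induction isomorphism $CH^*_GX \cong CH^*_{GL_n}(GL_n\times^GX)$ and then try to prove a uniform bound over subgroups of $GL_n$. Instead it reduces \emph{directly} to the elementary abelian case: by Lemma~\ref{lem:quillen-exact-sequence} (faithfully flat descent along $CH^*_{GL(n)}\to CH^*_S$, with $S\subset GL(n)$ the diagonal elementary abelian subgroup), there is an exact sequence
$$0\to CH^*_GX\to CH^*_S(X\times_G GL(n))\to CH^*_{S\times S}((X\times GL(n)\times GL(n))/G),$$
whose two right-hand terms are $S$- and $(S\times S)$-equivariant Chow rings of smooth schemes of dimensions $n^2+\dim X-\dim G$ and $2n^2+\dim X-\dim G$. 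The bound $d_1(CH^*_SZ)\le\dim Z$ for elementary abelian $S$ (Lemma~\ref{lem:d1-bound-elementary-abelian-case}) is proved by an isotropy-stratification induction on the rank of the largest stabilizer, not by an induction over a class of subgroups of $GL_n$ --- so there is no issue with central elements. Your sketched induction on $\dim H$ for products of general linear groups hits exactly the obstruction you flag: central $\rho$ give $C_H(\rho)=H$ and no decrease in dimension, and ``accounted for separately'' is not a proof. Moreover, the proposed uniform assertion $d_0(CH^*_HZ)\le\dim Z$ and $d_1(CH^*_HZ)\le\dim Z+n^2$ for all such $H$ and $Z$ is an unproven, nonobvious strengthening rather than an inductive hypothesis supplied by the theorem being proved.

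For parts (c) and (d), the Gysin-pushforward argument along $X\times GL_n/B\to X$ cannot work in the form stated. As you note (and as the paper itself emphasizes in the proof of Lemma~\ref{lem:exact-sequences}), pushforward does not commute with Steenrod operations, so the pushforward--pullback composite does \emph{not} exhibit $CH^*_GX$ as an $\A$-module retract of $CH^*_G(X\times GL_n/B)$, and the monotonicity $d_1(A)\le d_1(B)$ for $\A$-module direct summands does not apply. The Riemann--Roch correction introduces a twist by the relative Bott/Todd class, and it is not clear this yields the desired degree bound for $d_1$ in $\U$. The paper's actual device for the extra $-n$ in the finite case is $\A$-linear throughout: it replaces $GL(n)$ by the $\AA^1$-equivalent $U\backslash GL(n)$, of dimension $n(n+1)/2$, on which the diagonal torus $T$ acts on the left with finite stabilizers commuting with the $S$-action; Lemma~\ref{lem:d1-bound-elementary-abelian-case} (which incorporates the commuting torus action via Lemma~\ref{lem:t-action-knockdown}) then produces the bound $\dim X+n(n+1)/2-n=\dim X+n(n-1)/2$ with no pushforwards involved.
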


Together, Theorems \ref{thm:localization-away-from-nil-p} and \ref{thm:localization-bounds} justify the slogan that to understand the Chow ring of (the classifying space of) a linear algebraic group $G$ with $n$-dimensional faithful representation, it suffices to understand the Chow rings of centralizers of its elementary abelian subgroups in degrees less than $2n^2 - \dim G$ (or $n(n-1)$ if $G$ is finite).

The paper is organized as follows.  Section \ref{sec:unstable_modules} gives background information on the theory of unstable modules over the Steenrod algebra, and Section \ref{sec:chow-rings-as-unstable-modules} discusses how Chow rings fit into this theory.  Section \ref{sec:Computation_of_T_VCH^*_GX} is devoted to proving Theorem \ref{thm:l_{G,X}}, which is then applied to study localization of $CH^*_G$ away from $n$-nilpotent modules in Section \ref{sec:localization}.  In Section \ref{sec:d0andd1}, we prove Totaro's conjecture and give bounds for $d_0(CH^*_GX)$ and $d_1(CH^*_GX)$.

\subsection*{Acknowledgements}
I thank Burt Totaro for his advice and encouragement.  I thank Peter Symonds, William Baker, Bar Roytman, and Alex Wertheim for helpful conversations.  I am grateful to the referee for their comments.  This work was supported by National Science Foundation grant DMS-1701237.

\section{Notation}
\label{sec:notation}
Let $p$ be a prime, and let $k$ be a field of characteristic not $p$ containing the $p$th roots of unity.  All schemes are assumed to be separated and of finite type over $k$.  Unless otherwise specified, $G$ will denote a linear algebraic group over $k$, $X$ will denote a smooth $G$-scheme over $k$, and $V$ will denote an elementary abelian $p$-group.

We will often define morphisms between group schemes and equivariant schemes by defining morphisms between their associated functors of points.  A reference for some of the basic constructions for algebraic groups from this point of view is \cite{Milne17}.

We will assume that $X$ is quasi-projective and has an equivariant ample line bundle.  These assumptions are included so that various quotients are guaranteed to exist as schemes by geometric invariant theory; however, they should not be necessary in order for the theorems in this paper to hold.  Though not always a scheme, the quotient of a smooth scheme by a free group action will always exist as a separated algebraic space \cite{KM97}.  It is folklore (see also \cite[Section 6.1]{EG98}, \cite[\href{https://stacks.math.columbia.edu/tag/0EDQ}{Chapter 0EDQ}]{SP}) that enough of the theory of Chow rings of schemes extends in order for the facts used in this paper to hold for separated algebraic spaces.  Note that if a linear algebraic group $G$ acts on a smooth quasi-projective scheme $X$, $X$ has an equivariant ample line bundle if $G$ is finite (\cite[Proposition 3.4.5]{Brion18}) or if $G$ is connected (\cite{Sumihiro74}).

In an abuse of notation, all Chow rings in this paper will be considered modulo $p$ (so that $CH^*X = CH^*X \otimes_\Z \F_p$, where the right hand side uses the usual notation for integral Chow groups).

\section{Unstable modules over the Steenrod algebra}
\label{sec:unstable_modules}
The mod $p$ Steenrod algebra $\A$ is the graded $\F_p$-algebra of natural transformations $H^*(-,\F_p) \Rightarrow H^{* + k}(-,\F_p)$ that commute with the suspension isomorphism $H^*(-,\F_p) \cong H^{*+1}(\Sigma -,\F_p)$.  If $p$ is odd, $\A$ is generated by elements $\beta$ of degree 1 and $P^i$ of degree $2i(p-1)$, $i > 0$, subject to the \emph{Adem relations}
\begin{align*}
 \beta^2 &= 0,\\
 P^aP^b &= \displaystyle\sum_{k = 0}^{\lfloor a/p\rfloor}(-1)^{a+k}\binom{(p-1)(b-k) - 1}{a-pk}P^{a+b-k}P^k, \text{ and } \\
 P^a\beta P^b &= \displaystyle\sum_{k = 0}^{\lfloor a/p \rfloor}(-1)^{a+k}\binom{(p-1)(b-k)}{a-pk} \beta P^{a+b-k}P^k\\
 &\;\;\;\;+ \displaystyle\sum_{k = 0}^{\lfloor(a-1)/p\rfloor}(-1)^{a+k-1}\binom{(p-1)(b-k) - 1}{a-pk-1}P^{a+b-k}\beta P^k
\end{align*}
for all $a < pb$.  If $p = 2$, $\A$ is generated by elements $\Sq^a$ of degree $a$, $a > 0$, subject to the relations
$$\Sq^a\Sq^b = \displaystyle\sum_{k = 0}^{\lfloor a/2\rfloor}\binom{b-k-1}{a-2k}\Sq^{a+b-k}\Sq^k$$
for $a < 2b$.

Steenrod operations satisfy a certain vanishing property: for $p > 2$, if $X$ is a space, $x \in H^*(X,\F_p)$, and $e \in \{0,1\}$, then $\beta^e P^ax = 0$ when $e + 2a > |x|$.  For $p = 2$, $\Sq^ax = 0$ when $a > |x|$.  A graded $\A$-module satisfying the above vanishing property is called an \emph{unstable module over the Steenrod algebra}.  Let $\U^{\top}$ denote the full subcategory of graded $\A$-modules comprising the unstable $\A$-modules.

The cohomology of a space $X$ is an $\F_p$-algebra, and for $x,y \in H^*(X,\F_p)$, the Steenrod operations satisfy the \emph{Cartan formula}
\begin{align*}
  \beta(xy) &= \beta(x)y + (-1)^{|x|}x\beta(y)\\
  P^a(xy) &= \sum_{i = 0}^{a} P^ixP^{a - i}y
\end{align*}
for $p > 2$, and
$$\Sq^a(xy) = \sum_{i = 0}^{a} \Sq^ix\Sq^{a - i}y$$
for $p = 2$.  Finally, we have $P^{|x|/2}x = x^p$ for all $x \in H^{2i}(X,\F_p)$ when $p > 2$ and $\Sq^{|x|}x = x^2$ for all $x \in H^*(X,\F_p)$ when $p = 2$.  An \emph{unstable $\A$-algebra} is an unstable $\A$-module that is also an $\F_p$-algebra satisfying the two above properties.  Let $\K^{\top}$ denote the category of unstable $\A$-algebras.

For an elementary abelian $p$-group $V \cong (\Z/p)^k$, we have 
$$H^*_V \cong \begin{cases}
\F_p[x_1,\ldots,x_k] & \text{if } p = 2 \text{, and}\\
\F_p\langle x_1,\ldots,x_k,y_1,\ldots,y_k\rangle &\text{if } p \text{ is odd.}
\end{cases}$$
where $|x_i| = 1$ and $|y_i| = 2$.  The angle bracket notation means the free graded-commutative algebra on the generators $x_i$ and $y_i$.  The functor $- \otimes H^*_V \colon \U^{\top} \rightarrow \U^{\top}$ has a left adjoint $T_V^{\top}$ that plays a key role in this paper.  

Let $M$ be an unstable $\A$-module.  For $x \in M$, let $\Sq_a x = \Sq^{|x|-a}x$ if $p = 2$.  If $p > 2$ and $|x| = 2k + e$, $e \in \{0,1\}$, then let $P_ax = P^{k-\lfloor a/2\rfloor}x$.  Our interest in $T_V^{\top}$ is motivated by the following proposition. 

\begin{prop}\cite[Proposition 6.1.1]{Schwartz94}\label{prop:n-nilpotents}
Let $n\ge 0$ be an integer.  The following are equivalent.
\begin{enumerate}[a)]
\item $M$ is a colimit of unstable $\A$-modules $M_i$, where each $M_i$ has a finite filtration whose quotients are $n$th suspensions of unstable $\A$-modules.
\item The operations $\Sq_a$ (for $p = 2$) or $P_a$ (for $p > 2$) are locally nilpotent for all $0 \le a < n$.
\item $\Hom_{\U^{\top}}(M,H^*_V \otimes J(k)) = 0$ for all $V$ and for all $0 \le k < n$. 
\item $\Hom_{\U^{\top}}(M,H^*_V \otimes N) = 0$ for all $V$ and for all $N\in \U^{\top}$ concentrated in degrees at most $n - 1$.
\item $(T_V^{\top}M)^i = 0$ for all $V$ and for all $i < n$.
\end{enumerate}
\end{prop}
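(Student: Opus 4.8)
This is a known result \cite[Proposition 6.1.1]{Schwartz94}; here is how I would organize the argument. The plan is to establish the formal equivalences (c) $\Leftrightarrow$ (d) $\Leftrightarrow$ (e) first, then (a) $\Rightarrow$ (e) using only soft properties of $T_V^{\top}$, and finally to close the cycle through (e) $\Rightarrow$ (b) $\Rightarrow$ (a).

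For (c) $\Leftrightarrow$ (d) $\Leftrightarrow$ (e) I would use the adjunction $\Hom_{\U^{\top}}(T_V^{\top}M, N) \cong \Hom_{\U^{\top}}(M, N \otimes H^*_V)$ together with three standard facts about the Brown--Gitler modules: $J(k)$ is injective in $\U^{\top}$; $\Hom_{\U^{\top}}(N, J(k)) \cong (N^k)^*$; and every unstable module concentrated in degrees at most $n-1$ embeds into a product of copies of the $J(k)$ with $0 \le k < n$, which are themselves concentrated in such degrees. The adjunction gives a natural isomorphism
\[
\Hom_{\U^{\top}}(M, H^*_V \otimes J(k)) \;\cong\; \Hom_{\U^{\top}}(T_V^{\top}M, J(k)) \;\cong\; \big((T_V^{\top}M)^k\big)^*,
\]
whose vanishing for all $V$ and all $k < n$ is exactly condition (e); this yields (c) $\Leftrightarrow$ (e). Condition (c) is a special case of (d); and to deduce (d) from (c), embed a module $N$ concentrated in degrees $\le n-1$ into a product of $J(k)$'s with $k < n$, apply the exact functor $- \otimes H^*_V$ (which commutes degreewise with the relevant products, as $H^*_V$ is finite-dimensional in each degree), and then apply the left-exact, product-preserving functor $\Hom_{\U^{\top}}(M, -)$.

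For (a) $\Rightarrow$ (e) I would use that $T_V^{\top}$ is exact (Lannes's theorem), commutes with colimits (being a left adjoint), and commutes with the suspension $\Sigma$; in particular it carries an $n$th suspension to a module concentrated in degrees $\ge n$. By exactness the same is true of any module with a finite filtration by $n$th suspensions, and by colimit-commutation of any colimit of such modules, so $(T_V^{\top}M)^i = 0$ for $i < n$.

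The two remaining implications carry the real content. For (e) $\Rightarrow$ (b) it suffices to test with $V = \Z/p$: the point is that the operations $\Sq_a$ (for $p = 2$), resp. $P_a$ (for $p > 2$), with $0 \le a < n$ are precisely what the low-degree part of $T_{\Z/p}^{\top}M$ sees, so that using the explicit computation of $T_{\Z/p}^{\top}$ in low degrees, an element $x$ with $\Sq_a^{j}x \ne 0$ (resp. $P_a^{j}x \ne 0$) for all $j$ would assemble into a nonzero class in $(T_{\Z/p}^{\top}M)^{a}$ with $a < n$, contradicting (e). The main obstacle is (b) $\Rightarrow$ (a). I would first reduce to the case that $M$ is finitely generated --- the class in (a) is closed under filtered colimits, and (b) is inherited by submodules --- and then exhibit a finite filtration of $M$ whose successive subquotients lie in the image of $\Sigma^n$, i.e. are modules on which all $\Sq_a$ (resp. $P_a$) with $a < n$ act trivially, using the nilpotent filtration of $M$ and using finite generation to promote the local nilpotence in (b) to a uniform bound that forces the filtration to terminate. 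Making this rigorous --- keeping track of how the operations $\Sq_a$ (resp. $P_a$) interact with the full $\A$-action so that the filtration is genuinely finite --- is the delicate point of the whole proposition.
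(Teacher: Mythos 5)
The paper does not prove this proposition; it cites it verbatim from \cite[Proposition~6.1.1]{Schwartz94}, so there is no proof in the paper to compare against. Given that, here is an assessment of your sketch on its own terms.

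The organization into (c)$\Leftrightarrow$(d)$\Leftrightarrow$(e), then (a)$\Rightarrow$(e), then closing the cycle through (e)$\Rightarrow$(b)$\Rightarrow$(a) is sound, and the formal implications are handled correctly. The adjunction $\Hom_{\U^{\top}}(M, H^*_V \otimes J(k)) \cong ((T_V^{\top}M)^k)^*$ gives (c)$\Leftrightarrow$(e) immediately; (d)$\Rightarrow$(c) is specialization; and your argument for (c)$\Rightarrow$(d) is correct --- an unstable module bounded above by degree $n-1$ does embed in a product of $J(k)$ with $k<n$, and $-\otimes H^*_V$ commutes with that product degreewise because the direct sum defining each graded piece of a tensor product is finite and each $(H^*_V)^a$ is finite-dimensional. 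The (a)$\Rightarrow$(e) step from exactness, colimit-preservation, and $T_V^{\top}\Sigma \cong \Sigma T_V^{\top}$ is also clean.

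The two hard implications are where your sketch is thinnest, as you note yourself. For (e)$\Rightarrow$(b), the reduction to $V=\Z/p$ is fine, but ``the explicit computation of $T_{\Z/p}^{\top}$ in low degrees'' is doing the heavy lifting; what one actually needs is the identification of $\Hom_{\U^{\top}}(M, H^*_{\Z/p}\otimes J(a))$ in terms of ``sequences compatible under $\Sq_a$'' (resp.\ $P_a$), which in Schwartz's treatment comes from the structure of $H^*_{\Z/p}\otimes J(a)$ as an injective and the explicit formula for the Steenrod action on $H^*_{\Z/p}$. Sketching this as ``assembling into a class'' is morally right but elides the argument that makes it work, namely the injectivity of $H^*_{\Z/p}\otimes J(a)$, which lets one extend a nonzero map off the cyclic submodule generated by an $x$ with $\Sq_a^j x \neq 0$ for all $j$. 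For (b)$\Rightarrow$(a), the reduction to finitely generated $M$ is legitimate (every module is the filtered colimit of its finitely generated submodules, (b) passes to submodules, and (a) is closed under filtered colimits), but the claim that a finitely generated module satisfying (b) admits a \emph{finite} filtration with $n$th-suspension quotients is genuinely the crux, and ``promote local nilpotence to a uniform bound'' does not by itself show the resulting filtration terminates. In Schwartz's proof this rests on the structure of the nilpotent filtration --- in particular that the $s$th layer $\mathrm{nil}_s M / \mathrm{nil}_{s+1} M$ is the $s$th suspension of a \emph{reduced} module, and on the interaction between finite generation, the Frobenius map $\Phi M \to M$, and the instability condition --- none of which appears in the sketch. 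So your proposal correctly identifies where the content lies but does not yet supply it; the hard half would need the structure theory of the nilpotent filtration to be made rigorous.
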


Here $J(k)$ denotes the $k$th \emph{Brown-Gitler module}, which represents the functor from $\U^{\top}$ to $\F_p$-vector spaces given by $M \mapsto (M^k)^\vee$.  We call an unstable $\A$-module satisfying the conditions in Proposition \ref{prop:n-nilpotents} \emph{$n$-nilpotent}, and $\Nil_n^{\top}\subseteq \U^{\top}$ denotes the full subcategory of $n$-nilpotent unstable $\A$-modules.

The functor $T_V^{\top}$ has surprisingly good properties.  Lannes showed: 

\begin{thm}\cite[Th\'eor\'eme 3.1]{Lannes85}
The functor $T_V^{\top}$ is exact.
\end{thm}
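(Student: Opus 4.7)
The plan is to reduce exactness of $T_V^{\top}$ to an injectivity statement via the defining adjunction. Since $T_V^{\top}$ is a left adjoint to $-\otimes H^*_V$, it automatically preserves cokernels and is right exact, so the substantive content is that $T_V^{\top}$ preserves monomorphisms. I would pursue this by checking the preservation of monomorphisms against a cogenerating set of injectives and then transferring the problem across the adjunction.

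First I would note that the Brown--Gitler modules $\{J(k)\}_{k\ge 0}$ form a set of injective cogenerators of $\U^{\top}$. Each $J(k)$ is injective because $\Hom_{\U^{\top}}(-, J(k)) = ((-)^k)^\vee$ is exact, and the same description shows that a map $f\colon M \to N$ in $\U^{\top}$ is a monomorphism if and only if $\Hom_{\U^{\top}}(f, J(k))$ is surjective for every $k$. Consequently, given an injection $i\colon M \hookrightarrow N$, showing that $T_V^{\top}(i)$ is injective reduces to showing
$$\Hom_{\U^{\top}}(T_V^{\top} N, J(k)) \longrightarrow \Hom_{\U^{\top}}(T_V^{\top} M, J(k))$$
is surjective for every $k$. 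Applying the adjunction identifies this with
$$\Hom_{\U^{\top}}(N, H^*_V \otimes J(k)) \longrightarrow \Hom_{\U^{\top}}(M, H^*_V \otimes J(k)),$$
and surjectivity of the latter is immediate if $H^*_V \otimes J(k)$ is injective in $\U^{\top}$. Thus the entire theorem reduces to this injectivity claim.

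The main obstacle will be establishing injectivity of $H^*_V \otimes J(k)$, which is the geometric core of the problem; the case $k = 0$ already amounts to Miller's theorem that $H^*_V$ is injective in $\U^{\top}$. To prove Miller's theorem I would examine $\Ext^i_{\U^{\top}}(F(n), H^*_V)$ for the free unstable modules $F(n)$, exploiting that maps out of $F(n)$ into $H^*_V$ are controlled by the polynomial / restricted polynomial functors on $V$, and then bootstrap vanishing of higher $\Ext$ by a degree argument. To pass from injectivity of $H^*_V$ to injectivity of $H^*_V \otimes J(k)$, I would resolve $J(k)$ by unstable modules whose tensor product with $H^*_V$ remains manageable, using the isomorphism $H^*_V \otimes H^*_W \cong H^*_{V\oplus W}$ to keep the argument inside the class of modules whose injectivity is already known. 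The hard part throughout is Miller's injectivity theorem; once that is in hand the adjunction formalism does the rest.
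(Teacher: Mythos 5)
Your reduction is the standard and correct one: since $T_V^{\top}$ is a left adjoint it is automatically right exact, and preservation of monomorphisms can be tested against the Brown--Gitler modules $J(k)$, which form a set of injective cogenerators because $\Hom_{\U^{\top}}(-,J(k)) \cong ((-)^k)^\vee$. Passing across the adjunction correctly reduces everything to the Lannes--Zarati theorem that $H^*_V\otimes J(k)$ is injective in $\U^{\top}$, and you are right that this is the crux.

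The gap is in your sketch of that injectivity. Examining $\Ext^i_{\U^{\top}}(F(n),H^*_V)$ gives no information at all: the $F(n)$ are free, hence projective, so $\Ext^i_{\U^{\top}}(F(n),-)=0$ for $i>0$ for reasons that have nothing to do with $H^*_V$. Injectivity of $H^*_V$ requires vanishing of $\Ext^1_{\U^{\top}}(M,H^*_V)$ for \emph{arbitrary} unstable $M$, and this is a genuinely hard theorem (Carlsson, Miller, Lannes--Zarati); the known arguments run through the classification of reduced injectives and the $\bar\Phi$-structure, or through Lambda-algebra computations, not through $\Hom$ from free modules. Likewise, the idea of resolving $J(k)$ so as to exploit $H^*_V\otimes H^*_W\cong H^*_{V\oplus W}$ does not close the loop: for $k>0$ the $J(k)$ are not summands of any $H^*_W$, and the actual Lannes--Zarati argument relies on the Mahowald short exact sequences among Brown--Gitler modules together with a careful induction. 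So the adjunction formalism is sound, but the essential input---injectivity of $H^*_V\otimes J(k)$---is not established by the proposal, and one of the two routes offered toward it is a dead end.
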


Furthermore, the tensor product of two unstable $\A$-modules over $\F_p$ is again an unstable $\A$-module via the Cartan formula, and Lannes showed that $T_V^{\top}$ commutes with tensor products: 

\begin{lem}\cite[Th\'eor\'eme 3.2.1]{Lannes85}\label{lem:tv-commutes-with-tensor-products}
For $M, N \in \U^{\top}$, the maps $M \rightarrow T_V^{\top} M \otimes H^*_V$ and $N \rightarrow T_V^{\top} N \otimes H^*_V$ adjoint to the identity maps on $T_V^{\top} M$ and $T_V^{\top} N$ induce a map $M \otimes N \rightarrow T_V^{\top} M \otimes H^*_V \otimes T_V^{\top} N \otimes H^*_V$.  Composing with multiplication on $H^*_V$ gives a map adjoint to a natural isomorphism 
 $$T_V^{\top}(M \otimes N) \xrightarrow{\cong} T_V^{\top} M \otimes T_V^{\top} N$$
\end{lem}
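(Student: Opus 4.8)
I would prove Lemma~\ref{lem:tv-commutes-with-tensor-products} by identifying the map in the statement with the canonical oplax-monoidal comparison map for $T_V^{\top}$ and then checking it is an isomorphism by d\'evissage to free modules. First I would note that, since $H^*_V$ is a commutative algebra object of the symmetric monoidal category $(\U^{\top},\otimes,\F_p)$, the right adjoint $-\otimes H^*_V$ is lax monoidal --- its structure maps being induced by the multiplication and unit of $H^*_V$ --- so its left adjoint $T_V^{\top}$ carries a canonical oplax-monoidal structure. Unwinding the mate construction shows that the resulting natural map $T_V^{\top}(M\otimes N)\to T_V^{\top}M\otimes T_V^{\top}N$ is exactly the one assembled in the statement from the two adjunction units and the multiplication on $H^*_V$; denote it $\theta_{M,N}$. (In particular $T_V^{\top}\F_p\cong\F_p$, which also drops out of the adjunction since $(H^*_V)^0=\F_p$.)

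Next I would record that $T_V^{\top}$, being a left adjoint, is right exact and preserves all direct sums, and that $-\otimes-$ is right exact and preserves direct sums in each variable since we are working over the field $\F_p$. Hence both $(M,N)\mapsto T_V^{\top}(M\otimes N)$ and $(M,N)\mapsto T_V^{\top}M\otimes T_V^{\top}N$ are right exact and sum-preserving in each variable separately, and $\theta$, being natural, respects these direct sum decompositions.

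Now recall that the free unstable modules $F(n)$, $n\ge 0$ --- characterized by $\Hom_{\U^{\top}}(F(n),M)\cong M^n$ naturally in $M$ --- are projective generators of $\U^{\top}$: every unstable module admits a presentation $\bigoplus_{\beta}F(n_{\beta})\to\bigoplus_{\alpha}F(n_{\alpha})\to M\to 0$. Feeding such a presentation of $N$ into the two bifunctors above (with $M=F(n)$ fixed), then feeding in such a presentation of $M$, and passing to cokernels each time --- using right exactness in each variable and the compatibility of $\theta$ with direct sums --- reduces the lemma to the case $M=F(n)$, $N=F(m)$. For that case I would use the explicit shape of $T_V^{\top}$ on free modules: from the adjunction, $T_V^{\top}F(n)$ corepresents $P\mapsto(P\otimes H^*_V)^n=\bigoplus_{i+j=n}P^i\otimes(H^*_V)^j$, and since each $(H^*_V)^j$ is finite dimensional this yields a natural isomorphism $T_V^{\top}F(n)\cong\bigoplus_{j=0}^{n}F(n-j)\otimes\big((H^*_V)^j\big)^{\vee}$. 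One then writes out $T_V^{\top}(F(n)\otimes F(m))$ through the adjunction and the Cartan-formula action on $F(n)\otimes F(m)$, and matches it against $\theta_{F(n),F(m)}$, the crucial point being that the multiplication $H^*_V\otimes H^*_V\to H^*_V$ built into $\theta$ exactly implements the concatenation $(H^*_V)^j\otimes(H^*_V)^{j'}\to(H^*_V)^{j+j'}$ of the two index sums.

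The main obstacle is this last verification in the free case: because $F(n)\otimes F(m)$ is not itself free, one must carefully describe $T_V^{\top}(F(n)\otimes F(m))$ and track the Steenrod action through the adjunction, which is a finite but somewhat intricate bookkeeping. By contrast, everything preceding it --- the identification of $\theta$ as the oplax-monoidal structure map and the d\'evissage to free modules --- is formal, and in fact the reduction uses only that a left adjoint is right exact, not the full strength of Lannes's exactness theorem.
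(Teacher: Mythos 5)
The paper does not prove this lemma --- it is cited directly from Lannes \cite[Th\'eor\'eme 3.2.1]{Lannes85} --- so there is no in-paper argument to compare against. Evaluating your proposal on its own terms: the identification of the comparison map as the oplax-monoidal structure map induced by the lax-monoidal structure on $-\otimes H^*_V$ is correct, and the d\'evissage to free modules is formally sound (left adjoints preserve direct sums and cokernels, $\otimes_{\F_p}$ is exact in each variable, and naturality of $\theta$ is compatible with the direct sum decompositions). But the free case $M = F(n)$, $N = F(m)$ is where the entire structural content of the theorem sits, and your sketch does not actually resolve it.

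The difficulty is that $F(n)\otimes F(m)$ is not free, and not even monogenic: already for $p=2$, $n=m=1$, the element $\Sq^1\iota_1\otimes\iota_1$ is not in the $\A$-submodule generated by $\iota_1\otimes\iota_1$. So the adjunction identity $\Hom(T_V^{\top}(F(n)\otimes F(m)),P)\cong\Hom(F(n)\otimes F(m),P\otimes H^*_V)$ does not produce a closed formula in $P$ as it does for a single $F(n)$; you would need to describe $\Hom_{\U^{\top}}(F(n)\otimes F(m),-)$ via generators and Cartan relations and then match it against $\Hom(T_V^{\top}F(n)\otimes T_V^{\top}F(m),-)$, which is the actual hard computation rather than ``bookkeeping.'' It is also worth noting that Lannes's published route (see Schwartz's book) does not pass through free modules at all; it exploits the exactness of $T_V^{\top}$, the injectivity of the modules $H^*_W\otimes J(k)$, and explicit computations of $T_V^{\top}$ on those injectives. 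Your observation that the free-module d\'evissage would avoid invoking full exactness is attractive, but that is precisely why the free case carries more weight than the proposal acknowledges: you have shifted the difficulty from the exactness theorem onto an unproven explicit calculation.
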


Lemma \ref{lem:tv-commutes-with-tensor-products} allows for describing the left adjoint to $- \otimes H^*_V$ at the level of unstable algebras.

\begin{lem}\cite[Theorem 0.1]{Lannes92}
For $K \in \K^{\top}$, $T_V^{\top}K$ is naturally an unstable $\A$-algebra via the map
$$T_V^{\top}(K) \otimes T_V^{\top}(K) \cong T_V^{\top}(K \otimes K) \xrightarrow{T_V^{\top}(\mu)} T_V^{\top}(K),$$
where $\mu$ is the multiplication map $K \otimes K \rightarrow K$.  With this $\F_p$-algebra structure, $T_V^{\top}$ is left adjoint to the functor $- \otimes H^*_V \colon \K^{\top} \rightarrow \K^{\top}$.
\end{lem}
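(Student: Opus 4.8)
The plan is to prove the two assertions in turn: first that $T_V^{\top}K$, equipped with the displayed product, is an object of $\K^{\top}$, functorially in $K$; and then that the module-level adjunction $T_V^{\top} \dashv (- \otimes H^*_V)$ on $\U^{\top}$ restricts to $\K^{\top}$.

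For the first assertion I would begin by upgrading Lemma \ref{lem:tv-commutes-with-tensor-products} to the statement that $T_V^{\top} \colon \U^{\top} \to \U^{\top}$ is a symmetric monoidal functor. The natural isomorphism $T_V^{\top}(M \otimes N) \xrightarrow{\cong} T_V^{\top}M \otimes T_V^{\top}N$ is already in hand; what remains is to check it is compatible with the associativity, unit, and symmetry constraints (and that $T_V^{\top}\F_p \cong \F_p$). Each of these is a diagram chase: unwinding the construction of the isomorphism through the unit maps $\eta_{(-)} \colon M \to T_V^{\top}M \otimes H^*_V$ and the multiplication on $H^*_V$, the required coherences reduce to the corresponding coherences for $H^*_V$, which hold because $H^*_V$ is a commutative, associative, unital $\F_p$-algebra. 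Given this, $T_V^{\top}$ carries commutative unital monoid objects of $(\U^{\top}, \otimes)$ to commutative unital monoid objects, so $T_V^{\top}K$ with product $T_V^{\top}(\mu) \circ (\mathrm{iso})^{-1}$ is a commutative unital $\F_p$-algebra whose multiplication is a morphism in $\U^{\top}$; the latter is exactly the Cartan formula for $T_V^{\top}K$.

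What is not formal, and where I expect the main difficulty to lie, is the remaining axiom of an unstable $\A$-algebra: $P^{|x|/2}x = x^p$ for $x$ of even degree when $p$ is odd (together with the attendant condition on $\beta$), and $\Sq^{|x|}x = x^2$ when $p = 2$. Since the $p$th-power map is not $\A$-linear — indeed not even additive for $p > 2$ on classes of odd degree — one cannot simply apply $T_V^{\top}$ to it. The standard remedy is to linearize: using the Frobenius functor $\Phi \colon \U^{\top} \to \U^{\top}$ and the natural transformation $\lambda \colon \Phi \Rightarrow \id$, the restriction axiom for a commutative algebra $K$ in $\U^{\top}$ is equivalent to the commutativity of a square expressing $\lambda_K$ as the composite of (iterated) $\mu_K$ with a natural $\A$-linear map out of $\Phi K$. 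One then invokes three properties of $T_V^{\top}$: it is exact (Lannes's theorem quoted above), it commutes with $\otimes$ (Lemma \ref{lem:tv-commutes-with-tensor-products}) and hence with the relevant symmetric-power and coinvariant constructions, and it commutes with $\Phi$ and intertwines $\lambda$. Applying $T_V^{\top}$ to the square witnessing the axiom for $K$ then yields the corresponding square for $T_V^{\top}K$. An alternative is to reduce to the case of a free unstable $\A$-algebra $U(M)$: exactness of $T_V^{\top}$ lets one write $T_V^{\top}K$ as a reflexive coequalizer of the objects $T_V^{\top}U(M_i)$, and a direct computation from the presentation of $U(M)$ as a quotient of tensor powers gives $T_V^{\top}U(M) \cong U(T_V^{\top}M) \in \K^{\top}$; since the forgetful functor $\K^{\top} \to \U^{\top}$ creates reflexive coequalizers, $T_V^{\top}K \in \K^{\top}$. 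Either way, this step is the crux; the rest of the argument uses nothing about Chow groups.

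Finally, for the adjunction I would transport the bijection $\Hom_{\U^{\top}}(T_V^{\top}K, L) \cong \Hom_{\U^{\top}}(K, L \otimes H^*_V)$ to the level of algebra maps. The unit $\eta_K \colon K \to T_V^{\top}K \otimes H^*_V$ is a morphism in $\K^{\top}$ — immediate from the way the product on $T_V^{\top}K$ and the monoidal structure were defined, via a short diagram chase. Since $- \otimes \id_{H^*_V}$ sends algebra maps to algebra maps (it is the coproduct with $\id_{H^*_V}$ in $\K^{\top}$), the adjoint $g = (f \otimes \id) \circ \eta_K$ of an algebra map $f \colon T_V^{\top}K \to L$ is again an algebra map. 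Conversely, given an algebra map $g \colon K \to L \otimes H^*_V$ with $\U^{\top}$-adjoint $f \colon T_V^{\top}K \to L$, the two $\U^{\top}$-morphisms $f \circ \mu_{T_V^{\top}K}$ and $\mu_L \circ (f \otimes f)$ out of $T_V^{\top}K \otimes T_V^{\top}K \cong T_V^{\top}(K \otimes K)$ have $\U^{\top}$-adjoints $g \circ \mu_K$ and $\mu_{L \otimes H^*_V} \circ (g \otimes g)$ respectively — using naturality of the adjunction, the symmetric monoidality of $T_V^{\top}$, and the fact that $\mu_{L \otimes H^*_V}$ is the tensor-product multiplication — and these agree because $g$ is an algebra map; hence $f$ is multiplicative, and a parallel argument handles units. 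Thus the bijection restricts to one between $\Hom_{\K^{\top}}(T_V^{\top}K, L)$ and $\Hom_{\K^{\top}}(K, L \otimes H^*_V)$, naturally in $K$ and $L$, which is the desired adjunction.
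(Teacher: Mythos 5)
The paper does not prove this lemma: it is quoted verbatim from Lannes (\cite[Theorem 0.1]{Lannes92}), so there is no in-paper argument to compare against. Your outline matches the standard proof in the literature (Lannes's paper, or Schwartz's book \cite{Schwartz94}, Section 3.9): establish that $T_V^{\top}$ is symmetric monoidal so that it sends commutative monoid objects of $(\U^{\top},\otimes)$ to commutative monoid objects, deal with the restriction axiom separately, and then transport the adjunction. The adjunction-transport step at the end is correct as written; unwinding the adjoint of $\mu_L \circ (f\otimes f)$ through the definition of the monoidal isomorphism does recover $\mu_{L\otimes H^*_V}\circ(g\otimes g)$ as you claim.

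The one place your argument is not self-contained is precisely where you flag the difficulty: you list ``$T_V^{\top}$ commutes with $\Phi$ and intertwines $\lambda$'' as one of three properties to ``invoke,'' on the same footing as exactness and monoidality, but unlike those two it is neither proved nor referenced, and it cannot be extracted formally from the other two. (Exactness gives commutation with equalizers and hence with $\Sigma_p$-invariants of tensor powers, and monoidality handles the tensor powers themselves, but $\Phi M$ is a genuinely different construction --- a regrading with a twisted $\A$-action --- and the natural map $T_V^{\top}\Phi M \to \Phi T_V^{\top}M$, together with its compatibility with $\lambda$, takes real work to construct and verify. In Schwartz's book this occupies its own proposition and proof.) Your alternative route through $T_V^{\top}U(M)\cong U(T_V^{\top}M)$ has exactly the same gap in disguise, since $U(-)$ is built from $\Phi$ and $\lambda$. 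So the proposal correctly identifies the crux but does not discharge it; as a complete proof it would need that commutation lemma filled in, either by proving it or citing it explicitly.
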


\section{Chow rings as unstable modules over the Steenrod algebra}\label{sec:chow-rings-as-unstable-modules}
For this section, let $G$ be an affine group scheme of finite type over $k$.  The Chow ring of the classifying space of $G$ was defined by Morel-Voevodsky \cite{MV99} and Totaro (\cite{Totaro99}, \cite{Totaro14}), and Edidin-Graham \cite{EG98} extended the definition to equivariant Chow rings.

We say that $G$ acts freely on a scheme $Y$ if the map $G \times Y \rightarrow Y \times Y$, $(g,y) \mapsto (y,gy)$ is an isomorphism to a closed subscheme (this definition differs from others in the literature; cf.\ \cite[\href{https://stacks.math.columbia.edu/tag/07S1}{Tag 07S1}]{SP}).  For each $i$, fix a faithful representation $V_i$ of $G$ such that the open subset $U_i \subseteq V_i$ on which $G$ acts freely has complement with codimension more than $i$.  If $X$ is a smooth scheme with a $G$-action, the equivariant Chow ring of $X$ is defined by $CH^i_GX := CH^i(U_i \times_G X)$ (here the quotient $U_i \times_G X$ exists as a scheme by the assumptions of Section \ref{sec:notation}).  The definition does not depend on the choices of representations and $G$ and enjoys all of the usual functorial properties of Chow rings \cite[Sections 2.2 and 2.3]{EG98}.

\begin{rem}
We think of the $U_i$ and $U_i/G$ as finite-dimensional approximations to algebro-geometric analogs of the spaces $EG$ and $BG$ in topology.  Indeed, we can pick the $V_i$ so that we have inclusions $V_i \rightarrow V_{i+1}$.  Then the \emph{geometric classifying space} $B_{gm}G$ of $G$ and its universal principal $G$-bundle $E_{gm}G$ can be defined in the $\AA^1$-homotopy category $\mathcal{H}(k)$ as $\hocolim_{m}U_m/G$ and $\hocolim_{m}U_m$ \cite[Section 4.2]{MV99}.
\end{rem}  

We will frequently abuse notation and write $EG$, $BG$, and $EG \times_G X$ instead of the schemes $U_i$, $U_i/G$, and $U_i \times_G X$ approximating them.

Let $\U$ denote the full subcategory of $\U^{\top}$ containing the unstable modules concentrated in even degrees.  We will regrade the objects of $\U$ as follows.  For $M \in \U$, if $x \in M$ has degree $2i$ as an element of an unstable module in $\U^{\top}$, it will have degree $i$ as an element of an object of $\U$.

Brosnan constructed Steenrod operations on the mod $p$ Chow ring of a smooth algebraic space over $k$ \cite{Brosnan03}.  Independently, Voevodsky constructed Steenrod operations for motivic cohomology when $k$ is perfect \cite{Voevodsky03}, and Hoyois-Kelly-{\O}stv{\ae}r extended his definition to all fields \cite{HKO17}.  For $X$ a smooth scheme, we have operations
$$P^a \colon CH^iX \rightarrow CH^{i + a(p-1)}X$$
when $p$ is odd, and 
$$\Sq^{2a} \colon CH^iX \rightarrow CH^{i + a}X$$
when $p = 2$ (recall that all Chow groups in this paper are considered modulo $p$).  For Steenrod operations on Chow groups, the operations $\Sq^{2a}$ are sometimes also denoted $P^a$.

The motivic Adem relations --- which were given in \cite{Voevodsky03} with some mistakes that were corrected in \cite[Theorems 4.5.1 and 4.5.2]{Riou12} --- differ from the topological Adem relations when $p = 2$, but the relations agree after restricting to the subring of operations of even topological degree.  In particular, the motivic Adem relations agree with the topological ones after restricting to the Chow ring.  The operations $P^a$ satisfy the Cartan formula, and for $x \in CH^iX$,
$$P^ax = \begin{cases}0 & a > i \\ x^p & a = i\end{cases}$$
Thus $CH^*X$ is an object of $\U$.

For $V\cong (\Z/p)^k$ an elementary abelian $p$-group, we have $CH^*_V \cong \F_p[y_1,\ldots,y_k]$, where $|y_i| = 1$.  The functor $- \otimes  CH^*_V \colon \U \rightarrow \U$ has a left adjoint $T_V$ which is naturally isomorphic to the restriction of $T_V^{\top}$ to $\U$.

\begin{lem}[{\cite[Section 2.2.6]{Lannes92}}]\label{lem:compare-tv-and-tv-prime}
For $M \in \U$, let $\varphi_M \colon T_V^{\top}M \rightarrow T_V M$ be the map adjoint to the composition $M \rightarrow T_VM \otimes CH^*_V \rightarrow T_VM \otimes H^*_V$, where the first map is adjoint to the identity and the second is induced by inclusion.  Then $\varphi_M$ is an isomorphism.
\end{lem}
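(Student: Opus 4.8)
The plan is to check that $\varphi_M$ is an isomorphism by testing it against every $N\in\U^{\top}$ with the Yoneda lemma, which turns the lemma into a structural comparison of $H^*_V$ and $CH^*_V$ that I would then prove by induction on the rank of $V$.

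Fix $N\in\U^{\top}$ and apply $\Hom_{\U^{\top}}(-,N)$ to $\varphi_M$. The adjunction $T_V^{\top}\dashv(-\otimes H^*_V)$ gives $\Hom_{\U^{\top}}(T_V^{\top}M,N)\cong\Hom_{\U^{\top}}(M,N\otimes H^*_V)$. On the other side, $T_VM$ lies in $\U$, so every $\A$-linear map $T_VM\to N$ has image concentrated in even degrees and hence factors through the largest $\A$-submodule $r(N)\subseteq N$ that is concentrated in even degrees; combining this with the fullness of $\U\subseteq\U^{\top}$ and the adjunction $T_V\dashv(-\otimes CH^*_V)$ on $\U$ yields $\Hom_{\U^{\top}}(T_VM,N)\cong\Hom_{\U^{\top}}(M,r(N)\otimes CH^*_V)$. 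Chasing the definition of $\varphi_M$ through these identifications, the map $\Hom_{\U^{\top}}(\varphi_M,N)$ becomes post-composition with the inclusion $\iota_N\colon r(N)\otimes CH^*_V\hookrightarrow N\otimes H^*_V$. Since $\iota_N$ is a monomorphism this is always injective, so by Yoneda it remains to show that for all $M\in\U$ and $N\in\U^{\top}$ every $\A$-linear map $M\to N\otimes H^*_V$ factors through $r(N)\otimes CH^*_V$. Because $M$ is concentrated in even degrees the image of such a map is an $\A$-submodule of $N\otimes H^*_V$ concentrated in even degrees, so the whole lemma reduces to the identity
$$r\bigl(N\otimes H^*_V\bigr)=r(N)\otimes CH^*_V\qquad(N\in\U^{\top},\ V\in\E).$$

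The inclusion $\supseteq$ is clear: $CH^*_V\subseteq H^*_V$ is a sub-unstable-algebra (this is the compatibility of the cycle map with the Steenrod operations recalled in Section~\ref{sec:chow-rings-as-unstable-modules}), so $r(N)\otimes CH^*_V$ is an $\A$-submodule of $N\otimes H^*_V$ concentrated in even degrees. For the reverse inclusion, the isomorphisms $H^*_{V\oplus\Z/p}\cong H^*_V\otimes H^*_{\Z/p}$ and $CH^*_{V\oplus\Z/p}\cong CH^*_V\otimes CH^*_{\Z/p}$ show that the identity for $V\oplus\Z/p$ follows from the identity for $\Z/p$ applied with $N$ replaced by $N\otimes H^*_V$, together with the identity for $V$; so it suffices to treat $V=\Z/p$. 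There $H^*_{\Z/p}=\F_p[y]\oplus x\,\F_p[y]$ with $|y|=2$ and $|x|=1$ (for $p=2$ one has $y=x^2$), the polynomial part is exactly $CH^*_{\Z/p}$, and the crucial feature is that the quotient $H^*_{\Z/p}/CH^*_{\Z/p}$ is concentrated in \emph{odd} degrees; more precisely the powers $y^mH^*_{\Z/p}$ form an exhaustive decreasing filtration of $H^*_{\Z/p}$ by $\A$-submodules whose successive subquotients carry the trivial $\A$-action and sit in the two consecutive degrees $2m,2m+1$. Tensoring with $N$ produces such a filtration of $N\otimes H^*_{\Z/p}$ whose subquotients are degree-shifted copies of $N$, and running through it, using that the largest even-degree submodule of a copy of $N$ shifted by $d$ is the shift of the largest even- (resp.\ odd-) degree submodule of $N$ according to the parity of $d$, one identifies $r(N\otimes H^*_{\Z/p})$ with $r(N)\otimes\F_p[y]$.

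I expect the $\Z/p$ step to be where the real work lies; equivalently it is the assertion that $T_{\Z/p}^{\top}$ carries unstable modules concentrated in even degrees to unstable modules concentrated in even degrees, so that $T_{\Z/p}^{\top}|_{\U}$ is left adjoint to $-\otimes CH^*_{\Z/p}$ and hence canonically isomorphic to $T_{\Z/p}$, the isomorphism being $\varphi$. The obstacle is that the ``$x$-part'' $x\,\F_p[y]$ of $H^*_{\Z/p}$ is \emph{not} an $\A$-submodule --- one has $\beta x=y$ --- so one cannot simply project it away; one must genuinely track how the Bockstein, together with the operation $P^1$, links the $x$- and $y$-components, and show that a submodule concentrated in even degrees acquires no $x$-component in the associated graded. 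This is the content of Lannes's Section~2.2.6 cited in the statement; concretely, since $T_{\Z/p}^{\top}$ is exact and commutes with colimits, it is enough to verify the even-concentration claim on the free objects of $\U$, where it follows from the standard splitting $T_{\Z/p}^{\top}F^{\top}(m)\cong\bigoplus_{i=0}^{m}F^{\top}(i)$ (here $F^{\top}(m)$ is the free unstable $\A$-module on a generator of degree $m$) together with the presentation of the free objects of $\U$ as quotients of the $F^{\top}(m)$.
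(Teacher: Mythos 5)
Your reduction to the identity $\tilde{\mO}(N\otimes H^*_V)=\tilde{\mO}(N)\otimes CH^*_V$ (in your notation, $r(N\otimes H^*_V)=r(N)\otimes CH^*_V$) is clean and correct: the Yoneda/co-Yoneda test over $\U^{\top}$, the unwinding of both adjunctions, and the observation that injectivity is automatic so only surjectivity of $\Hom_{\U^{\top}}(M,r(N)\otimes CH^*_V)\to\Hom_{\U^{\top}}(M,N\otimes H^*_V)$ is at stake, are all sound, as is the reduction of the identity to $V=\Z/p$ by tensoring. Note, though, that the paper does not prove this lemma at all: it cites \cite[Section 2.2.6]{Lannes92} for $p>2$ and for $p=2$ invokes the equivalence $\U\simeq\U^{\top}$ coming from $\A^{\mathrm{ev}}\cong\A$ after regrading. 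More to the point, the identity you reduce to is exactly \cite[Proposition 8.3]{LZ86}, which this paper already cites and uses in the proof of Lemma \ref{lem:n-prime-nilpotents}. Had you simply quoted that result at the point where you write ``the whole lemma reduces to the identity,'' your argument would be complete and would constitute a genuine alternative route to the paper's bare citation.

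Where the proposal has a real gap is in the attempt to prove the identity itself. The filtration argument is not rigorous: $\tilde{\mO}$ is only left exact, so from a filtration of $N\otimes H^*_{\Z/p}$ with known $\tilde{\mO}$ on the associated graded you cannot simply ``run through it'' to read off $\tilde{\mO}$ of the total object; one must argue that $\tilde{\mO}(N\otimes H^*)\cap F^m\to\tilde{\mO}(F^m/F^{m+1})$ is onto at every stage, or else control the boundary maps, and this is precisely the place where the interaction of $\beta$ (and $P^1$ at odd $p$) between the $x$- and $y$-components enters --- the thing you correctly flag as ``where the real work lies.'' The closing free-module argument is also not filled in: for the free object $\bar{\mO}F^{\top}(2m)$ of $\U$ one has $T^{\top}_{\Z/p}\bar{\mO}F^{\top}(2m)=T^{\top}_{\Z/p}F^{\top}(2m)/T^{\top}_{\Z/p}K$ with $K=\ker\bigl(F^{\top}(2m)\twoheadrightarrow\bar{\mO}F^{\top}(2m)\bigr)$, and the splitting $T^{\top}_{\Z/p}F^{\top}(2m)\cong\bigoplus_{i\le 2m}F^{\top}(i)$ still has plenty of odd-degree classes (every $F^{\top}(i)$ with $i\ge 1$ does); to conclude even-concentration of the quotient you must show $T^{\top}_{\Z/p}K$ sweeps out all of them, which is exactly the content of the identity you are trying to prove, not something that ``follows from the standard splitting.'' So as written the proposal is circular at that last step. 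Concretely: either cite \cite[Proposition 8.3]{LZ86} to close the gap, or supply the genuine computation Lannes does in \cite[Section 2.2.6]{Lannes92}.
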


Since it's isomorphic to the restriction of $T_V^{\top}$, the functor $T_V$ is also exact and commutes with tensor products.

\begin{lem}[{\cite[Section 2.2.6]{Lannes92}}]
For $M, N \in \U$, the maps $M \rightarrow T_V M \otimes CH^*_V$ and $N \rightarrow T_V N \otimes CH^*_V$ adjoint to the identity and the multiplication map $CH^*_V \otimes CH^*_V \rightarrow CH^*_V$ induce a map 
$$M \otimes N \rightarrow (T_V M \otimes CH^*_V) \otimes (T_V N \otimes CH^*_V) \rightarrow T_V M \otimes T_V N \otimes CH^*_V$$
that is adjoint to a natural isomorphism 
$$T_V(M \otimes N) \xrightarrow{\cong} T_V M \otimes T_V N$$
\end{lem}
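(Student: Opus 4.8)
The plan is to deduce this lemma from the corresponding statement for $T_V^{\top}$ (Lemma~\ref{lem:tv-commutes-with-tensor-products}) by transporting it along the comparison isomorphism $\varphi$ of Lemma~\ref{lem:compare-tv-and-tv-prime}. Note first that $\U$ is closed under tensor product, so $M \otimes N \in \U$ and the statement makes sense. Write $\eta_M \colon M \to T_V M \otimes CH^*_V$ and $\eta_M^{\top} \colon M \to T_V^{\top} M \otimes H^*_V$ for the two adjunction units, $\iota \colon CH^*_V \hookrightarrow H^*_V$ for the inclusion (which is a homomorphism of graded $\F_p$-algebras, since $CH^*_V = \F_p[y_1,\dots,y_k]$ is exactly the subalgebra of $H^*_V$ generated in degree $2$), and $\psi \colon M \otimes N \to T_V M \otimes T_V N \otimes CH^*_V$ for the map in the statement. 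By Lemma~\ref{lem:tv-commutes-with-tensor-products} the analogous map $\psi^{\top}\colon M\otimes N \to T_V^{\top}M \otimes T_V^{\top}N \otimes H^*_V$ is adjoint to a natural isomorphism $\theta^{\top}\colon T_V^{\top}(M\otimes N) \xrightarrow{\cong} T_V^{\top}M \otimes T_V^{\top}N$, i.e.\ $\psi^{\top} = (\theta^{\top}\otimes \id_{H^*_V})\circ \eta^{\top}_{M\otimes N}$; and by Lemma~\ref{lem:compare-tv-and-tv-prime}, $\varphi$ is a natural isomorphism, being the adjoint transpose of the natural transformation $(\id \otimes \iota)\circ \eta_{(-)}$.

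The key steps are as follows. First I would define the candidate isomorphism $\theta \colon T_V(M\otimes N) \to T_V M \otimes T_V N$ to be the composite $(\varphi_M \otimes \varphi_N)\circ \theta^{\top}\circ \varphi_{M\otimes N}^{-1}$; this is a natural isomorphism because all three factors are. It then remains to verify that $\theta$ is adjoint to $\psi$, i.e.\ that $\psi = (\theta \otimes \id_{CH^*_V})\circ \eta_{M\otimes N}$. Since $\iota$ is injective and tensoring over $\F_p$ is exact, $\id_{T_VM \otimes T_VN}\otimes \iota$ is injective, so it suffices to prove this identity after post-composing with it, which lands both sides in $T_VM \otimes T_VN \otimes H^*_V$. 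On one side, the defining property of $\varphi$ gives $(\id \otimes \iota)\circ \eta_L = (\varphi_L \otimes \id_{H^*_V})\circ \eta^{\top}_L$ for all $L \in \U$; applying this with $L = M \otimes N$ and using $\theta \circ \varphi_{M\otimes N} = (\varphi_M \otimes \varphi_N)\circ \theta^{\top}$ rewrites $(\id\otimes\iota)\circ(\theta\otimes\id)\circ\eta_{M\otimes N}$ as $(\varphi_M \otimes \varphi_N \otimes \id_{H^*_V})\circ \psi^{\top}$. On the other side, one unwinds the definition of $\psi$: since $\iota$ is an algebra map it intertwines the multiplications of $CH^*_V$ and $H^*_V$, so $(\id \otimes \iota)\circ \psi$ is the composite of $\eta_M \otimes \eta_N$, the inclusions $\iota$ on both $CH^*_V$ factors, and multiplication in $H^*_V$; applying $(\id\otimes\iota)\circ\eta_M = (\varphi_M\otimes\id)\circ\eta^{\top}_M$ and likewise for $N$, and pulling $\varphi_M \otimes \varphi_N$ out past the multiplication on $H^*_V$, this too equals $(\varphi_M \otimes \varphi_N \otimes \id_{H^*_V})\circ \psi^{\top}$. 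Comparing the two computations gives the desired identity.

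I expect the main obstacle to be purely bookkeeping: carefully tracking the symmetry isomorphisms of the tensor product when unwinding $\psi$ and $\psi^{\top}$ in the last step, and confirming that $\varphi_M \otimes \varphi_N$ commutes past the multiplication maps as claimed. Beyond that, the argument is entirely formal manipulation with the two adjunctions and the naturality of $\varphi$ and $\theta^{\top}$; this is in essence the reasoning of \cite[Section~2.2.6]{Lannes92}.
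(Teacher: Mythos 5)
Your proof is correct, and it fills in an argument that the paper itself omits: the paper simply cites \cite[Section 2.2.6]{Lannes92} for this lemma (with the subsequent remark handling $p=2$ separately via the equivalence $\U^{\top}\simeq\U$ given by restriction to $\A^{\mathrm{ev}}$ and regrading). Your transport argument via the comparison isomorphism $\varphi$ of Lemma~\ref{lem:compare-tv-and-tv-prime} is sound in all details: the defining relation $(\id\otimes\iota)\circ\eta_L = (\varphi_L\otimes\id_{H^*_V})\circ\eta^{\top}_L$ is exactly the adjoint-transpose characterization of $\varphi_L$, the injectivity of $\id\otimes\iota$ (tensoring a monomorphism of graded $\F_p$-vector spaces) lets you check the required identity after base change to $H^*_V$, and the fact that $\iota$ is a ring map is precisely what makes $(\id\otimes\iota)\circ\psi$ match up with $\psi^{\top}$ after the multiplication. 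One small advantage of your route over the paper's is that it is uniform in $p$ and makes the mechanism explicit, rather than splitting into the odd-primary citation and the $p=2$ regrading trick; one small disadvantage is that, as you note yourself, the bookkeeping with shuffle isomorphisms is left implicit, but this is harmless since all maps in sight are maps of graded $\F_p$-modules and the signs introduced by the symmetry isomorphism vanish on even-degree modules.
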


Note that the discussion in \cite[Section 2.2.6]{Lannes92} refers to the case $p > 2$, but when $p = 2$, the categories $\U$ and $\U^{\top}$ are equivalent: if $\A^{\text{ev}} \subseteq \A$ is the subalgebra generated by the elements $\Sq^{2a}$, $a\ge 0$, an equivalence $\U^{\top} \to \U$ is given by restricting to $\A^{\text{ev}}$ and changing the grading, using that $\A^{\text{ev}} \cong \A$ after regrading.  Furthermore, the $\A$-modules $H^*_V$ and $CH^*_V$ are isomorphic after changing the grading in the $p = 2$ case.

We will need the following lemmas later.

\begin{lem}\label{lem:tensor-product-and-isomorphisms}
Suppose $M, N, K, L \in \U^{\top}$ and $f \colon T_V M \rightarrow K$ and $g \colon T_V N \rightarrow L$ are isomorphisms adjoint to maps $\tilde{f} \colon M \rightarrow K \otimes CH^*_V$ and $\tilde{g} \colon N \rightarrow L \otimes CH^*_V$.  Then the map 
$$M \otimes N \rightarrow (K \otimes CH^*_V) \otimes (L \otimes CH^*_V) \rightarrow K \otimes L \otimes CH^*_V$$
induced by $\tilde{f}$, $\tilde{g}$, and the multiplication map $CH^*_V \otimes CH^*_V \rightarrow CH^*_V$ is adjoint to an isomorphism 
  $$T_V(M \otimes N) \xrightarrow{\cong} K \otimes L$$
\end{lem}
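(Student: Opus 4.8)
The plan is to deduce this from the immediately preceding lemma (that $T_V$ commutes with tensor products) together with naturality of the adjunction $T_V \dashv (- \otimes CH^*_V)$; once the adjunctions are unwound the statement is purely formal, needing no Chow-theoretic or Steenrod-algebra input beyond what that lemma already packages.

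First I would write $\eta_M \colon M \to T_V M \otimes CH^*_V$ and $\eta_N \colon N \to T_V N \otimes CH^*_V$ for the units of the adjunction, so that the hypothesis that $f$ and $g$ are adjoint to $\tilde f$ and $\tilde g$ reads $\tilde f = (f \otimes \id_{CH^*_V}) \circ \eta_M$ and $\tilde g = (g \otimes \id_{CH^*_V}) \circ \eta_N$. Substituting these into the map
$$h \colon M \otimes N \to (K \otimes CH^*_V) \otimes (L \otimes CH^*_V) \to K \otimes L \otimes CH^*_V$$
of the statement, and using naturality of the symmetry isomorphism of $\U^{\top}$ with respect to $f \otimes \id_{CH^*_V}$ and $g \otimes \id_{CH^*_V}$ together with functoriality of $\otimes$ in the multiplication $CH^*_V \otimes CH^*_V \to CH^*_V$, I would obtain a factorization
$$h = \bigl(f \otimes g \otimes \id_{CH^*_V}\bigr) \circ h_0,$$
where $h_0 \colon M \otimes N \to T_V M \otimes T_V N \otimes CH^*_V$ is the map built from $\eta_M$, $\eta_N$, the symmetry, and the multiplication on $CH^*_V$ --- that is, exactly the composite appearing in the preceding lemma.

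Next I would invoke that lemma, which asserts that $h_0$ is adjoint to the natural isomorphism $\Phi \colon T_V(M \otimes N) \xrightarrow{\cong} T_V M \otimes T_V N$; equivalently, $h_0 = (\Phi \otimes \id_{CH^*_V}) \circ \eta_{M \otimes N}$. Combining this with the factorization above gives
$$h = \left(\bigl((f \otimes g) \circ \Phi\bigr) \otimes \id_{CH^*_V}\right) \circ \eta_{M \otimes N},$$
so by the defining property of the unit $\eta_{M \otimes N}$ the map $h$ is adjoint to $(f \otimes g) \circ \Phi \colon T_V(M \otimes N) \to K \otimes L$. This is a composite of isomorphisms, hence an isomorphism, which is exactly the claim.

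The only real work is the bookkeeping in the middle step: carefully matching the symmetry isomorphisms and multiplication maps used to assemble $h$ with those used to assemble $h_0$ in the preceding lemma, so that the factorization $h = (f \otimes g \otimes \id_{CH^*_V}) \circ h_0$ holds on the nose rather than merely up to coherence isomorphisms. I expect this to be the main --- if modest --- obstacle; it amounts to chasing one commutative diagram in the symmetric monoidal category $\U^{\top}$ and applying naturality of $\eta$ in each variable.
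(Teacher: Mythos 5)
Your proof is correct and takes essentially the same route as the paper: both factor the given map $M \otimes N \to K \otimes L \otimes CH^*_V$ through the map $h_0$ of the preceding lemma (the paper via a commutative diagram, you via the explicit unit formula $\tilde f = (f \otimes \id) \circ \eta_M$), and then read off the adjoint as $(f \otimes g)$ composed with the natural isomorphism $T_V(M\otimes N) \cong T_VM \otimes T_VN$.
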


\begin{proof}
We have a commutative diagram
$$\begin{tikzcd}[column sep=tiny]
M \otimes N \arrow[r,""] \arrow[d,""] & T_VM \otimes CH^*_V \otimes T_VN \otimes CH^*_V \arrow[r,""] \arrow[dl,"\cong"] & T_VM \otimes T_VN \otimes CH^*_V \arrow[d,"\cong"]\\
K \otimes CH^*_V \otimes L \otimes CH^*_V \arrow[rr,""] & & K \otimes L \otimes CH^*_V
\end{tikzcd}$$
This gives a commutative square
$$\begin{tikzcd}
M \otimes N \arrow[r,""] \arrow[d,""] & T_VM \otimes T_VN \otimes CH^*_V \arrow[d,""]\\
K \otimes L \otimes CH^*_V \arrow[r,"\id"] & K \otimes L \otimes CH^*_V 
\end{tikzcd},$$
so by adjunction the following square commutes.
$$\begin{tikzcd}
T_V(M \otimes N) \arrow[r,"\cong"] \arrow[d,""] & T_VM \otimes T_VN \arrow[d,"\cong"]\\
T_V(K \otimes L \otimes CH^*_V) \arrow[r,"\epsilon"] & K \otimes L
\end{tikzcd}$$
The desired adjunction follows.
\end{proof}

\begin{lem}\label{lem:tv-for-bounded-above}
If $M \in \U$ is bounded above, the map $T_VM \rightarrow M$ adjoint to the inclusion $M \rightarrow M \otimes CH^*_V$ is an isomorphism.
\end{lem}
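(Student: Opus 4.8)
The plan is to reduce, using exactness of $T_V$ and the fact that $T_V$ commutes with filtered colimits, to the case where $M$ is a single copy of $\F_p$ concentrated in one degree, and then to settle that case by Yoneda together with an explicit computation using the Cartan formula.

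\textbf{Reduction.} Every unstable module is the filtered union of its finitely generated submodules, and a finitely generated unstable module is finite-dimensional in each degree (since the Steenrod algebra is finite-dimensional in each degree), so a finitely generated submodule of a bounded-above module is finite-dimensional. Since $T_V$ is a left adjoint it commutes with filtered colimits, the map $M\to M\otimes CH^*_V$ is compatible with filtered colimits, and a filtered colimit of isomorphisms is an isomorphism; hence it suffices to treat finite-dimensional $M$. I would then induct on $\dim_{\F_p}M$. If $\dim M\le 1$ then $M = 0$ or $M\cong\Sigma^d\F_p$ for some $d$. If $\dim M\ge 2$, let $M'\subseteq M$ be the submodule concentrated in the top nonzero degree $d$ (this is a submodule because Steenrod operations raise degree). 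If $M' = M$ then $M$ is a finite direct sum of copies of $\Sigma^d\F_p$, and we use that $T_V$ preserves finite direct sums; otherwise both $M'$ and $M/M'$ have smaller dimension, and applying the exact functor $T_V$ to $0\to M'\to M\to M/M'\to 0$, using naturality of the map in the statement, exactness of $-\otimes CH^*_V$, the inductive hypothesis, and the five lemma, we conclude for $M$. This reduces everything to $M\cong\Sigma^d\F_p$.

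\textbf{The case $M = \Sigma^d\F_p$.} By the Yoneda lemma it suffices to show that for every $N\in\U$, postcomposition with the inclusion $\iota_N\colon N\hookrightarrow N\otimes CH^*_V$, $n\mapsto n\otimes 1$, gives a bijection $\Hom_\U(\Sigma^d\F_p, N)\to\Hom_\U(\Sigma^d\F_p, N\otimes CH^*_V)$; unwinding the adjunction shows this is precisely the map induced on $\Hom(-,N)$'s by the map $T_V(\Sigma^d\F_p)\to\Sigma^d\F_p$ of the statement. As $\Sigma^d\F_p$ has trivial Steenrod action in positive degrees, a homomorphism $\Sigma^d\F_p\to L$ is the same as an element $\ell\in L^d$ with $P^a\ell = 0$ for all $a\ge 1$ (for $p$ odd, $\beta$ acts trivially on any $L\in\U$, having odd degree). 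So I must show: if $\xi\in(N\otimes CH^*_V)^d$ satisfies $P^a\xi = 0$ for all $a\ge 1$, then $\xi$ lies in $N^d\otimes(CH^*_V)^0 = N^d$. Write $\xi = \sum_{j=0}^{e}\xi_j$ with $\xi_j\in N^{d-j}\otimes(CH^*_V)^j$ and $\xi_e\ne 0$, and suppose $e\ge 1$. Comparing bidegrees in the Cartan formula, the only contribution to the summand $N^{d-e}\otimes(CH^*_V)^{ep}$ of $P^e\xi$ comes from the $b = 0$ term of $P^e\xi_e$; writing $\xi_e = \sum_t n_t\otimes\mu_t$ with the $\mu_t$ a monomial basis of $(CH^*_V)^e$, this contribution is $\sum_t n_t\otimes\mu_t^p$, using $P^e\mu_t = \mu_t^p$ since $|\mu_t| = e$. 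The $\mu_t^p$ are linearly independent because Frobenius is injective on $\F_p[y_1,\dots,y_k]$, so this element is nonzero, contradicting $P^e\xi = 0$. Hence $e = 0$, so $\xi = n\otimes 1$ with $n\in N^d$, whence $P^a\xi = P^an\otimes 1$ and the map above is a bijection; Yoneda then gives the lemma.

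\textbf{Main obstacle.} The only nonformal point is the computation above, in particular the bidegree bookkeeping showing that no cancellation occurs in the top $CH^*_V$-graded piece of $P^e\xi$; the rest — the colimit and five-lemma reductions and the passage through the adjunction — is routine, as long as one keeps straight the grading conventions of $\U$, in which $P^a$ raises degree by $a(p-1)$ and $P^{|x|}x = x^p$.
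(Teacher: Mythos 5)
Your proof is correct, and it takes a genuinely different route from the paper's. The paper disposes of this lemma in one line: it cites Lannes's Proposition 2.2.4, which is the analogous statement for a bounded-above object of $\U^{\top}$ (that $T_V^{\top} M \to M$ is an isomorphism), and transfers it to $\U$ via the comparison isomorphism $T_V^{\top} M \cong T_V M$ of Lemma~\ref{lem:compare-tv-and-tv-prime}. You instead give a self-contained proof entirely inside $\U$. The reductions — to finitely generated (hence finite-dimensional) $M$ by commutation of $T_V$ and the identity with filtered colimits, then to $M = \Sigma^d\F_p$ by exactness, the five lemma, and additivity — are routine and sound, and the Yoneda/adjunction unwinding correctly identifies the map to analyze. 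The computational heart is the Cartan-formula bookkeeping: since $P^a$ raises internal degree by $a(p-1)$ in both tensor factors and $\xi_j \in N^{d-j}\otimes(CH^*_V)^j$ with $j\le e$, the only term of $P^e\xi$ landing in $N^{d-e}\otimes(CH^*_V)^{ep}$ is $(\id\otimes P^e)\xi_e = \sum_t n_t\otimes\mu_t^p$, which is nonzero because Frobenius sends distinct monomials to distinct (hence linearly independent) monomials in $\F_p[y_1,\dots,y_k]$. That is precisely the mechanism making $CH^*_V$ $\Nil$-reduced, and you have it right. What your approach buys is independence from Lannes's $\U^{\top}$ result and from the comparison lemma, and it exploits the fact that $CH^*_V$ is a polynomial ring with no odd-degree generators, so there is no Bockstein and no exterior part to track, which would complicate the same direct argument for $H^*_V$. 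What the paper's approach buys is, of course, brevity. One cosmetic slip: you call the surviving term the ``$b=0$ term'' of $P^e\xi_e$, but under the usual convention $P^e(x\otimes y)=\sum_{a+b=e}P^ax\otimes P^by$ with $a$ acting on $N$, it is the $a=0$, $b=e$ term; the substance is unaffected.
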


\begin{proof}
This follows from Lemma \ref{lem:compare-tv-and-tv-prime} and \cite[Proposition 2.2.4]{Lannes92}, which is the analogous fact for a bounded-above $M \in \U^{\top}$.
\end{proof}

\section{Computation of \texorpdfstring{$T_VCH^*_GX$}{TVCH*GX}}
\label{sec:Computation_of_T_VCH^*_GX}

The purpose of this section is to prove Theorem \ref{thm:l_{G,X}}, which computes $T_VCH^*_GX$.  

Given $\rho \in \Hom(V,G)$, we write $C_G(\rho)$ for the centralizer of the image of $\rho$ in $G$.  The group homomorphism $V \times C_G(\rho) \rightarrow G$, $(v,h) \mapsto \rho(v)h$ induces a map 
$$f_{G,X} \colon BV \times EC_G(\rho) \times_{C_G(\rho)} X^\rho \rightarrow EG \times_G X.$$
This induces a map $CH^*_G(X) \rightarrow CH^*(BV \times EC_G(\rho) \times_{C_G(\rho)} X^\rho)$.  By \cite[Corollary 9.10]{Totaro16}, $BV$ satisfies the Chow K{\"u}nneth property, meaning that the product map $CH^*BV \otimes CH^*X \rightarrow CH^*(BV \times X)$ is an isomorphism for all schemes $X$.  It follows that $CH^*(BV \times EC_G(\rho) \times_{C_G(\rho)} X^\rho) \cong CH^*_V \otimes CH^*_{C_G(\rho)}(X^\rho)$, giving a map $CH^*_GX \rightarrow CH^*_V \otimes CH^*_{C_G(\rho)}(X^\rho)$.  This induces a map
$$\tilde{\ell}_{G,X} \colon CH^*_GX \rightarrow \prod_{[\rho] \in \Rep(V,G)} CH^*_V \otimes CH^*_{C_G(\rho)}(X^\rho),$$
which is adjoint to a map
$$\ell_{G,X} \colon T_VCH^*_G(X) \rightarrow \prod_{[\rho] \in \Rep(V,G)} CH^*_{C_G(\rho)}X^\rho,$$
where $\Rep(V,G)$ is the set of conjugacy classes of group homomorphisms $V \rightarrow G$.  Let $F_1(X)$ and $F_2(X)$ be the domain and codomain of $\ell_{G,X}$, so that $\ell_{G,X}$ defines a natural transformation of two functors $F_1$ and $F_2$ from smooth schemes to $\U$.

\begin{thm}\label{thm:l_{G,X}}
The map $\ell_{G,X}$ is an isomorphism.  
\end{thm}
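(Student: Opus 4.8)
The plan is to reduce the general case to the case $X = \Spec(k)$ via a stratification/devissage argument, and then handle $X = \Spec(k)$ by reduction to the case $G = GL_n$ (or a product of general linear groups), where the relevant Chow rings and the map $\ell_{G,X}$ can be computed explicitly using known descriptions of $CH^*B(GL_n)$ and the flag-bundle/splitting-principle techniques.

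First I would establish the case $X = \Spec(k)$. The starting point is that $G$ has a faithful representation, so $G \hookrightarrow GL_n$ for some $n$, and $GL_n/G$ is a smooth quasi-projective variety (a homogeneous space). There is a fibration $GL_n/G \to BG \to B(GL_n)$ in the relevant sense, realized at the level of finite-dimensional approximations by $EGL_n \times_G \pt = EGL_n/G \to EGL_n/GL_n$ with fiber $GL_n/G$. One knows $CH^*_{GL_n}$ is a polynomial ring on Chern classes, and $T_V$ of a polynomial ring (in particular of $CH^*_V$ and its tensor powers, which appear because $\Rep(V, GL_n)$ decomposes $V$-representations into characters) can be computed directly: $\Rep(V, GL_n)$ is the set of isomorphism classes of $n$-dimensional $V$-representations, i.e. multisets of $n$ characters of $V$, and $C_{GL_n}(\rho)$ is a product $\prod_i GL_{n_i}$ indexed by the distinct characters appearing with multiplicities $n_i$. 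So the statement for $GL_n$ is essentially the classical computation of $T_V H^*(BU(n))$ (due to Lannes, via the splitting principle / the case of a torus) transported to Chow rings, using that $CH^*B(GL_n) \to H^*(BU(n))$ behaves well and that $T_V$ on polynomial algebras only sees the "linear algebra" of characters. I would verify that $\ell_{GL_n, \pt}$ is an isomorphism by this explicit computation, reducing further via the splitting principle to a torus $T = (\mathbb{G}_m)^n$, where $CH^*_T = \F_p[t_1,\dots,t_n]$, $\Rep(V,T) = \Hom(V,T)^{\oplus}$ decomposes as a product over characters, $C_T(\rho) = T$, and the claim becomes the multiplicativity of $T_V$ (Lemma \ref{lem:tensor-product-and-isomorphisms}) applied to $CH^*_{\mathbb{G}_m} = \F_p[t]$ together with the known $T_V(\F_p[t]) \cong \prod_{\chi \in \Hom(V,\mathbb{G}_m)} \F_p[t]$.

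Next, for general $G \subseteq GL_n$ with $X = \Spec(k)$, I would use exactness of $T_V$ (Lannes's theorem) together with the long exact localization sequence for Chow groups applied to a good stratification, or better, use that $BG \to B(GL_n)$ has fiber $Y = GL_n/G$, so $CH^*_G \cong CH^*_{GL_n}(Y)$ as an unstable algebra (equivariant Chow ring of the $GL_n$-variety $Y$); then the already-treated case for $GL_n$ acting on an arbitrary smooth variety would finish it — so really the general-$X$ argument and the general-$G$ argument are the same argument, and the base case is only $G = GL_n$. This is the cleanest route: prove Theorem \ref{thm:l_{G,X}} for $G = GL_n$ and all smooth $X$, then deduce it for all $G$ by writing $CH^*_GX = CH^*_{GL_n}(GL_n \times_G X)$ and noting $GL_n \times_G X$ is smooth and $(GL_n \times_G X)^\rho$, $C_{GL_n}(\rho)$, etc. match up with $X^{\rho'}$, $C_G(\rho')$ summed appropriately over $\rho' \in \Rep(V,G)$ mapping to a fixed $\rho \in \Rep(V,GL_n)$ (using $(GL_n \times_G X)^\rho = \coprod$ over double-coset-type data of $GL_n/C$-bundles over $X^{\rho'}$, compatibly with centralizers).

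For the $GL_n$ (or general reductive-with-explicit-Chow-ring) case over a smooth $X$, the key steps are: (1) by the projective bundle formula and the splitting principle, reduce to $G$ a torus $T$; (2) for $T = \mathbb{G}_m$, compute both sides directly — $T_V CH^*_{\mathbb{G}_m}X = T_V(CH^*X[c]/(\text{relation from } EG\text{-approximation}))$, and in the colimit over approximations this becomes $T_V$ of a polynomial-type algebra over $CH^*X$, which by Lemma \ref{lem:tensor-product-and-isomorphisms} and Lemma \ref{lem:tv-for-bounded-above} (applied degreewise, since each graded piece is built from bounded-above pieces) splits as $\prod_{\chi \in V^\vee} CH^*X^{\rho_\chi}$ with $X^{\rho_\chi} = X$ since $\mathbb{G}_m$-homomorphisms $V \to \mathbb{G}_m$ act trivially on $X$... — wait, more carefully, $X^\rho$ is the fixed locus which for a torus need not be all of $X$; so (2) must genuinely use the Chow ring of the $\mathbb{G}_m$-variety $X$ and its fixed loci, i.e. an equivariant localization / Atiyah–Bott–Borel type input at the level of Chow groups (Edidin–Graham equivariant Chow groups have such a localization theorem after inverting appropriate classes, but here we want an integral mod-$p$ statement, so instead I would stratify $X$ by $\mathbb{G}_m$-orbit-type, use the localization exact sequence, exactness of $T_V$, and induction on the number of strata, with the open dense stratum handled by the free-quotient case where $CH^*_{\mathbb{G}_m}U = CH^*(U/\mathbb{G}_m)$ is bounded-above-built and Lemma \ref{lem:tv-for-bounded-above} applies).

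The main obstacle I expect is precisely this last point: controlling $T_V$ of $CH^*_GX$ through the stratification of $X$ by fixed loci of subgroups, i.e. making the "equivariant localization" heuristic rigorous mod $p$ without inverting anything. The two tools that make it work are (a) exactness of $T_V$, which turns the Chow localization long exact sequences into short exact sequences compatible under $\ell$, and (b) the five lemma, reducing everything to the case of a free action (where $CH^*_GX = CH^*(EG\times_G X)$, and one invokes that $X^\rho$ is empty unless $\rho$ is trivial, for which the answer is $CH^*(X/G)$ — matched on the other side since $\Rep(V,G)$ contributes only the trivial class when the $G$-action, hence every $V$-action, is free) — together with Noetherian induction on the $G$-scheme $X$. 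I'd also need to check naturality of $\ell_{G,X}$ in $X$ along open immersions and proper pushforwards, which is routine but must be stated so that the five-lemma step is legitimate. The base of the induction and the torus/splitting-principle reductions are where all the real content sits; everything else is bookkeeping with the isomorphisms from Section \ref{sec:chow-rings-as-unstable-modules}.
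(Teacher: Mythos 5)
Your overall strategy (reduce a general $G$ to a ``standard'' group for which the equivariant Chow ring and its $T_V$ are computable, and handle the action on a general $X$ by stratifying along fixed loci and using exactness of $T_V$) is the right spirit, but the choice of standard group is where the gap sits, and it is not just bookkeeping. You want to base everything on $GL_n$ and then, via the splitting principle, on a torus $T$; the paper instead bases everything on an elementary abelian $S$, and this is forced. The devissage along fixed-point strata requires turning the localization long exact sequence $CH^*_G Z \xrightarrow{i_*} CH^*_G X \to CH^*_G U \to 0$ into a short exact sequence of unstable $\A$-modules, and this is not automatic: $i_*$ is a pushforward, which does not commute with Steenrod operations and need not be injective. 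The paper's Lemma~\ref{lem:exact-sequences} gets around both problems simultaneously using \cite[Theorem 6.7]{Totaro14}, a Duflot-type theorem which is specific to $S$ elementary abelian: the Euler class $e_{d,W}$ of the normal bundle of each fixed-point stratum is a non-zero-divisor in $CH^*_S X_{d,W}$, so one can replace $(i_{d,W})_*$ by the honest $\A$-linear map $(\oplus i_{d,W}^*)^{-1}$ onto $\oplus_{d,W}e_{d,W}CH^*_S X_{d,W}$, and the sequence is short exact. For a torus acting on a general smooth $X$, you have no analogue of this --- the input you would need (torus localization at the level of mod~$p$ Chow groups, without inverting Euler classes) is precisely what you identify as the ``main obstacle,'' and your proposed fix (stratify by orbit type, apply $T_V$ to the localization sequence, five lemma) stalls exactly because the map you'd apply $T_V$ to is not a morphism of $\A$-modules.

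The paper's route avoids the torus entirely. After establishing $\ell_{S,X}$ for elementary abelian $S$ (Lemmas~\ref{lem:l_{S,*}} and~\ref{lem:l_{S,X}}), it proves $\ell_{G,X\times GL(n)/S}$ (Lemma~\ref{lem:l_{G,XxGL(n)/S}}) by rewriting $CH^*_G(X\times GL(n)/S)\cong CH^*_S((X\times GL(n))/G)$ and carefully matching fixed loci and centralizers on both sides --- this is the analogue of your $CH^*_GX\cong CH^*_{GL_n}(GL_n\times_G X)$ reduction, but aimed at $S$ rather than $GL_n$, which is what makes it land on solved ground. Finally, instead of your stratification of $X$, the general case is recovered by \emph{descent}: the coequalizer $X\times GL(n)/S\times GL(n)/S\rightrightarrows X\times GL(n)/S\to X$ gives, via Lemma~\ref{lem:chow-ring-of-x-times-gln-mod-s} (the flag-bundle computation you also invoke) and faithfully flat descent for $CH^*_S$ over $CH^*_{GL(n)}$, an exact sequence $0\to CH^*_G X\to CH^*_G(X\times K)\to CH^*_G(X\times K\times K)$ which remains exact after $T_V$ and matches the corresponding sequence on the fixed-loci side. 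So: where you propose a stratification of $X$ with $GL_n$ as the base case, the paper uses a stratification of $X$ only for $S$ elementary abelian (where it actually works) and a descent argument, not a stratification, to pass from $X\times GL(n)/S$ down to $X$. To repair your proposal you would either have to establish a Duflot-type injectivity statement for tori acting on smooth $X$ (not in the literature cited here, and not routine), or replace the final stratification step by the faithfully flat descent argument, at which point you have essentially rediscovered the paper's proof.
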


Let $S \in \E$.  We will first prove Theorem \ref{thm:l_{G,X}} for the map $\ell_{S,\pt}$, then for $\ell_{S,X}$, then for $\ell_{G,X \times GL(n)/S}$, and then finally in full generality.

\begin{lem}\label{lem:l_{S,*}}
The map $\ell_{S,\pt} \colon T_VCH^*_S \rightarrow \prod_{\rho \colon V \rightarrow S} CH^*_S$ is an isomorphism.
\end{lem}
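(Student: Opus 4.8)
The plan is to reduce everything to a single universal computation: the value of $T_V$ on the polynomial ring $CH^*_S = \F_p[y_1,\dots,y_k]$, $|y_i| = 1$, where $S \cong (\Z/p)^k$. Since $CH^*_S$ is a free graded-commutative $\F_p$-algebra on the one-dimensional classes $y_1,\dots,y_k$, and $T_V$ commutes with tensor products (the $p=2$ case of the tensor lemma, and the odd-$p$ case after noting $CH^*_S \cong \bigotimes_i CH^*_{\Z/p}$), the computation factors through the $k=1$ case. So the first step is to understand $T_V$ applied to $CH^*_{\Z/p} = \F_p[y]$, $|y|=1$.

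For that, I would use the analogue of Lannes's computation of $T_V$ on $H^*_{\Z/p}$, transported through Lemma \ref{lem:compare-tv-and-tv-prime} (which identifies $T_V$ with the restriction of $T_V^{\top}$ to $\U$) and the description of $CH^*_V \cong \F_p[y_1,\dots,y_m]$ with $|y_i|=1$ for $V \cong (\Z/p)^m$. Concretely: $T_V(\F_p[y])$ should split, as an unstable algebra, as a product indexed by $\Hom(V,\Z/p)$, with each factor equal to $\F_p[y]$ again. The cleanest way to see this is to note that $T_V$ is left adjoint to $-\otimes CH^*_V$, and for any $M \in \U$ and $N \in \U$ bounded above, Lannes's calculus gives $\Hom_{\U}(T_V M, N) = \Hom_{\U}(M, N \otimes CH^*_V)$; applied with $M = \F_p[y]$ and using that $\F_p[y] = CH^*_{\Z/p}$ is the Chow ring of $B\Z/p$, the set of algebra maps $CH^*_{\Z/p} \to N \otimes CH^*_V$ is controlled by the degree-one part, which in turn is governed by $\Hom(V, \Z/p)$. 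Alternatively — and this is probably the version I would actually write — one invokes that the Chow ring $CH^*_V$ agrees with $H^*_V$ in the relevant structural sense ($p=2$) or embeds into it with $T_V$ compatible ($p$ odd, via Lemma \ref{lem:compare-tv-and-tv-prime}), and then quotes the classical fact $T_V H^*_{\Z/p} \cong \prod_{\Hom(V,\Z/p)} H^*_{\Z/p}$ from \cite{Lannes85} or \cite{Schwartz94}, restricting to even degrees.

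Having the $k=1$ case, I then assemble: $T_V CH^*_S = T_V\big(\bigotimes_{i=1}^k \F_p[y_i]\big) \cong \bigotimes_{i=1}^k T_V(\F_p[y_i]) \cong \bigotimes_{i=1}^k \prod_{\Hom(V,\Z/p)} \F_p[y_i]$. Since each factor is finite (a finite product of finitely generated things) the tensor product distributes over the products, yielding $\prod_{\phi \in \Hom(V,S)} \F_p[y_1,\dots,y_k] = \prod_{\rho\colon V\to S} CH^*_S$, using $\Hom(V,S) = \prod_{i=1}^k \Hom(V,\Z/p)$. The last thing to check is that this identification is the map $\ell_{S,\pt}$ and not merely some abstract isomorphism: each component of $\tilde\ell_{S,\pt}$ at $\rho\colon V\to S$ is the map $CH^*_S \to CH^*_V \otimes CH^*_S$ induced by the group homomorphism $V\times S \to S$, $(v,h)\mapsto \rho(v)h$, i.e. on degree-one generators $y_i \mapsto \rho^*(y_i)\otimes 1 + 1\otimes y_i$ (the comultiplication twisted by $\rho$); tracking this through the adjunction and the Künneth isomorphism shows the adjoint map hits the $\rho$-factor by the identity-plus-lower-order-terms pattern that makes the total map an isomorphism — this is exactly the bookkeeping in Lannes's original argument, and it is the one genuinely fiddly point.

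The main obstacle I anticipate is precisely this last compatibility check — verifying that the abstractly-constructed isomorphism $T_V CH^*_S \cong \prod_\rho CH^*_S$ coincides with $\ell_{S,\pt}$ rather than just establishing that both sides are abstractly isomorphic. Everything else (commuting $T_V$ past tensor products, the $k=1$ base case, distributing tensor over finite products) is formal given the lemmas already in the excerpt; the content is in identifying the map, which forces one to unwind the definitions of $f_{G,X}$, the Künneth isomorphism for $BV$, and the two adjunctions in play.
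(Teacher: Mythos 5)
Your overall reduction strategy — decompose $CH^*_S \cong \bigotimes_i CH^*_{\Z/p}$, exploit that $T_V$ commutes with tensor products, and reduce to the rank-one case — is the same one the paper uses. But the two substantive steps are left as sketches, and the second one is exactly where the paper deploys a trick that your proposal is missing.

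First, the base case $T_{\Z/p}(CH^*_{\Z/p})$: you offer two alternatives (a loose adjunction argument saying the degree-one part controls algebra maps, or ``transporting'' the classical $T_V^{\top}H^*_{\Z/p}$ computation) but neither is carried out. The paper does a genuine calculation here using the free-unstable-algebra functor $U$, the left adjoint $\bar{\mO}$ to $\U \hookrightarrow \U^{\top}$, and the identification $U\bar{\mO}F(2) \cong CH^*_{\Z/p}$; the key input is $\Hom_{\U^{\top}}(F(2),\, A \otimes CH^*_{\Z/p}) \cong \Hom_{\U^{\top}}(F(0)\oplus F(2),\, A)$, which together with $U\bar{\mO}F(0) \cong \F_p[x]/(x-x^p) \cong \F_p^p$ yields the product decomposition on representable functors. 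Your ``controlled by the degree-one part'' remark is pointing in the right direction but does not substitute for this.

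Second, and more importantly, the identification of the abstract isomorphism with $\ell_{S,\pt}$. You flag this as ``the one genuinely fiddly point'' and propose to unwind the adjunctions and K\"unneth isomorphisms directly. The paper never does this. Its move is: once the abstract isomorphism $T_V CH^*_W \cong \prod_\rho CH^*_W$ is in hand, both sides are finitely generated $\F_p$-vector spaces in each degree, so it suffices to prove $\ell_{W,\pt}$ is \emph{injective}. Injectivity is then obtained by comparison with topology: $CH^*_W \hookrightarrow H^*_W$ is injective, $T_V^{\top}$ is exact, and $T_V^{\top}H^*_W \xrightarrow{\;\cong\;} \prod_\rho H^*_W$ is already known to be an isomorphism (the paper cites \cite[Theorem 18]{Henn01}); a commutative diagram then forces $\ell_{W,\pt}$ to be injective. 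This sidesteps precisely the bookkeeping you anticipate as the main obstacle. Without it, your proposal reduces the problem to the hardest part and then stops. (Minor point: in your assembly step, the finiteness that lets tensor distribute over the product is the finiteness of $\Hom(V,\Z/p)$, not of the factors $\F_p[y_i]$, which are infinite-dimensional.)
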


\begin{proof}
The forgetful functor $\K^{\top} \rightarrow \U^{\top}$ has a left adjoint $U$, which sends an unstable $\A$-module $M$ to the symmetric algebra on $M$ quotiented by the ideal generated by elements of the form $\Sq^{|x|}x - x^2$ (for $p = 2$) or $P^{|x|/2}x - x^p$ (for $p > 2$).  These functors restrict to an adjunction on $\K$ and $\U$, where $\K$ is the category of unstable $\A$-modules concentrated in even degrees.

The inclusions $\U \rightarrow \U^{\top}$ and $\K \rightarrow \K^{\top}$ have a left adjoints $\bar{\mO}$ and $\bar{\mO}_K$, given by quotienting out by the submodule or ideal generated by elements in odd degrees.  We then compute that if $F(2)$ is the free unstable $\A$-module on a generator in degree two, then $U\bar{\mO}F(2) \cong CH^*_{\Z/p}$.  For $A \in \K$, by a series of adjunction isomorphisms together with the isomorphism $\Hom_{\U^{\top}}(F(2),A \otimes CH^*_{\Z/p}) \cong \Hom_{\U^{\top}}(F(0) \oplus F(2), A)$ we have that
$$\Hom_{\K}(T_{\Z/p}(CH^*_{\Z/p}), A) \cong \Hom_{\K}(U\bar{\mO}F(0) \otimes U\bar{\mO}F(2), A).$$
Since $F(0) \cong \F_p\{x\}$, we have $U\bar{\mO}F(0) \cong \F_p[x]/(x - x^p) \cong \F_p^p$.  It follows that $T_{\Z/p}(CH^*_{\Z/p}) \cong \prod_{\rho \colon \Z/p \rightarrow \Z/p}CH^*_{\Z/p}$.  Since $CH^*_V \cong (CH^*_{\Z/p})^{\otimes \rk V}$, 
$$T_V(CH^*_{\Z/p}) \cong (T_{\Z/p})^{\circ \rk V}(CH^*_{\Z/p}) \cong \prod_{\rho \colon V \rightarrow \Z/p}CH^*_{\Z/p}.$$  Then $T_V(CH^*_W) \cong \prod_{\rho \colon V \rightarrow W}CH^*_W$ by the fact that $T_V$ commutes with tensor products.

It remains to check that the map $\ell_{W,\pt}$ is an isomorphism. Since the domain and codomain of $\ell_{W,\pt}$ are finitely generated $\F_p$-vector spaces in each degree, it suffices to show that $\ell_{W,\pt}$ is injective.  This follows from the commutative diagram
$$\begin{tikzcd}
T_VCH^*_W \arrow[d,"\ell_{W,\pt}"] & \arrow[l,"\cong"] T_V^{\top}CH^*_W \arrow[r,""] \arrow[d,"\varphi"] & T_V^{\top}H^*_W \arrow[d,"\cong"]\\
\displaystyle\prod_{\rho}CH^*_W & \arrow[l,"\id"] \displaystyle\prod_{\rho}CH^*_W \arrow[r,""] & \displaystyle\prod_{\rho} H^*_W
\end{tikzcd}.$$
Here the group homomorphism $V \times W \rightarrow W$, $(v,w) \mapsto \rho(v)w$ and inclusion induce the composition $CH^*_W \rightarrow CH^*_W \otimes CH^*_V \rightarrow CH^*_W \otimes H^*_V$, which is adjoint to $\varphi$.  The rightmost vertical map is given by the adjoints of the maps $H^*_W \rightarrow H^*_W \otimes H^*_V$ induced by $V \times W \rightarrow W$, $(v,w) \mapsto \rho(v)w$; it is an isomorphism by \cite[Theorem 18]{Henn01}.
\end{proof}

We will need the next lemma in this section and again in Section \ref{sec:d0andd1}.  The exact sequences of parts (a) and (b) may appear more familiar if the reader imagines, in the notation of the lemma, $e_{d,W}CH^*_SX_{d,W}$ to be the equivariant Chow ring of the Thom space of the normal bundle $N_{d,W}$.  With the appropriate equivariant model structure for $\AA^1$-homotopy theory, it should be possible to make this analogy literal, but we will not need this.  One technical issue in doing so would be that the fixed points functor is not homotopy invariant.

\begin{lem}\label{lem:exact-sequences}
Let $X$ be a smooth scheme acted on by an elementary abelian group $S$.  Let $X_i \subseteq X$ be the open subscheme of points with isotropy group of rank at most $i$, $X^{(W)} \subseteq X$ be the subscheme of points with isotropy group $W \subseteq S$, and $X_{d,W}$ be the union of the connected components of $X^{(W)}$ of codimension $d$ in $X$.  Let $N_{d,W}$ be the normal bundle of $X_{d,W}$ in $X_i$, and denote its Euler class $\chi(N_{d,W})$ by $e_{d,W}$.  Then there are short exact sequences of unstable $\A$-modules
\begin{enumerate}[a)]
\item\label{eq:first-exact-sequence}
$$0 \rightarrow \bigoplus_{d,W} e_{d,W}CH^*_SX_{d,W} \rightarrow CH^*_SX_i \rightarrow CH^*_SX_{i-1} \rightarrow 0$$
and
\item\label{eq:second-exact-sequence}
$$0 \rightarrow e_{d,W}CH^*_SX_{d,W} \rightarrow CH^*_SN_{d,W} \rightarrow CH^*_S(N_{d,W} - X_{d,W}) \rightarrow 0.$$
\end{enumerate}
Moreover, there are maps $h_{d,W}$ defining maps of short exact sequences 
\begin{enumerate}[a)]
\setcounter{enumi}{2}
\item\label{eq:first-exact-sequence-map}
$$\begin{tikzcd}
  \displaystyle\bigoplus_{d,W} T_V(e_{d,W}CH^*_SX_{d,W})  \arrow[d,"\oplus h_{d,W}"] \arrow[hookrightarrow]{r} &  T_V(CH_S^*(X_{i})) \arrow[twoheadrightarrow]{r} \arrow[d,"\ell_{S,X_{i}}"] & T_V(CH_S^*X_{i-1}) \arrow[d,"\ell_{S,X_{i-1}}"] \\
  \displaystyle\bigoplus_{d,W} \displaystyle\prod_{\rho} e_{d,W,\rho}CH^*_SX_{d,W}^\rho \arrow[hookrightarrow]{r} & \displaystyle\prod_{\rho} CH_S^*X_{i}^{\rho} \arrow[twoheadrightarrow]{r} & \displaystyle\prod_{\rho} CH_S^*X_{i-1}^{\rho} 
\end{tikzcd}$$
and
\item\label{eq:second-exact-sequence-map}
$$\begin{tikzcd}
T_V(e_{d,W}CH^*_SX_{d,W})\arrow[hookrightarrow]{r}\arrow[d,"h_{d,W}"]&  T_V(CH^*_SN_{d,W})\arrow[twoheadrightarrow]{r}\arrow[d,"\ell_{S,N_{d,W}}"]&  T_V(CH^*_S(N_{d,W} - X_{d,W}))\arrow[d,"\ell_{S,N_{d,W} - X_{d,W}}"]\\
\displaystyle\prod_{\rho} e_{d,W,\rho}CH^*_SX_{d,W}^\rho \arrow[hookrightarrow]{r}& \displaystyle\prod_{\rho} CH^*_SN_{d,W}^\rho \arrow[twoheadrightarrow]{r}& \displaystyle\prod_{\rho} CH^*_S(N_{d,W} - X_{d,W})^\rho
\end{tikzcd}$$
\end{enumerate}
Here $d$ ranges from 0 to $\dim X$, $W$ ranges over all subgroups of $S$ isomorphic to $(\Z/p)^i$, and $\rho$ ranges over all homomorphisms $V \rightarrow S$.
\end{lem}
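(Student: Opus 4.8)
The plan is to realize the sequence in (a) as a segment of the equivariant localization sequence, made left exact by the statement that an Euler class is a non-zero-divisor, and then to deduce (b) as the case of (a) applied to the smooth $S$-scheme $N_{d,W}$: its zero section is exactly the locus of $N_{d,W}$ with isotropy group $W$, it has codimension $\rk N_{d,W}=d$, and its normal bundle in $N_{d,W}$ is $N_{d,W}$ itself, with Euler class $e_{d,W}$; likewise (d) will be the case of (c) for $N_{d,W}$. First I would set up the geometry. Since $S$ is finite abelian it centralizes every $W\subseteq S$, so each $X^{(W)}$, hence each $X_{d,W}$, is $S$-invariant; since $W$ is linearly reductive ($\operatorname{char} k\ne p$ and $k$ contains the $p$th roots of unity), $X^W$ is smooth, so $X_{d,W}$ is a smooth $S$-invariant closed subscheme of $X_i$ of pure codimension $d$; and as $W$ ranges over the rank-$i$ subgroups of $S$ and $d$ over $0,\dots,\dim X$, the $X_{d,W}$ form a finite disjoint union equal to the closed complement $Z$ of the open $S$-subscheme $X_{i-1}\subseteq X_i$. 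The classical localization sequence then gives exactness of $\bigoplus_{d,W}CH^*_S X_{d,W}\to CH^*_S X_i\to CH^*_S X_{i-1}\to 0$; matching this with the long exact sequence of the pair $(ES\times_S X_i,\,ES\times_S X_{i-1})$ in motivic cohomology --- whose relative term is, by homotopy purity, the reduced motivic cohomology of $\coprod_{d,W}\mathrm{Th}(N_{d,W})$ --- shows the maps are maps of unstable $\A$-modules (the codimensions being even after the regrading of Section~\ref{sec:chow-rings-as-unstable-modules}).

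The step I expect to be the main obstacle is showing that $e_{d,W}$ is a non-zero-divisor in $CH^*_S X_{d,W}$. The subgroup $W$ acts trivially on $X_{d,W}$ and, being diagonalizable, gives an $S$-equivariant weight decomposition $N_{d,W}=\bigoplus_{\chi}N_{d,W}^{(\chi)}$ ($S$-equivariant because $W$ is central in $S$), with no trivial summand since $X_{d,W}$ lies in the $W$-fixed locus and $N_{d,W}$ is transverse to it; hence $e_{d,W}=\prod_{\chi\ne 0}e_S(N_{d,W}^{(\chi)})$. Choosing a splitting $S=W\times W'$ and extending each nontrivial $\chi$ to the character $\tilde\chi=(\chi,0)$ of $S$, the bundle $N_{d,W}^{(\chi)}\otimes L_{\tilde\chi}^{-1}$ carries a trivial $W$-action, hence is pulled back from a $W'$-equivariant bundle on $X_{d,W}$. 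Using the Chow Künneth property of $BW$, I would identify $CH^*_S X_{d,W}\cong\F_p[t_1,\dots,t_i]\otimes CH^*_{W'}X_{d,W}$ with $t_1,\dots,t_i$ a basis of $CH^1 BW$ (this identification, and its freeness over $\F_p[t_1,\dots,t_i]$ --- which holds even though $S$ acts nontrivially on $X_{d,W}$ --- coming from $ES\times_S X_{d,W}\simeq BW\times(EW'\times_{W'}X_{d,W})$), and then the Euler-class-of-a-tensor formula, together with the fact that the Chern classes of $N_{d,W}^{(\chi)}\otimes L_{\tilde\chi}^{-1}$ have no $t_j$-content, gives $e_S(N_{d,W}^{(\chi)})=\chi(t)^{\rk N_{d,W}^{(\chi)}}$ plus terms of strictly lower degree in the $t_j$. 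Since $\chi(t)$ is a nonzero element of the polynomial ring $\F_p[t_1,\dots,t_i]$, multiplication by the leading term is injective on the free module $CH^*_S X_{d,W}$, and a leading-term argument then shows each $e_S(N_{d,W}^{(\chi)})$, and therefore $e_{d,W}$, is a non-zero-divisor.

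Granting this, (a) follows once the left-hand term is identified. The Euler class $e_{d,W}$ is the restriction of the Thom class of $N_{d,W}$ to the zero section, and by the splitting principle it divides each of its own Steenrod operations, so $e_{d,W}CH^*_S X_{d,W}$ is an $\A$-submodule of $CH^*_S X_{d,W}$; since $e_{d,W}$ is a non-zero-divisor, multiplication by $e_{d,W}$ identifies $\widetilde{CH}^*(\mathrm{Th}(N_{d,W}))$, as an unstable $\A$-module, with $e_{d,W}CH^*_S X_{d,W}$, under which the $\A$-linear Gysin map of the motivic cohomology sequence becomes $e_{d,W}\alpha\mapsto(i_{d,W})_*\alpha$; that this map is injective with image the internal direct sum of the $\im((i_{d,W})_*)$ follows from the self-intersection formula $i_{d,W}^*(i_{d,W})_*=(\text{multiplication by }e_{d,W})$ and the vanishing of $i_{d',W'}^*(i_{d,W})_*$ for distinct strata. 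This gives (a), and (b) is its case for $N_{d,W}$. For (c) and (d): $T_V$ is exact and commutes with the finite direct sum, so applying it to (a) makes the top row exact; the bottom row is the product over $\rho\colon V\to S$ of the sequences (a) for the smooth $S$-schemes $X^\rho$ (where $X_{d,W}^\rho=X_{d,W}$ if $\rho(V)\subseteq W$ and $\emptyset$ otherwise, and the normal bundle of $X_{d,W}^\rho$ in $X_i^\rho$ is the $\rho(V)$-fixed subbundle of $N_{d,W}$, again with no trivial $W$-summand, so the non-zero-divisor step applies verbatim), hence exact; naturality of $\ell_{S,-}$ along the open immersions $X_{i-1}\hookrightarrow X_i$ and $X_{i-1}^\rho\hookrightarrow X_i^\rho$ makes the right-hand squares commute; and since the two leftmost maps of the exact top row compose to zero, $\ell_{S,X_i}$ carries $\bigoplus_{d,W}T_V(e_{d,W}CH^*_S X_{d,W})$ into the kernel of the bottom right-hand map, which defines $\oplus h_{d,W}$. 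Specializing this construction to $N_{d,W}$ gives $h_{d,W}$ in (d), and the two agree because the Gysin pushforwards involved are compatible with the maps $f_{S,-}$ of Section~\ref{sec:Computation_of_T_VCH^*_GX} across the evident Cartesian squares.
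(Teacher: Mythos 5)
Your argument reaches the same conclusion but reorganizes the proof in a few notable ways, so a comparison is worthwhile. You reprove from scratch the key fact that $e_{d,W}$ is a non-zero-divisor (via the weight decomposition and a leading-term argument in the Künneth factor $\F_p[t_1,\dots,t_i]\otimes CH^*_{W'}X_{d,W}$), whereas the paper simply cites the proof of Totaro's Theorem 6.7 for both the exact sequence and the non-zero-divisor claim; your reproof is correct and essentially reproduces Totaro's argument. You also package (b) and (d) as the case of (a) and (c) for the $S$-scheme $N_{d,W}$, observing that the zero section is exactly its rank-$i$ isotropy stratum with normal bundle $N_{d,W}$ itself; this is a tidy consolidation that the paper doesn't make --- it proves (b)/(d) directly from the basic exact sequence for the pair $(N_{d,W},N_{d,W}\smallsetminus X_{d,W})$. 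For the $\A$-linearity of the sequence in (a), you pass through the reduced motivic cohomology of $\coprod\mathrm{Th}(N_{d,W})$; the paper avoids Thom spaces entirely (and even remarks on technical reasons to do so) and instead identifies the kernel via the $\A$-linear pullbacks $i_{d,W}^*$ composed with pushforward; both amount to the same injectivity of multiplication by $e_{d,W}$.

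The most substantive divergence is how $h_{d,W}$ is produced. The paper explicitly computes that $\tilde\ell_{S,X_{d,W}}(e_{d,W})$ lands in $\bigoplus_\rho e_{d,W,\rho}CH^*_SX_{d,W}^\rho\otimes CH^*_V$, by factoring $e_{d,W}=e_{d,W,\rho}\cdot\chi\bigl(\bigoplus_{\psi\neq0}E_\psi\bigr)$ along the $\im\rho$-isotypic decomposition of $N_{d,W}$, and takes $h_{d,W}$ to be the adjoint of the restricted map. You instead define the left vertical arrow of (c) by a diagram chase: exactness of the bottom row plus naturality of $\ell_{S,-}$ along the open immersion $X_{i-1}\hookrightarrow X_i$ forces $\ell_{S,X_i}$ to send $\bigoplus T_V(e_{d,W}CH^*_SX_{d,W})$ into the kernel on the bottom. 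This is a genuinely more conceptual route and is correct, but two points deserve to be made explicit. First, the diagram chase by itself only gives a map between the two direct sums, not a priori one that is diagonal in $(d,W)$; to see that it is $\oplus h_{d,W}$ you need the identity $i^*_{d',W',\rho}\circ\tilde\ell_{S,X_i}=\tilde\ell_{S,X_{d',W'}}\circ i^*_{d',W'}$, which follows from the commutativity of $f_{S,X_i}$ with the closed immersions of the strata (the restriction of a function $\tilde\ell$ defined by pullback, not any Gysin pushforward compatibility --- your phrase ``Gysin pushforwards \dots compatible with $f_{S,-}$'' gestures at the right square but names the wrong maps; no excess-intersection or base-change formula is needed, only the commutativity of pullbacks). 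Second, the same identity is what shows your $h_{d,W}$ from (c) agrees with the $h_{d,W}$ from (d), since both equal $\ell_{S,X_{d,W}}$ restricted to $T_V(e_{d,W}CH^*_SX_{d,W})$ under the homotopy invariance identification $CH^*_SN_{d,W}\cong CH^*_SX_{d,W}$. With those two naturality observations spelled out, your proof is complete and arguably cleaner than the paper's in that it replaces the explicit Euler-class factorization with a formal argument.
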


\begin{proof}
\emph{Proof of (\ref{eq:first-exact-sequence}) and (\ref{eq:first-exact-sequence-map}).} By \cite[Theorem 6.7]{Totaro14} (which is an algebro-geometric analog of \cite[Theorem 1]{Duflot83}), there is an exact sequence
$$ 0 \rightarrow \bigoplus_{d,W} CH_{S}^{* - d}X_{d,W} \xrightarrow{\oplus(i_{d,W})_*} CH_S^*(X_{i}) \rightarrow CH_S^*(X_{i-1}) \rightarrow 0$$
where $i_{d,W}$ is the inclusion $X_{d,W} \rightarrow X_{i}$ and the map $CH^*_S(X_{i}) \rightarrow CH^*_S(X_{i-1})$ is the pullback of the inclusion $X_{i-1} \rightarrow X_{i}$.  Note that pushforward maps generally do not commute with (cohomological) Steenrod operations, so $\oplus_{d,W}(i_{d,W})_*$ is not a map of unstable $\A$-modules.

The map $i_{d,W}^* \circ (i_{d,W})_* \colon CH_S^{*-d}X_{d,W} \rightarrow CH_S^*X_{d,W}$ is given by multiplication by $e_{d,W}$ \cite{Fulton98}.  The proof of \cite[Theorem 6.7]{Totaro14} shows that in this case $e_{d,W}$ is not a zero-divisor in $CH_S^*X_{d,W}$, so that $i_{d,W}^* \circ (i_{d,W})_*$ is an injection.  Thus $\oplus i_{d,W}^*$ defines an isomorphism of $\A$-modules
$$\ker(CH_S^*(X_{i}) \rightarrow CH_S^*(X_{i-1})) \xrightarrow{\oplus i_{d,W}^*} \bigoplus_{d,W}e_{d,W}CH_S^*X_{d,W}$$
Thus we have an exact sequence 
$$0\rightarrow \displaystyle\bigoplus_{d,W} e_{d,W}CH^*_SX_{d,W} \xrightarrow{(\oplus i_{d,W}^*)^{-1}}  CH_S^*(X_{i}) \rightarrow CH_S^*(X_{i-1}) \rightarrow 0,$$
which is the exact sequence (\ref{eq:first-exact-sequence}).  Applying $T_V$ gives the first row of (\ref{eq:first-exact-sequence-map}).  Considering The spaces $X^\rho$ instead of $X$ gives an exact sequence
$$0 \rightarrow \bigoplus_{d,W} \prod_{\rho} e_{d,W,\rho}CH^*_SX_{d,W}^\rho \xrightarrow{\oplus_{d,W,\rho}(i^*_{d,w,\rho})^{-1}} \prod_{\rho} CH_S^*X_{i}^{\rho} \rightarrow  \prod_{\rho} CH_S^*X_{i-1}^{\rho} \rightarrow 0,$$
which is the second row of (\ref{eq:first-exact-sequence-map}).  Here $i_{d,W,\rho}$ denotes the inclusion $X_{d,W} \rightarrow (X_{i})^{\rho}$.

Let $N_{d,W} = \bigoplus_{\psi \in \hom(\im \rho,G_m)} E_\psi$ be the isotypic decomposition of $N_{d,W}$.  Note that $E_0$, the subbundle of $N_{d,W}$ on which $\im \rho$ acts trivially, is $N_{d,W,\rho}$.  Thus $e_{d,W} = e_{d,W,\rho} \cdot \chi (\bigoplus_{\psi \neq 0} E_\psi)$.  Let $W_{\rho} \subseteq S$ be such that $S = \im \rho \times W_\rho$.  Then 
$$CH_S^* X_{d,W}^\rho \cong CH^*_{W_\rho} X_{d,W}^\rho \otimes CH^*_{\im \rho},$$
and since $\im \rho$ acts trivially on $E_0$, $e_{d,W,\rho}$ lies in the subring $CH^*_{W_\rho} X_{d,W}^\rho \otimes 1$ of $CH_S^* X_{d,W}^\rho$.  We have a commutative square
$$\begin{tikzcd}
CH_S^* X_{d,W}^\rho \arrow[r,""] \arrow[d,"\cong"] & CH_S^* X_{d,W}^\rho \otimes CH^*_{V} \arrow[d,"\cong"]\\
CH^*_{W_\rho} X_{d,W}^\rho \otimes CH^*_{\im \rho} \arrow[r,""] & CH^*_{W_\rho} X_{d,W}^\rho \otimes CH^*_{\im\rho} \otimes CH^*_{V} 
\end{tikzcd}$$
where the top map is induced by $V \times S \rightarrow S$, $(v,s) \mapsto \rho(v)s$ and the bottom map is induced by the comultiplication map $CH^*_{\im \rho} \rightarrow CH^*_{\im \rho} \otimes CH^*_{\im \rho}$ and the inclusion $CH^*_{\im\rho} \subseteq CH^*_{V}$.  It follows that 
$$\tilde{\ell}_{S,X_{d,W}}(e_{d,W}) \in \bigoplus_{\rho} e_{d,W,\rho} CH^*_S X_{d,W}^\rho,$$ 
so $\tilde{\ell}_{S,X_{d,W}}$ induces a map $\tilde{h}_{d,W}$ in the commutative square below.
$$\begin{tikzcd}
e_{d,W}CH^*_S X_{d,W} \arrow[r,""] \arrow[d,dashed,"\tilde{h}_{d,W}"] & CH^*_S X_{d,W} \arrow[d,"\tilde{\ell}_{S,X_{d,W}}"]\\
\displaystyle\bigoplus_{d,W}\displaystyle\prod_{\rho} e_{d,W,\rho}CH^*_S X_{d,W}^\rho \otimes CH^*_{V} \arrow[r,""] & \displaystyle\prod_{\rho} CH^*_S X_{d,W}^\rho \otimes CH^*_{V} 
\end{tikzcd}$$
Thus we have a commutative square
$$\begin{tikzcd}[column sep=huge]
e_{d,W}CH^*_S X_{d,W} \arrow[r,"(i^*_{d,W})^{-1}"] \arrow[d,"\tilde{h}_{d,W}"] & CH^*_S(X_{i})  \arrow[d,"\tilde{\ell}_{S,X_{i}}"]\\
\displaystyle\bigoplus_{d,W}\displaystyle\prod_{\rho} e_{d,W,\rho}CH^*_S X_{d,W}^\rho \otimes CH^*_{V} \arrow[r,"(i^*_{d,W,\rho})^{-1} \otimes \id"]& \displaystyle\prod_{\rho} CH^*_S(X_{i})^\rho \otimes CH^*_{V} 
\end{tikzcd}$$
Let $h_{d,W}$ be the map adjoint to $\tilde{h}_{d,W}$.  It follows that the first square in (\ref{eq:first-exact-sequence-map}) commutes, so (\ref{eq:first-exact-sequence-map}) is a map of short exact sequences.

\emph{Proof of (\ref{eq:second-exact-sequence}) and (\ref{eq:second-exact-sequence-map})}
The basic exact sequence for Chow groups gives an exact sequence
$$CH^*_SX_{d,W} \xrightarrow{(\alpha_{d,W})_*} CH^*_SN_{d,W} \xrightarrow{(\beta_{d,W})^*} CH^*_S(N_{d,W} - X_{d,W}) \rightarrow 0$$
where $\alpha_{d,W}$ and $\beta_{d,W}$ are the inclusions $X_{d,W} \rightarrow N_{d,W}$ and $N_{d,W} - X_{d,W} \rightarrow N_{d,W}$.  We again have that $(\alpha_{d,W})^* \circ (\alpha_{d,W})_* \colon CH^*_SX_{d,W} \rightarrow CH^*_SX_{d,W}$ is given by multiplication by $e_{d,W}$.  Since $e_{d,W}$ is not a zero divisor, $(\alpha_{d,W})^*$ defines an isomorphism of $\A$-modules $\ker(CH^*_SN_{d,W} \rightarrow CH^*_S(N_{d,W} - X_{d,W})) \rightarrow e_{d,W}CH^*_SX_{d,W}$.  This gives (\ref{eq:second-exact-sequence}) and exactness of the first row of (\ref{eq:second-exact-sequence-map}), where the first map is given by $(\alpha_{d,W}^*)^{-1}$.
Exactness of the second row of (\ref{eq:second-exact-sequence-map}) follows from a similar argument, with the first map given by $(\alpha_{d,W,\rho}^*)^{-1}$, where the inclusion $\alpha_{d,W,\rho} \colon X_{d,W}^\rho \rightarrow N_{d,W}^\rho$ induces an isomorphism $\ker(CH^*_SN_{d,W}^\rho \rightarrow CH^*_S(N_{d,W} - X_{d,W})^\rho) \rightarrow e_{d,W}CH^*_SX_{d,W}^\rho$.

By the definition of $\tilde{h}_{d,W}$, we have a commutative square
$$\begin{tikzcd}[column sep=huge]
e_{d,W}CH^*_S X_{d,W} \arrow[r,"(\alpha_{d,W}^*)^{-1}"] \arrow[d,"\tilde{h}_{d,W}"] & CH^*_S(N_{d,W})  \arrow[d,"\tilde{\ell}_{S,N_{d,W}}"]\\
\displaystyle\bigoplus_{d,W}\displaystyle\prod_{\rho} e_{d,W,\rho}CH^*_S X_{d,W}^\rho \otimes CH^*_{V} \arrow[r,"(\alpha_{d,W,\rho}^*)^{-1} \otimes \id"]& \displaystyle\prod_{\rho} CH^*_SN_{d,W}^\rho \otimes CH^*_{V} 
\end{tikzcd}$$
It follows that (\ref{eq:second-exact-sequence-map}) is a map of short exact sequences.
\end{proof}

\begin{lem}\label{lem:l_{S,X}}
The map $\ell_{S,X} \colon T_VCH^*_SX \rightarrow \prod_{\rho \colon V \rightarrow S} CH^*_{S}X^\rho$ is an isomorphism.
\end{lem}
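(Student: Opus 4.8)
The plan is to induct on the rank of the maximal isotropy group appearing in $X$, using the stratification $X_0 \subseteq X_1 \subseteq \cdots \subseteq X_{\rk S} = X$ from Lemma~\ref{lem:exact-sequences}. The base case is $X_0$, the open locus where $S$ acts with finite (equivalently trivial, since $S$ is elementary abelian $p$ and acts algebraically on a smooth scheme) isotropy; there the quotient $X_0/S$ is (approximated by) a smooth scheme and $CH^*_SX_0 \cong CH^*(X_0/S)$, which is bounded above only if $X_0$ is finite-dimensional — so instead the point is that $S$ acts \emph{freely} on $X_0$, hence $CH^*_SX_0$ has no $S$-fixed-point contributions and one checks directly that $T_V$ of it matches $\prod_\rho CH^*_SX_0^\rho$, where only the trivial $\rho$ contributes and $X_0^{\mathrm{triv}} = X_0$. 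Actually the cleaner base case: $X_0^\rho = \emptyset$ for $\rho \neq 0$, and for $\rho = 0$ we need $T_V CH^*_S X_0 \cong CH^*_S X_0$; this should follow because $CH^*_S X_0 \cong CH^*(EG\times_S X_0)$ where $S$ acts freely, combined with Lemma~\ref{lem:l_{S,*}} applied fibrewise — more precisely, one reduces to showing $T_V$ applied to the Chow ring of a space with free $S$-action is trivial on the $T_V$ side in the sense that only $\rho = 0$ survives, which is the content of the fixed-point locus being empty.

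The inductive step uses Lemma~\ref{lem:exact-sequences}(c): we have a map of short exact sequences with $\oplus h_{d,W}$ on the left, $\ell_{S,X_i}$ in the middle, and $\ell_{S,X_{i-1}}$ on the right, and $T_V$ is exact (Lannes), so by the five lemma it suffices to know that $\ell_{S,X_{i-1}}$ is an isomorphism (inductive hypothesis, since $X_{i-1}$ has isotropy of rank $\le i-1$) and that each $h_{d,W}$ is an isomorphism. For the latter I would use the second map of short exact sequences, Lemma~\ref{lem:exact-sequences}(d), which relates $h_{d,W}$ to $\ell_{S,N_{d,W}}$ and $\ell_{S,N_{d,W}-X_{d,W}}$: again by exactness of $T_V$ and the five lemma, $h_{d,W}$ is an isomorphism provided $\ell_{S,N_{d,W}}$ and $\ell_{S,N_{d,W}-X_{d,W}}$ are. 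Now $N_{d,W} \to X_{d,W}$ is a vector bundle, so $CH^*_S N_{d,W} \cong CH^*_S X_{d,W}$ by homotopy invariance, and on $X_{d,W}$ the group $S$ acts with constant isotropy exactly $W$; writing $S = W \times W'$ and using the Chow Künneth property this reduces $\ell_{S,N_{d,W}}$ to a tensor of $\ell_{W',X_{d,W}/W}$-type data (where the $W'$-action is free on the relevant locus, handled by the base case / induction on $\dim$) with $\ell_{W,\pt}$ from Lemma~\ref{lem:l_{S,*}}, combined via Lemma~\ref{lem:tensor-product-and-isomorphisms}. The complement $N_{d,W}-X_{d,W}$ is covered by a secondary induction: it has strictly smaller-dimensional fixed loci for the isotropy-$W$ stratum, or one sets up the whole argument as a double induction on $(\rk, \dim)$ so that $N_{d,W}-X_{d,W}$, having smaller dimension, is covered.

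The main obstacle I expect is the bookkeeping in the inductive step for $h_{d,W}$: verifying that the decomposition $e_{d,W} = e_{d,W,\rho}\cdot \chi(\oplus_{\psi\neq 0}E_\psi)$ and the identification $CH^*_S X_{d,W}^\rho \cong CH^*_{W_\rho}X_{d,W}^\rho \otimes CH^*_{\im\rho}$ from Lemma~\ref{lem:exact-sequences} are compatible with the $T_V$-adjunction maps in the precise way needed to conclude $h_{d,W}$ is an isomorphism from the $\ell$'s being isomorphisms — i.e.\ that the square in Lemma~\ref{lem:exact-sequences}(d) together with Lemma~\ref{lem:tensor-product-and-isomorphisms} genuinely forces $h_{d,W}$ to be iso rather than merely compatible. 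A subtle point is that $\chi(\oplus_{\psi\neq 0}E_\psi)$ need not be a non-zero-divisor on its own, only the product $e_{d,W}$ is, so one must be careful to work with $e_{d,W}CH^*_SX_{d,W}$ as a submodule rather than trying to invert Euler classes; I would handle this by always phrasing statements in terms of the sub-$\A$-modules $e_{d,W}CH^*_S(-)$ exactly as Lemma~\ref{lem:exact-sequences} does, and transporting isomorphisms along the pullback maps $i_{d,W}^*$ and $\alpha_{d,W}^*$ which are injective by the non-zero-divisor property established in the proof of \cite[Theorem 6.7]{Totaro14}.
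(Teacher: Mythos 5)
Your overall strategy matches the paper's: induct on $r(X)$ (the maximal rank of an isotropy subgroup), reduce via Lemma~\ref{lem:exact-sequences}(c) to showing each $h_{d,W}$ is an isomorphism, reduce further via Lemma~\ref{lem:exact-sequences}(d) and the five lemma to proving the claim for $N_{d,W}$ and $N_{d,W}-X_{d,W}$, and handle $N_{d,W}$ by homotopy invariance plus the tensor decomposition $CH^*_SX_{d,W}\cong CH^*_W\otimes CH^*(X_{d,W}/W')$ with Lemmas~\ref{lem:l_{S,*}}, \ref{lem:tv-for-bounded-above}, and~\ref{lem:tensor-product-and-isomorphisms}. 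But there are two places where your writeup goes astray.

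First, the base case is over-thought and as stated not correct. You say $CH^*(X_0/S)$ ``is bounded above only if $X_0$ is finite-dimensional'' as though that were a live issue, and then wander into a discussion of which $\rho$ ``contribute.'' But in the paper's conventions all schemes are of finite type over $k$, hence finite-dimensional, so when $S$ acts freely on $X_0$ we simply have $CH^*_SX_0\cong CH^*(X_0/S)$ with $X_0/S$ finite-dimensional, hence $CH^*(X_0/S)$ bounded above, and Lemma~\ref{lem:tv-for-bounded-above} applies directly. No fibrewise application of Lemma~\ref{lem:l_{S,*}} or ``free-action trivializes $T_V$'' heuristic is needed.

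Second, and more seriously, your handling of $N_{d,W}-X_{d,W}$ has a genuine gap. You offer two alternatives: a vague remark about the isotropy-$W$ stratum having smaller-dimensional fixed locus, or a ``double induction on $(\rk,\dim)$'' so that the complement, ``having smaller dimension, is covered.'' The second alternative fails: $N_{d,W}$ is a rank-$d$ bundle over $X_{d,W}$, which has codimension $d$ in $X_i$, so $\dim N_{d,W}=\dim X$, and for $d>0$ the open complement $N_{d,W}-X_{d,W}$ also has dimension $\dim X$. The dimension does not decrease, so an induction on dimension cannot close the loop. The correct observation, which you did not make and which is exactly what makes the \emph{single} induction on $r$ go through, is that $r(N_{d,W}-X_{d,W})<r(X)$: since $X_{d,W}$ is precisely the locus with isotropy $W$, the group $W'$ acts freely on $N_{d,W}-X_{d,W}$, and because the normal bundle $N_{d,W}$ contains no trivial $W$-summands in its isotypic decomposition, every point of $N_{d,W}-X_{d,W}$ has isotropy a \emph{proper} subgroup of $W$. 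Without this point the inductive step does not terminate.

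The ``main obstacle'' you worry about (whether commutativity of the squares in Lemma~\ref{lem:exact-sequences}(c)--(d) suffices to force $h_{d,W}$ to be an isomorphism) is not actually an obstacle: the rows are short exact, $T_V$ is exact, and the two outer verticals are isomorphisms, so the five lemma settles it. Your caution about non-zero-divisors of Euler classes is likewise a concern internal to the proof of Lemma~\ref{lem:exact-sequences}, which you are entitled to cite as a black box here.
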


\begin{proof}
Let $r(X) = \max_T(\rk T)$, where $T$ ranges over all isotropy subgroups of points on $X$.  If $r(X) = 0$, then $S$ acts freely on $X$, so $CH^*_SX \cong CH^*X/S$, and the claim follows from Lemma \ref{lem:tv-for-bounded-above}.  Now suppose $r(X) > 0$.

By Lemma \ref{lem:exact-sequences}(\ref{eq:first-exact-sequence-map}) for $i = r(X)$, it suffices to prove that $h_{d,W}$ is an isomorphism, where $0 \le d \le \dim(X)$ and $W = (\Z/p)^{r(X)} \subseteq S$.  By Lemma \ref{lem:exact-sequences}(\ref{eq:second-exact-sequence-map}), it suffices to prove the lemma for $N_{d,W}$ and $N_{d,W} - X_{d,W}$.

By $\AA^1$-homotopy invariance, $CH^*_SN_{d,W} \cong CH^*_SX_{d,W} \cong CH^*_W(X_{d,W}/W')$, where $S = W \oplus W'$.  Since $W$ acts trivially on $X_{d,W}/W'$, $CH^*_W(X_{d,W}/W') \cong CH^*_W \otimes CH^*(X_{d,W}/W')$, and the lemma for $N_{d,W}$ follows from Lemmas \ref{lem:l_{S,*}}, \ref{lem:tv-for-bounded-above}, and \ref{lem:tensor-product-and-isomorphisms}.  

Since $X_{d,W}$ is the subscheme of $X$ with isotropy subgroup equal to $W$, $W'$ acts freely on $N_{d,W} - X_{d,W}$.  Moreover, the isotypic decomposition of $N_{d,W}$ contains no trivial representations of $W$.  Thus every point in $N_{d,W} - X_{d,W}$ has a proper subgroup of $W$ as its isotropy subgroup, so $r(N_{d,W} - X_{d,W}) < r(X)$, and the lemma holds for $N_{d,W} - X_{d,W}$ by induction on $r(X)$.
\end{proof}

\begin{lem}\label{lem:l_{G,XxGL(n)/S}}
Let $G \to GL(n)$ be a faithful representation, let $S \subset GL(n)$ be the subgroup of diagonal matrices of order 1 or $p$, and let $G$ act on $X \times GL(n)/S$ diagonally.  Then the map $\ell_{G,X\times GL(n)/S}$ is an isomorphism.
\end{lem}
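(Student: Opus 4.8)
The plan is to reduce the statement to Lemma~\ref{lem:l_{S,X}} by recognising $CH^*_G(X \times GL(n)/S)$ as the $S$-equivariant Chow ring of an auxiliary smooth $S$-scheme and transporting $\ell$ across this identification.

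First I would record the identification. The scheme $GL(n)$ carries a free left $G$-action and a commuting free right $S$-action by multiplication, so one may form $X' := (GL(n) \times X)/G$, where $G$ acts diagonally (by left multiplication on $GL(n)$ and the given action on $X$) and $S$ acts through right multiplication on the $GL(n)$ factor; this is a smooth $S$-scheme (or separated algebraic space, per the conventions of Section~\ref{sec:notation}). I claim there is an isomorphism of unstable $\A$-algebras $CH^*_G(X \times GL(n)/S) \cong CH^*_S(X')$. Both sides compute $CH^*$ of $(EG \times ES \times GL(n) \times X)/(G \times S)$: quotienting by $S$ first and using that $S$ acts freely on the $GL(n)$-factor (so that the $ES$-factor contributes nothing to the Chow ring) recovers $EG \times_G (X \times GL(n)/S)$, while quotienting by $G$ first and using freeness of the left $G$-action on $GL(n)$ recovers $ES \times_S X'$. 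Each intermediate comparison map is the projection of a bundle whose fibre is an open subset of a linear representation with complement of high codimension, so it induces an isomorphism on Chow rings compatible with Steenrod operations (Steenrod operations being natural under pullback). Granting this, I apply the exact functor $T_V$; it then suffices to check that $\ell_{G, X \times GL(n)/S}$ corresponds to $\ell_{S, X'}$ under this identification together with a matching of the targets.

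To match the targets I would decompose both products. On the $G$-side $(X \times GL(n)/S)^\rho = X^\rho \times (GL(n)/S)^\rho$, and because $k$ has characteristic different from $p$ and contains the $p$th roots of unity, every homomorphism $V \to GL(n)$ is diagonalisable with $p$th-root-of-unity eigenvalues, hence conjugate into $S$; a direct analysis then shows $(GL(n)/S)^\rho$ is nonempty and is a disjoint union of copies of $C_{GL(n)}(\rho)/S$ (up to conjugating $S$ inside $C_{GL(n)}(\rho)$), one copy for each $\sigma \in \Hom(V,S)$ whose composite into $GL(n)$ is $GL(n)$-conjugate to $\rho$. On the $S$-side, for each $\sigma$ the fixed scheme $(X')^\sigma$ breaks up as a disjoint union over the finitely many $G$-conjugacy classes of homomorphisms $\rho\colon V \to G$ that become $GL(n)$-conjugate to $\sigma$, the corresponding piece being a twisted product of the form $(C_{GL(n)}(\rho) \times X^\rho)/C_G(\rho)$ on which $S$ acts by right multiplication through a copy of $S$ in $C_{GL(n)}(\rho)$. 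Applying the identification of the first step to the inclusion $C_G(\rho) \subseteq C_{GL(n)}(\rho)$, the $C_G(\rho)$-scheme $X^\rho$, and the relevant copy of $S$, these pieces match up, and re-indexing the double products identifies $\prod_\sigma CH^*_S((X')^\sigma)$ with $\prod_{[\rho]} CH^*_{C_G(\rho)}((X \times GL(n)/S)^\rho)$. Finally one unwinds the definitions of $\ell_{G,X\times GL(n)/S}$ and $\ell_{S,X'}$ through the inducing homomorphisms $V \times C_G(\rho) \to G$ and $V \times S \to S$ to see that the two maps agree, and concludes that $\ell_{G, X \times GL(n)/S}$ is an isomorphism by Lemma~\ref{lem:l_{S,X}}.

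I expect the main obstacle to be this matching of the targets: while the isomorphism $CH^*_G(X \times GL(n)/S) \cong CH^*_S(X')$ is essentially formal, verifying that the scheme-level decompositions of the fixed loci are compatible with the natural transformations $\ell$ — not merely that the targets are abstractly isomorphic — requires careful bookkeeping with conjugacy classes of homomorphisms $V \to G$, double cosets in $GL(n)$, and the residual $S$-actions on fixed-point schemes, and the key representation-theoretic input making the index sets correspond is the diagonalisability of $\F_p$-linear representations of $V$ over $k$.
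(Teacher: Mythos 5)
Your proposal is correct and follows essentially the same strategy as the paper: identify $CH^*_G(X \times GL(n)/S)$ with $CH^*_S\bigl((X \times GL(n))/G\bigr)$, match the fixed-point decompositions of $(GL(n)/S)^\rho$ on the $G$-side and of $(X')^\sigma$ on the $S$-side via the double-coset schemes of conjugators, and invoke Lemma~\ref{lem:l_{S,X}}. The paper carries out the bookkeeping you flag as the main obstacle by constructing an explicit commutative ladder of $\AA^1$-homotopy equivalences through $EG \times ES$, and settles the smoothness/existence of the quotient $(X \times GL(n))/G$ via GIT using the equivariant ample line bundle hypothesis, but the underlying idea is the one you describe.
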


\begin{proof}
For $\lambda \colon V \rightarrow G$ and $\rho \colon V \to S$, let ${}_\lambda Y$, $Y_\rho$, and ${}_\lambda Y_\rho$ be the functors from $k$-algebras to sets defined by 
\begin{align*}{}_\lambda Y(R) &= \{a \in GL_n(R) \mid a^{-1}(\im(\lambda)_R) a \subseteq S_R\}\\
Y_\rho(R) &= \{a \in GL_n(R) \mid a(\im(\rho)_R) a^{-1} \subseteq G_R\} \text{, and}\\
{}_\lambda Y_\rho(R) &= \{a \in GL_n(R) \mid a^{-1}\lambda a = \rho \colon V_R \to G_R \}.\end{align*}

Each of ${}_\lambda Y$, $Y_\rho$, and ${}_\lambda Y_\rho$ is represented by a closed subscheme of $GL(n)$, by \cite[Proposition 1.79]{Milne17}.  Indeed, in the notation of \cite[Section 1.i]{Milne17}, we have ${}_\lambda Y = T_{GL(n)}(\im(\lambda),S)$, $Y_\rho = T_{GL(n)}(\im(\rho),G)$, and ${}_\lambda Y_\rho = T_{GL(n)}(\lambda,\rho)$.  Here we view $\lambda$ and $\rho$ as closed points of $\underline{\Hom}_{\text{Sch}}(V,GL(n)) \cong GL(n)^{p^{\rk V}}$, and $GL(n)$ acts on $GL(n)$ and $\underline{\Hom}_{\text{Sch}}(V,GL(n))$ by conjugation.

Note that $(GL(n)/S)^\lambda = {}_\lambda Y/S$, $(G\backslash GL(n))^\rho = G\backslash Y_\rho$, ${}_\lambda Y = \coprod_{\rho \colon V \rightarrow S} {}_\lambda Y_\rho$, and $Y_\rho = \coprod_{\lambda \colon V \rightarrow G} {}_\lambda Y_\rho$.

Letting $H = GL(n)$, $C_\lambda = C_G(\lambda)$, and $R = \Rep(V,G)$, we have the following commutative diagram.
\begin{center}
\begin{tikzcd}
\displaystyle\coprod_{\lambda \in R}C_\lambda\backslash\left(BV \times EC_\lambda \times X^\lambda \times {}_\lambda Y \right)/S \arrow[r,"f_{G,X \times H/S}"] & G\backslash\left(EG \times X \times H\right)/S \\
\displaystyle\coprod_{\lambda \in R}C_\lambda\backslash (BV \times EC_\lambda \times ES \times X^{\lambda} \times {}_\lambda Y)/S \arrow[r,"\varphi"] \arrow[u,""] \arrow[d,""]& G\backslash (EG \times ES \times X \times H)/S \arrow[u,""] \arrow[d,""]\\
\displaystyle\coprod_{\lambda \in R} C_\lambda\backslash (BV \times ES \times X^\lambda \times {}_\lambda Y)/S \arrow[d,"\theta"] & G\backslash (ES \times X \times H)/S \\
\displaystyle\coprod_{\substack{\rho \colon V \rightarrow S\\ \lambda \colon V \rightarrow G}} G\backslash (BV \times ES \times X^\lambda \times {}_\lambda Y_{\rho})/S \arrow[ur,swap,"f_{S,(X \times H)/G}"] & 
\end{tikzcd}
\end{center}
The unlabeled vertical maps are all $\AA^1$-homotopy equivalences induced by projection.    On $BV \times EC_\lambda \times ES \times X^\lambda \times {}_\lambda Y_\rho$, $\varphi$ is induced by inclusion and the map $V \times C_\lambda \times S \rightarrow G \times S$, $(v,h,s) \mapsto (\lambda(v)h, \rho(v)s)$.  The map $\theta$ is an isomorphism induced by the identity maps $BV \times ES \times X^\lambda \times {}_\lambda Y_\rho \rightarrow BV \times ES \times X^\lambda \times {}_\lambda Y_\rho$.

Since $X$ has an equivariant ample line bundle, the quotient $(X \times H)/G$ exists as a scheme by geometric invariant theory.  It is smooth because $X$ is smooth, by \cite[\href{https://stacks.math.columbia.edu/tag/05B5}{Lemma 05B5}]{SP}. Thus $\ell_{S,(X \times H)/G}$ is an isomorphism by Lemma \ref{lem:l_{S,X}}, and it follows that $\ell_{G,X \times H/S}$ is an isomorphism. 
\end{proof}

\begin{lem}\label{lem:chow-ring-of-x-times-gln-mod-s}
Let $\rho$ be an $m$-dimensional representation of $G$.  Let $G$ act on $GL(m)/S$ by multiplication, where $S\subseteq GL(m)$ is the subgroup of diagonal matrices of order 1 or $p$.  Then we have
$$CH^*_G(X \times GL(m)/S) \cong CH^*_G(X) \otimes_{CH^*_{GL(m)}} CH^*_{S},$$
where $CH^*_G(X)$ is a $CH^*_{GL(m)}$-module via the composition 
$$CH^*_{GL(m)} \xrightarrow{(B \rho)^*} CH^*_G \rightarrow CH^*_G(X).$$
\end{lem}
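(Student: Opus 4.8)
The plan is to realize $CH^*_G(X \times GL(m)/S)$ as the Chow ring of a space built from the frame bundle of $E_\rho$, and to compute it by reducing from $S$ to the diagonal torus $T \subseteq GL(m)$ containing $S$, where the splitting principle applies. Write $B = EG \times_G X$, let $E_\rho \to B$ be the rank-$m$ vector bundle associated to $\rho$, and let $P \to B$ be its frame bundle, a principal $GL(m)$-bundle. A direct inspection of the quotient constructions identifies $EG \times_G(X \times GL(m)/S)$ with $P/S$, where $S$ acts on $P$ through $S \subseteq GL(m)$; likewise $EG \times_G(X \times GL(m)/T) \cong P/T = \Fl(E_\rho)$, the full flag bundle of $E_\rho$. (As usual, these identifications are carried out on the finite-dimensional approximations.) So it suffices to compute $CH^*(P/S)$ and compare it with $CH^*(P/T)$.

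The key point is that $P/S \to P/T$ is a torsor under $T/S \cong (\mathbb{G}_m)^m$, obtained from the $T$-torsor $P \to P/T$ by extension of structure group along the quotient $T \to T/S$, which under the identification $T/S \cong (\mathbb{G}_m)^m$ is the coordinatewise $p$-th power map. Concretely, $P/S$ is the complement of the coordinate zero-sections in $\bigoplus_{i=1}^m M_i$, where $M_i$ is a line bundle on $\Fl(E_\rho)$ with $c_1(M_i) = p\,x_i$, the $x_1,\dots,x_m$ being the Chern roots of $E_\rho$. Removing the zero-sections one at a time and applying the localization sequence (which is exact with $\F_p$-coefficients) yields that $CH^*(P/S)$ is the quotient of $CH^*(P/T)$ by the ideal generated by $c_1(M_1),\dots,c_1(M_m)$. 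Since each $c_1(M_i) = p\,x_i$ vanishes modulo $p$, the pullback $CH^*(P/T) \to CH^*(P/S)$ is an isomorphism --- modulo $p$, the Chow ring of a $GL(m)/S$-bundle agrees with that of the corresponding $GL(m)/T$-bundle.

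It remains to identify $CH^*(\Fl(E_\rho))$. The iterated projective bundle formula gives $CH^*(\Fl(E_\rho)) \cong CH^*(B) \otimes_{CH^*_{GL(m)}} CH^*_T$, where $CH^*_{GL(m)} = \F_p[c_1,\dots,c_m] \to CH^*(B)$ is the composite $CH^*_{GL(m)} \xrightarrow{(B\rho)^*} CH^*_G \to CH^*_G X$ and $CH^*_{GL(m)} \hookrightarrow CH^*_T = \F_p[x_1,\dots,x_m]$ sends $c_j$ to the $j$-th elementary symmetric polynomial; the tensor-product form is legitimate because $CH^*_T$ is free over $CH^*_{GL(m)}$. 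The restriction map $CH^*_T \to CH^*_S$ attached to $S \hookrightarrow T$ is an isomorphism of $CH^*_{GL(m)}$-algebras (after regrading it is the identity $\F_p[x_i] \xrightarrow{\sim} \F_p[y_i]$, compatible with the two copies of the elementary symmetric polynomials), so composing all the isomorphisms produces $CH^*_G(X \times GL(m)/S) \cong CH^*_G(X) \otimes_{CH^*_{GL(m)}} CH^*_S$ with the stated module structure. Finally one checks that this composite coincides with the natural comparison map, which holds because every isomorphism used above is the evident natural one.

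I expect the crux to be the middle step: pinning down $P/S \to P/T$ precisely as the complement of coordinate zero-sections with Euler classes $p\,x_i$, and then exploiting divisibility by $p$ to collapse the localization sequences. This is exactly where the integrally nontrivial difference between $GL(m)/S$ and $GL(m)/T$ disappears modulo $p$, so it is the step that makes the lemma true; the quotient identifications and the naturality check are routine, though the passage to finite-dimensional approximations should be handled with some care.
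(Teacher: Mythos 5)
Your proof is correct and follows essentially the same route as the paper's: reduce to the flag bundle $\Fl(E_\rho)$ via $\AA^1$-invariance, compute it by the projective bundle formula, and observe that $P/S \to P/T$ is a $(\mathbb{G}_m)^m$-torsor whose associated line bundles have Euler classes $p\,x_i$, hence vanish modulo $p$. (One minor imprecision: $P/T$ is not literally $\Fl(E_\rho) = P/B$, but only an affine bundle over it; since you invoke $\AA^1$-invariance anyway this is harmless.)
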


\begin{proof}
This is a straightforward generalization of the proof of \cite[Lemma 6.6]{Totaro14}, which establishes the lemma in the case that $G$ is finite and $X$ is a point.  First we consider the Chow ring of $Y := G\backslash (EG \times X \times GL(m)/T)$, where $T$ is the subgroup of diagonal matrices in $GL(m)$.  We have that $Y$ is the total space of a $GL(m)/T$-bundle over $X//G := G\backslash(EG \times X)$ given by the pullback of the bundle $BT$ over $BGL(m)$ by the map $X//G \rightarrow BG \rightarrow BGL(m)$.  By $\AA^1$-homotopy invariance, such a $GL(m)/T$-bundle has the same Chow ring as a $GL(m)/B$-bundle, where $B$ is the subgroup of upper triangular matrices in $GL(m)$. This is the same as a flag bundle $\Fl(E)$, where $E$ is the vector bundle corresponding to the given map $X//G \rightarrow BGL(m)$.  The Chow ring of $\Fl(E)$ was computed for a vector bundle over a smooth scheme in, for example, the proof of \cite[Theorem 2.13]{Totaro14}.  We have 
$$CH^*(Y) \cong CH^*(\Fl(E)) \cong CH^*_G(X)[y_1,\ldots,y_n]/(e_i(y_1,\ldots,y_n) = c_i),$$
where $|y_i| = 1$, $e_i$ is the $i$th elementary symmetric function, and $c_i$ is the $i$th Chern class of $E$.

Next, $G\backslash (EG \times X \times GL(m)/S)$ is a $(G_m)^n$-bundle over $Y$ corresponding to the $p$th powers of the line bundles $L_1, \ldots, L_n$ induced by the map $Y \rightarrow BT$.  Thus
\begin{align*}
	CH^*_G(X \times GL(m)/S) &\cong CH^*_G(X)[y_1,\ldots,y_n]/(e_i(y_1,\ldots, y_n) - c_i, c_1(L_j^p)) \\
	&\cong CH^*_G(X)[y_1,\ldots,y_n]/(e_i(y_1,\ldots, y_n) - c_i), 
\end{align*}
where the second isomorphism follows from the fact that $c_1(L_j^p) = pc_1(L_j) = 0$.  Since $CH^*_{GL(m)} \cong \F_p[c_1,\ldots,c_n]$, the right hand side is isomorphic to $CH_G(X) \otimes_{CH^*_{GL(m)}}CH^*_S$. 
\end{proof}

We will need the following lemma in this section and again in Section \ref{sec:d0andd1}.

\begin{lem}\label{lem:quillen-exact-sequence}
With the set up of Lemma \ref{lem:chow-ring-of-x-times-gln-mod-s}, the coequalizer diagram of $G$-schemes
\[\begin{tikzcd}
X \times GL(m)/S \times GL(m)/S \arrow[r,shift left = .75ex,"p_1"] \arrow[r,shift right = .75ex, swap, "p_2"] & X \times GL(m)/S \arrow[r,"p"] & X
\end{tikzcd}\]
induces an exact sequence
\begin{equation*}
0 \rightarrow CH^*_G(X) \xrightarrow{p^*} CH^*_G(X \times GL(m)/S)  \xrightarrow{p_1^* - p_2^*} CH^*_G(X \times GL(m)/S \times GL(m)/S)
\end{equation*}
\end{lem}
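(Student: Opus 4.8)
The strategy is to identify the displayed sequence as coming from a flat descent / Čech-type resolution associated to the map $p\colon X \times GL(m)/S \to X$, which admits sections étale-locally (since $GL(m)/S \to \Spec k$ does), and then to use the explicit computation of the Chow rings from Lemma~\ref{lem:chow-ring-of-x-times-gln-mod-s} to verify exactness by a direct algebraic argument. Concretely, write $A = CH^*_G(X)$ and $R = CH^*_{GL(m)} \cong \F_p[c_1,\dots,c_n]$, and recall from Lemma~\ref{lem:chow-ring-of-x-times-gln-mod-s} that $CH^*_G(X \times GL(m)/S) \cong A \otimes_R CH^*_S$ and (applying the same lemma with $X$ replaced by $X \times GL(m)/S$, or directly) $CH^*_G(X \times GL(m)/S \times GL(m)/S) \cong A \otimes_R CH^*_S \otimes_R CH^*_S$. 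Under these identifications $p^*$ is $a \mapsto a \otimes 1$ and $p_1^* - p_2^*$ is $a \otimes b \mapsto a \otimes b \otimes 1 - a \otimes 1 \otimes b$, so the claim reduces to the purely algebraic statement that
\[
0 \to A \xrightarrow{a \mapsto a\otimes 1} A \otimes_R CH^*_S \xrightarrow{\;a\otimes b \,\mapsto\, a\otimes b\otimes 1 - a\otimes 1\otimes b\;} A \otimes_R CH^*_S \otimes_R CH^*_S
\]
is exact.

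**Key steps.** First I would make the identifications above precise, checking compatibility of the isomorphisms of Lemma~\ref{lem:chow-ring-of-x-times-gln-mod-s} with the projection and diagonal maps $p$, $p_1$, $p_2$; this is bookkeeping with the flag-bundle presentation $A[y_1,\dots,y_n]/(e_i(y) - c_i, c_1(L_j^p))$ used in that proof, where one $GL(m)/S$ factor contributes the variables $y_j$ and the second contributes a further copy $y_j'$ with relations $e_i(y') - c_i$, $c_1((L_j')^p)$. Second, I would reduce to an algebra fact about $R \to CH^*_S$: since $CH^*_S \cong R[y_1,\dots,y_n]/(e_i(y) - c_i)$ is a free $R$-module of finite rank (a basis being the monomials $y_1^{a_1}\cdots y_n^{a_n}$ with $a_j \le n - j$, as in the splitting-principle description of $CH^*_{GL(m)/S}$-cohomology of a point—indeed $CH^*_S$ is the Chow ring of $GL(m)/S$ over $k$ up to the harmless $c_1(L_j^p)=0$ killing, i.e. it is free over $R$ of rank $n!\cdot p^n / \text{(something)}$; the precise rank is irrelevant, only freeness matters), the functor $A \otimes_R (-)$ is exact, and $CH^*_S$ is a faithfully flat $R$-algebra. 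Third, exactness of the displayed three-term sequence is then the standard Amitsur-complex fact: for a faithfully flat ring map $R \to B$ the augmented Čech complex $N \to N \otimes_R B \to N \otimes_R B \otimes_R B \to \cdots$ is exact for any $R$-module $N$; applying this with $N = A$ and $B = CH^*_S$ gives exactness at the first two spots, which is exactly what is claimed. I would cite this as a standard fact (e.g. via \cite{SP}) rather than reprove faithfully flat descent.

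**Main obstacle.** The genuinely delicate point is \emph{faithful flatness} of $R = CH^*_{GL(m)} \to CH^*_S$. Flatness is clear because $CH^*_S$ is a free $R$-module (from the flag-bundle computation: the relations $e_i(y) = c_i$ express the $c_i$ as a regular sequence and the quotient is free on the Schubert-type monomial basis). Faithfulness requires that $\Spec CH^*_S \to \Spec CH^*_{GL(m)}$ be surjective on spectra — equivalently that $CH^*_S$ is a nonzero free module in the appropriate graded sense, or that the composite includes $R$ as a direct summand. This should follow because the monomial basis contains $1$, so $R \hookrightarrow CH^*_S$ is split injective as $R$-modules, giving faithful flatness directly; alternatively, geometrically, $GL(m)/S \to \Spec k$ is faithfully flat and base change preserves this, so $X \times GL(m)/S \to X$ is faithfully flat and the descent sequence for Chow groups along a faithfully flat morphism with the relevant finiteness is exact — but invoking Chow-group descent along $p$ directly is exactly the folklore one is trying to avoid overusing, so I would prefer the algebraic route through Lemma~\ref{lem:chow-ring-of-x-times-gln-mod-s}. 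A secondary nuisance is that everything is graded and $A$ may be infinitely generated in each degree; however, the Amitsur-complex exactness argument is degreewise and only uses freeness of $CH^*_S$ over $R$ as a graded module, so the infinite generation of $A$ causes no trouble.
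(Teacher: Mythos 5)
Your proposal is correct and follows essentially the same route as the paper: apply Lemma \ref{lem:chow-ring-of-x-times-gln-mod-s} to rewrite the sequence as the truncated Amitsur complex $0 \to A \to A \otimes_R CH^*_S \to A \otimes_R CH^*_S \otimes_R CH^*_S$ with $R = CH^*_{GL(m)}$, then invoke exactness for faithfully flat ring maps. The paper simply cites faithfully flat descent \cite[\href{https://stacks.math.columbia.edu/tag/03OA}{Lemma 03OA}]{SP} after asserting $CH^*_S$ is faithfully flat over $R$; your extra step of exhibiting $CH^*_S \cong \F_p[y_1,\ldots,y_n]$ as a free $R$-module with $1$ in a monomial basis is the natural way to justify that assertion, and the rest matches.
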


\begin{proof}
By Lemma \ref{lem:chow-ring-of-x-times-gln-mod-s}, the exact sequence becomes
\begin{equation*}
0 \rightarrow CH^*_GX \xrightarrow{p^*} CH^*_GX \otimes_{CH^*_{H}} CH^*_S \xrightarrow{p_1^* - p_2^*} CH^*_GX \otimes_{CH^*_{H}} CH^*_S \otimes_{CH^*_{H}} CH^*_S
\end{equation*}
Since $CH^*_S$ is a faithfully-flat $CH^*_{H}$-module, this is exact by faithfully flat descent \cite[\href{https://stacks.math.columbia.edu/tag/03OA}{Lemma 03OA}]{SP}.
\end{proof}

\begin{proof}[Proof of Theorem \ref{thm:l_{G,X}}]
Let $H = GL(n)$, $K = GL(n)/S$, and $C_\lambda = C_G(\lambda)$.  We have a coequalizer diagram of $G$-schemes
\[\begin{tikzcd}
X \times K \times K \arrow[r,shift left = .75ex,"p_1"] \arrow[r,shift right = .75ex, swap, "p_2"] & X \times K \arrow[r,"p"] & X
\end{tikzcd}\]
This gives a commutative diagram
\[\begin{tikzcd}
0 \arrow[r,""] & F_1(X) \arrow[r,"p^*"] \arrow[d,"\ell_{G,X}"] & F_1(X \times K) \arrow[r,"p_1^* - p_2^*"] \arrow[d,"\ell_{G,X \times K}"] & F_1(X \times K \times K) \arrow[d,"\ell_{G,X \times K \times K}"]\\
0 \arrow[r,""] & F_2(X) \arrow[r,"p^*"] & F_2(X \times K) \arrow[r,"p_1^* - p_2^*"] & F_2(X \times K \times K)
\end{tikzcd}\]
We will show that the rows are exact.  The two rightmost vertical maps are isomorphisms by Lemma \ref{lem:l_{G,XxGL(n)/S}}, so this will finish the proof.  The top row is $T_V$ applied to the sequence
\begin{equation*}
0 \rightarrow CH^*_G(X) \xrightarrow{p^*} CH^*_G(X \times K)  \xrightarrow{p_1^* - p_2^*} CH^*_G(X \times K \times K),
\end{equation*}
which is exact by Lemma \ref{lem:quillen-exact-sequence}.  Thus the top row is exact by the exactness of $T_V$. 

The bottom row is the sequence
\begin{equation*}
0 \rightarrow \prod_{\lambda} CH^*_{C_\lambda}X^\lambda \xrightarrow{p^*} \prod_{\lambda} CH^*_{C_\lambda}(X \times K)^\lambda \xrightarrow{p_1^* - p_2^*} \prod_{\lambda} CH^*_{C_\lambda}(X \times K \times K)^\lambda.
\end{equation*}
Using the notation from the proof of Lemma \ref{lem:l_{G,XxGL(n)/S}}, $K^{\lambda} = \coprod_{\rho} {}_\lambda Y_\rho/S$, where $\rho$ ranges over all group homomorphisms $V \rightarrow S$.  Thus it suffices to show that for a given $\rho \colon V \rightarrow S$ and $\lambda \colon V \rightarrow G$,
\begin{equation*}
0 \rightarrow CH^*_{C_\lambda}X^\lambda \xrightarrow{p^*} CH^*_{C_\lambda}(X^\lambda \times {}_\lambda Y_\rho/S) \xrightarrow{p_1^* - p_2^*} CH^*_{C_\lambda}(X^\lambda \times {}_\lambda Y_\rho/S \times {}_\lambda Y_\rho/S)
\end{equation*}
is exact.  For a matrix $a \in {}_\lambda Y_\rho$, left multiplication by $a^{-1}$ defines an isomorphism ${}_\lambda Y_\rho \xrightarrow{\cong} C_{H}(\im(\rho)) \cong GL(n-k) \times G_m^k$, where $k = \rk(\im(\rho))$.  Since $k$ contains the $p$th roots of unity, $G_m/(\Z/p) \cong G_m$, and --- using that $G_m$ satisfies the Chow K{\"u}nneth property and that $CH^*G_m$ is trivial --- we have
$$CH^*_{C_{H}(\lambda)}(X^\lambda \times {}_\lambda Y_\rho/S) \cong CH^*_{C_{H}(\lambda)}(X^\lambda \times GL(n-k)/S'),$$
and
$$CH^*_{C_{H}(\lambda)}(X^\lambda \times {}_\lambda Y_\rho/S \times {}_\lambda Y_\rho/S) \cong CH^*_{C_{H}(\lambda)}(X^\lambda \times GL(n-k)/S' \times GL(n-k)/S'),$$
where $S = S' \times \im(\rho)$.  Thus the sequence is exact by Lemma \ref{lem:quillen-exact-sequence}.
\end{proof}

\section{Localization away from \texorpdfstring{$n$}{n}-nilpotent modules}\label{sec:localization}
In this section, we compute the localization of $CH^*_G$ away from $\Nil_n = \Nil_{2n}^{\top} \cap \U$ (Theorem \ref{thm:localization-away-from-nil-p}).  One consequence of this result is a generalization of the results of Yagita and Totaro on an algebro-geometric analog of Quillen's stratification theorem (Corollary \ref{cor:quillens-theorem}).

For $\I$ a set of injective objects in an abelian category $\A$, let $\N(\I)$ be the subcategory of $\A$ comprising the objects $A \in \A$ such that $\Hom_\A(A,I) = 0$ for all $I \in \I$.  We say that $A \in \A$ is \emph{$\N(\I)$-closed} if whenever $f \colon B \rightarrow C$ is a morphism with $\ker(f)$ and $\coker(f)$ in $\N(\I)$, the induced map 
$$\Hom_{\A}(C,A) \rightarrow \Hom_{\A}(B,A)$$
is an isomorphism.  Equivalently, $A$ is $\N(\I)$-closed if $\Ext^i_\A(N,A) = 0$ for all $N \in \N(\I)$ and $i = 0,1$.

Subcategories of the form $\N(\I)$ are always Serre subcategories.  We say that $\N(\I)$ is \emph{localizing} if the functor $r \colon \A \rightarrow \A/\N(\I)$ has a right adjoint $s$.  In this case, the localization functor $L = s \circ r$ and unit $\lambda \colon \id_{\A} \rightarrow L$ are characterized up to natural equivalence by the facts that for every $A \in \A$, the kernel and cokernel of $\lambda \colon A \rightarrow LA$ are in $\N(\I)$ and $LA$ is $\N(\I)$-closed.

Note that the inclusion functor $\U \rightarrow \U^{\top}$ has a right adjoint $\tilde{\mO}$:  for $M \in \U^{\top}$, $\tilde{\mO} M$ is the largest submodule of $M$ concentrated in even degrees.

\begin{lem}\label{lem:n-prime-nilpotents}
  Let $M \in \U$, and let $n\ge 0$ be an integer.  The following are equivalent.
   \begin{enumerate}[a)]
    \item $M \in \Nil_n$
    \item The operations $\Sq_a$ (for $p = 2$) or $P_a$ (for $p > 2$) are locally nilpotent for all $0 \le a < n$.
    \item $\Hom_{\U}(M,CH^*_V \otimes \tilde{\mO}J(2k)) = 0$ for all $V$ and for all $0\le k < n$
    \item $\Hom_{\U}(M,CH^*_V \otimes N) = 0$ for all $V$ and for all $N\in \U$ concentrated in degrees at most $n - 1$.
    \item $(T_VM)^i = 0$ for all $V$ and for all $i < n$.
   \end{enumerate}
\end{lem}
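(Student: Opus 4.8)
The plan is to reduce the lemma to Proposition~\ref{prop:n-nilpotents}, applied to $M$ regarded as an object of $\U^{\top}$ but with parameter $2n$ in place of $n$, via the defining relation $\Nil_n = \Nil_{2n}^{\top} \cap \U$. For $M \in \U$ that relation says that (a) holds if and only if $M \in \Nil_{2n}^{\top}$, and Proposition~\ref{prop:n-nilpotents} supplies several equivalent reformulations of the latter --- notably $(T_V^{\top}M)^{j} = 0$ for all $V$ and all $j < 2n$, and local nilpotence of the operations $\Sq_a$ (for $p = 2$) or $P_a$ (for $p > 2$) for all $0 \le a < 2n$. I would (i) identify these two reformulations with conditions (e) and (b) above by translating along the regrading that identifies $\U$-degree $i$ with $\U^{\top}$-degree $2i$, and then (ii) deduce the equivalences (c) $\Leftrightarrow$ (d) $\Leftrightarrow$ (e) by a short argument entirely inside $\U$.

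For (a) $\Leftrightarrow$ (e): by Proposition~\ref{prop:n-nilpotents}(e) (parameter $2n$), $M \in \Nil_{2n}^{\top}$ if and only if $(T_V^{\top}M)^{j} = 0$ for all $V$ and all $j < 2n$. By Lemma~\ref{lem:compare-tv-and-tv-prime}, $T_V^{\top}M \cong T_VM$, which lies in $\U$ and is hence concentrated in even degrees; so $(T_V^{\top}M)^{j}$ vanishes automatically for odd $j$, while $(T_V^{\top}M)^{2i}$ is the degree-$i$ part of $T_VM$ after regrading. The condition therefore becomes $(T_VM)^{i} = 0$ for all $V$ and all $i < n$, which is (e). For (a) $\Leftrightarrow$ (b): by Proposition~\ref{prop:n-nilpotents}(b) (parameter $2n$), $M \in \Nil_{2n}^{\top}$ if and only if $\Sq_a$ (for $p = 2$) or $P_a$ (for $p > 2$) is locally nilpotent for every $0 \le a < 2n$. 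On a module concentrated in even degrees the odd-indexed operations contribute nothing: for $p = 2$, $\Sq_{2j+1}x = \Sq^{|x|-2j-1}x$ lands in an odd degree and so vanishes, while for $p > 2$ one has $P_{2j+1} = P_{2j}$ identically. Hence this family of conditions collapses to the subfamily over even indices $a = 2j$, $0 \le j < n$, and under the regrading the operations $\Sq_{2j}$ (resp.\ $P_{2j}$) are precisely the operations $\Sq_j$ (resp.\ $P_j$) on $M \in \U$ appearing in (b). (For $p = 2$ one may alternatively appeal to the equivalence $\U \simeq \U^{\top}$ of Section~\ref{sec:chow-rings-as-unstable-modules}.)

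It remains to establish (c) $\Leftrightarrow$ (d) $\Leftrightarrow$ (e) without leaving $\U$. Since $\tilde{\mO}$ is right adjoint to the inclusion $\U \hookrightarrow \U^{\top}$ and $J(2k)$ represents $N \mapsto (N^{2k})^{\vee}$ on $\U^{\top}$, the module $\tilde{\mO}J(2k)$ represents $M \mapsto (M^{k})^{\vee}$ on $\U$, where we use the regrading to identify the degree-$2k$ part of $M$ in $\U^{\top}$ with its degree-$k$ part $M^k$ in $\U$. Combining this with the adjunction $T_V \dashv (-\otimes CH^*_V)$ yields a natural isomorphism
$$\Hom_{\U}\big(M,\, CH^*_V \otimes \tilde{\mO}J(2k)\big) \;\cong\; \Hom_{\U}\big(T_VM,\, \tilde{\mO}J(2k)\big) \;\cong\; \big((T_VM)^{k}\big)^{\vee},$$
so (c) holds if and only if $(T_VM)^{k} = 0$ for all $V$ and all $k < n$, i.e.\ if and only if (e) holds. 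Next, (d) $\Rightarrow$ (c): the Brown--Gitler module $J(m)$ is concentrated in degrees $\le m$, so $\tilde{\mO}J(2k)$ with $k < n$ is concentrated in degrees $\le n-1$ and is therefore among the modules $N$ allowed in (d). Finally, (e) $\Rightarrow$ (d): if $(T_VM)^{i} = 0$ for all $i < n$ and $N \in \U$ is concentrated in degrees $\le n-1$, then any degree-preserving map $T_VM \to N$ vanishes (its source is zero below degree $n$, its target at and above), and $\Hom_{\U}(M, CH^*_V \otimes N) \cong \Hom_{\U}(T_VM, N)$. This closes the chain of implications.

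No step here is genuinely hard once Proposition~\ref{prop:n-nilpotents}, Lemma~\ref{lem:compare-tv-and-tv-prime}, and the adjunction defining $T_V$ are in hand. The point that needs the most care is the index bookkeeping in (b): one must check that, for a module concentrated in even degrees, imposing local nilpotence for all $0 \le a < 2n$ is no stronger than imposing it for even $a$, so that it matches the range $0 \le a < n$ after regrading --- which is exactly the vanishing and coincidence of odd-indexed operations noted above.
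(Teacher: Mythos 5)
Your proof is correct. For the equivalences (a) $\Leftrightarrow$ (b) and (a) $\Leftrightarrow$ (e), you follow the same route as the paper (Proposition~\ref{prop:n-nilpotents} at parameter $2n$ plus Lemma~\ref{lem:compare-tv-and-tv-prime}), just with the index-bookkeeping for the even-degree collapse spelled out --- which is the point worth the most care and you get it right. Where you genuinely diverge is in handling (c) and (d): the paper invokes the identity $\tilde{\mO}(H^*_V \otimes N) = CH^*_V \otimes \tilde{\mO}N$ from \cite[Proposition 8.3]{LZ86} to match (c) and (d) directly against Proposition~\ref{prop:n-nilpotents}(c),(d), whereas you stay inside $\U$: you observe via the $\tilde{\mO}$-adjunction that $\tilde{\mO}J(2k)$ represents $M \mapsto (M^k)^\vee$ on $\U$, then combine with the $T_V \dashv (-\otimes CH^*_V)$ adjunction to get (c) $\Leftrightarrow$ (e) directly, and close the loop (e) $\Rightarrow$ (d) $\Rightarrow$ (c) by degree considerations. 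This buys you independence from the \cite{LZ86} citation and a cleaner derivation chain through (e); the paper's route is shorter on the page because it offloads the work to the cited proposition, but it does leave implicit a small range-of-$N$ discrepancy between Proposition~\ref{prop:n-nilpotents}(d) (all $N \in \U^{\top}$ in degrees $\le 2n-1$) and your condition (d) (only $N \in \U$ in degrees $\le n-1$), which your chain through the $J(2k)$'s handles explicitly. Either proof is fine to include.
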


\begin{proof}
The implications $(a) \Leftrightarrow (b)$ and $(a) \Leftrightarrow (e)$ follow immediately from Proposition \ref{prop:n-nilpotents} and Lemma \ref{lem:compare-tv-and-tv-prime}.  The implications $(a) \Leftrightarrow (c)$ and $(a) \Leftrightarrow (d)$ follow from Proposition \ref{prop:n-nilpotents} and the fact that, by \cite[Proposition 8.3]{LZ86}, $\tilde{\mO}(H^*_V \otimes N) = CH^*_V \otimes \tilde{\mO}N$ for all $N \in \U^{\top}$.
\end{proof}

The unstable $\A$-modules $H^*_V \otimes J(2k) \in \U^{\top}$ are injective in $\U^{\top}$ for all $k$ and for all $V \in \E$, so the unstable $\A$-modules $CH^*_V \otimes \tilde{\mO} J(2k) = \tilde{\mO}(H^*_V \otimes J(2k))$ are injective in $\U$.  Let $\I_n$ be the set of objects $CH^*_V \otimes \tilde{\mO}J(2k)$ in $\U$ for $V \in \E$ and $0 \le k < n$.  By Lemma \ref{lem:n-prime-nilpotents}, $\N(\I_n) = \Nil_n$.

\begin{lem}\label{lem:nil-prime-are-localizing}
The subcategories $\Nil_n$ are localizing for all $n$.
\end{lem}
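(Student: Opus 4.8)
The plan is to show that each $\Nil_n \subseteq \U$ is localizing by exhibiting a right adjoint to the quotient functor $r \colon \U \to \U/\Nil_n$, using the already-established identification $\Nil_n = \N(\I_n)$ where $\I_n$ consists of the injective objects $CH^*_V \otimes \tilde{\mO}J(2k)$ for $V \in \E$ and $0 \le k < n$. The standard machinery (e.g.\ Gabriel localization, or the criterion for $\N(\I)$-localizing subcategories used implicitly in \cite{HLS93}) says that $\N(\I)$ is localizing as soon as every object of $\U$ admits an $\N(\I)$-closed ``envelope'' — concretely, it suffices to construct, for each $M \in \U$, a map $\lambda \colon M \to LM$ with kernel and cokernel in $\Nil_n$ and with $LM$ being $\Nil_n$-closed. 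Recall $LM$ being $\Nil_n$-closed is equivalent to $\Ext^i_\U(N, LM) = 0$ for all $N \in \Nil_n$ and $i = 0,1$.

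First I would transfer the corresponding fact from $\U^{\top}$. Henn–Lannes–Schwartz prove that $\Nil_n^{\top}$ is localizing in $\U^{\top}$ (this is the foundational input making $L_n^{\top}$ defined in the introduction); equivalently $\N(\I_n^{\top})$ is localizing where $\I_n^{\top} = \{H^*_V \otimes J(2k) : V \in \E,\ 0 \le k < n\}$ together with the odd Brown–Gitler twists — in fact, since we only care about the even part, the relevant injectives are exactly the $H^*_V \otimes J(k)$. I would then push this through the adjunction $\bar{\mO} \colon \U^{\top} \rightleftarrows \U : \iota$ and its right-adjoint counterpart $\iota \colon \U \rightleftarrows \U^{\top} : \tilde{\mO}$. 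The key compatibility is Lemma \ref{lem:n-prime-nilpotents} (via \cite[Proposition 8.3]{LZ86}): $\tilde{\mO}$ carries the injectives $H^*_V \otimes J(2k)$ to the injectives $CH^*_V \otimes \tilde{\mO}J(2k)$, and $\tilde{\mO}$ sends $\Nil_n^{\top}$-closed objects to $\Nil_n$-closed objects, because $\Hom_\U(N, \tilde{\mO}M) = \Hom_{\U^{\top}}(\iota N, M)$ and $\iota$ sends $\Nil_n$ into $\Nil_n^{\top}$ (an $n$th suspension concentrated in even degrees is, after forgetting the regrading, a $2n$th suspension in $\U^{\top}$, hence lies in $\Nil_{2n}^{\top}$); the $\Ext^1$ statement follows from exactness of $\iota$ and the fact that $\tilde{\mO}$ is a right adjoint, hence left exact, combined with injectivity arguments. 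Concretely, for $M \in \U$ I would set $L_n M := \tilde{\mO}(L_n^{\top}(\iota M))$ and $\lambda := \tilde{\mO}(\lambda^{\top}_{\iota M}) \colon M \to L_n M$, using that $\tilde{\mO}\iota = \id_\U$.

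The main obstacle — and the step deserving the most care — is verifying that the kernel and cokernel of this $\lambda$ land in $\Nil_n$. The kernel is easy: $\ker(\lambda) = \tilde{\mO}(\ker \lambda^{\top}_{\iota M})$ since $\tilde{\mO}$ is left exact, and $\ker\lambda^{\top}_{\iota M} \in \Nil_n^{\top}$, so $\tilde{\mO}$ of it lies in $\Nil_n$ (closure of $n$-nilpotence under submodules, plus that $\tilde{\mO}$ preserves the even-degree characterization (b) of Lemma \ref{lem:n-prime-nilpotents}). The cokernel is subtler because $\tilde{\mO}$ is not right exact: applying $\tilde{\mO}$ to $0 \to \ker \to \iota M \to L_n^{\top}\iota M \to \coker \to 0$ only gives left-exactness, so I cannot immediately identify $\coker(\lambda)$ with $\tilde{\mO}(\coker\lambda^{\top})$. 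The fix is to work in degrees $\ge 2n$: the functor $T_V$ is exact (Lannes), so applying $T_V$ to $\lambda$ and using characterization (e) of Lemma \ref{lem:n-prime-nilpotents}, it suffices to check that $T_V(\lambda)$ is an isomorphism in degrees $\ge n$ (equivalently $T_V(\coker\lambda)^i = T_V(\ker\lambda)^i = 0$ for $i < n$), and this can be read off from the already-known behavior of $T_V^{\top}(\lambda^{\top})$ together with Lemma \ref{lem:compare-tv-and-tv-prime} identifying $T_V$ with $T_V^{\top}$ on $\U$. Finally, $\Nil_n$-closedness of $L_n M$ I would deduce from $\Nil_n^{\top}$-closedness of $L_n^{\top}\iota M$ by the adjunction $\Hom_\U(N, \tilde{\mO}(-)) \cong \Hom_{\U^{\top}}(\iota N, -)$ for the $\Hom$ vanishing, and for the $\Ext^1$ vanishing by a dimension-shift: embed $N \in \Nil_n$ into an injective $I$ of $\U$ (which, since $\U$ has enough injectives and injectives of $\U$ are of the form $\tilde{\mO}$ of injectives of $\U^{\top}$, stays controlled), note $I \in \Nil_n$ as well is false in general — so instead I would use that $\Ext^1_\U(N, L_nM)$ injects into $\Ext^1_{\U^{\top}}(\iota N, L_n^{\top}\iota M)$ via the exact adjoint pair $(\iota, \tilde{\mO})$ with $\iota$ exact, which vanishes. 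Assembling these, $(L_n, \lambda)$ has the defining properties of localization away from $\Nil_n$, so $\Nil_n$ is localizing.
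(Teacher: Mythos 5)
Your route is genuinely different from the paper's, and worth contrasting. The paper does not construct the section functor at all: it observes that $\Nil_n = \N(\I_n)$ is a Serre subcategory, that $\U$ has injective hulls, and that every $M \in \U$ has a largest subobject lying in $\Nil_n$ (both inherited from $\U^{\top}$ and $\Nil_{2n}^{\top}$ by applying $\tilde{\mO}$), and then invokes Gabriel's Corollaire III.3.1 to conclude that $r_n \colon \U \rightarrow \U/\Nil_n$ has a right adjoint. Your plan --- transfer the localization from $\U^{\top}$ through the adjoint pair $(\iota,\tilde{\mO})$ and verify the three defining properties of $\lambda$ by hand --- can be made to work and is more explicit, but two things need repair. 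First, a notational slip: since the paper sets $\Nil_n = \Nil_{2n}^{\top} \cap \U$, the topological localization you should be pushing through $\tilde{\mO}$ is $L_{2n}^{\top}$, not $L_n^{\top}$. Second, the cokernel step as written is incomplete; ``read off from the already-known behaviour of $T_V^{\top}(\lambda^{\top})$'' skips the one nontrivial input, which is that $\im(\lambda^{\top})$ is a quotient of $\iota M$ and therefore concentrated in even degrees, hence contained in $\tilde{\mO}(L_{2n}^{\top}\iota M)$. In fact, once you notice this, the entire $T_V$-detour and the worry about $\tilde{\mO}$ failing to be right exact evaporate: $\ker\lambda = \ker\lambda^{\top}$ is a submodule of the even-concentrated $\iota M$, hence lies in $\Nil_{2n}^{\top}\cap\U = \Nil_n$; and applying the left exact $\tilde{\mO}$ to $0 \to \im\lambda^{\top} \to L_{2n}^{\top}\iota M \to \coker\lambda^{\top} \to 0$ shows $\coker\lambda$ injects into $\tilde{\mO}(\coker\lambda^{\top})$, which again lies in $\Nil_{2n}^{\top}\cap\U = \Nil_n$. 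Your treatment of $\Nil_n$-closedness via the low-degree Grothendieck spectral sequence for $(\iota,\tilde{\mO})$ is correct, though the abandoned detour through injective envelopes should just be cut. In short: the paper trades all of this for a single citation to Gabriel; your approach is longer but exhibits the localization concretely, and with the two fixes above it goes through.
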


\begin{proof}
Since $\Nil_n = \N(\I_n)$, $\Nil_n$ is a Serre subcategory.  Moreover, $\U$ has injective hulls, and every object $M \in \U$ has a subobject maximal among all subobjects in $\Nil_n$, because the same things are true for $\U^{\top}$ and $\Nil_{2n}^{\top}$ --- the injective hulls and maximal subobjects in $\U$ are given by applying $\tilde{\mO}$ to the respective constructions in $\U^{\top}$.  Thus by \cite[Corollaire III.3.1]{Gabriel62}, $r_n \colon \U \rightarrow \U/\Nil_n$ has a right adjoint $s_n$.
\end{proof}

\begin{lem}\label{lem:basic-niln-closed}
\begin{enumerate}[a)]
\item A limit of $\Nil_n$-closed unstable modules is $\Nil_n$-closed.
\item If $0 \rightarrow M_1 \rightarrow M_2 \rightarrow M_3 \rightarrow 0$ is an exact sequence in $\U$ and $M_1$ and $M_3$ or $M_2$ and $M_3$ are $\Nil_n$-closed, the third module is as well.
\item For all $N \in \U$ concentrated in degrees at most $n-1$ and $V \in \E$, $CH^*_V \otimes N$ is $\Nil_n$-closed. 
\end{enumerate}
\end{lem}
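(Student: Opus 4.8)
The plan is to use the two equivalent descriptions of $\Nil_n$-closedness recorded just above the statement: that $A\in\U$ is $\Nil_n$-closed iff $\Ext^i_{\U}(M,A)=0$ for every $M\in\Nil_n$ and $i=0,1$, and equivalently iff for every $f\colon B\to C$ with $\ker f$ and $\coker f$ in $\Nil_n$ the map $f^*\colon\Hom_{\U}(C,A)\to\Hom_{\U}(B,A)$ is an isomorphism.

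For (a) I would use the morphism description, since $\Ext^1$ does not commute with limits. Given $A=\lim_j A_j$ with each $A_j$ being $\Nil_n$-closed and an $f\colon B\to C$ with $\ker f$ and $\coker f$ in $\Nil_n$, one has $\Hom_{\U}(C,A)\cong\lim_j\Hom_{\U}(C,A_j)$ and $\Hom_{\U}(B,A)\cong\lim_j\Hom_{\U}(B,A_j)$ (representable functors preserve limits), and under these isomorphisms $f^*$ is the limit of the isomorphisms $\Hom_{\U}(C,A_j)\to\Hom_{\U}(B,A_j)$, hence an isomorphism. For (b) I would use the $\Ext$ description and the long exact sequence obtained by applying $\Ext^*_{\U}(M,-)$, for $M\in\Nil_n$, to $0\to M_1\to M_2\to M_3\to 0$: if $M_1,M_3$ are $\Nil_n$-closed then $\Hom_{\U}(M,M_i)$ and $\Ext^1_{\U}(M,M_i)$ vanish for $i=1,3$, which pins both $\Hom_{\U}(M,M_2)$ and $\Ext^1_{\U}(M,M_2)$ between zeros; if $M_2,M_3$ are $\Nil_n$-closed then $\Hom_{\U}(M,M_1)\hookrightarrow\Hom_{\U}(M,M_2)=0$ and $\Ext^1_{\U}(M,M_1)$ is trapped between $\Hom_{\U}(M,M_3)=0$ and $\Ext^1_{\U}(M,M_2)=0$. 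Both of these are routine diagram chases.

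The substance is in (c). Fix $V\in\E$, an $N\in\U$ concentrated in degrees $\le n-1$, and an $M\in\Nil_n$; I need $\Ext^i_{\U}(M,CH^*_V\otimes N)=0$ for $i=0,1$. The key remark is that $CH^*_V\otimes-$ preserves injective objects, being right adjoint to the exact functor $T_V$; and it is itself exact, since $CH^*_V$ is $\F_p$-free and exactness in $\U$ is detected on underlying graded vector spaces. Thus $CH^*_V\otimes-$ carries an injective resolution $0\to N\to I^\bullet$ in $\U$ to an injective resolution $0\to CH^*_V\otimes N\to CH^*_V\otimes I^\bullet$, and applying $\Hom_{\U}(M,-)$ together with the adjunction isomorphism $\Hom_{\U}(M,CH^*_V\otimes-)\cong\Hom_{\U}(T_VM,-)$ identifies $\Ext^i_{\U}(M,CH^*_V\otimes N)$ with $\Ext^i_{\U}(T_VM,N)$. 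By Lemma~\ref{lem:n-prime-nilpotents}(e), $T_VM$ is concentrated in degrees $\ge n$, so it suffices to show that $\Ext^i_{\U}(M',N)=0$ for $i=0,1$ whenever $M'$ lives in degrees $\ge n$ and $N$ in degrees $\le n-1$. The $\Hom$ case is immediate on degree grounds, and for $\Ext^1$ I would check that any extension $0\to N\to E\to M'\to 0$ in $\U$ splits: $E\to M'$ is a degreewise isomorphism in every degree $\ge n$ (as $N$ vanishes there), its inverse is a graded-linear section $s$, and $s$ is $\A$-linear because for homogeneous $\theta\in\A$ of positive degree and $\bar x\in M'$, both $\theta s(\bar x)$ and $s(\theta\bar x)$ sit in a degree $\ge n+1$, where $E\to M'$ is injective, and they have equal images.

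The main obstacle is recognizing the reduction in (c): that exactness of $T_V$ lets one transport the computation of $\Ext_{\U}$ across the adjunction, turning the problem into an extension problem between the highly connected module $T_VM$ and the bounded module $N$, which then splits trivially for degree reasons. Parts (a) and (b) are formal consequences of the two characterizations of $\Nil_n$-closedness.
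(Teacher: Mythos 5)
Your proofs of (a) and (b) are exactly the kind of routine diagram-chase the paper elides with ``straightforward,'' and they are correct. For (c) you take a genuinely different (and more self-contained) route than the paper. The paper's hint is to reduce to the injective objects $CH^*_V \otimes \tilde{\mO}J(2k)$ for $k<n$: these are $\Nil_n$-closed (they are injective and lie in $\I_n$), a bounded-above $N$ admits an injective resolution by products of such truncated Brown--Gitler modules, and then tensoring with $CH^*_V$ and applying part (b) -- or more precisely, the vanishing of $\Hom$ against each term of the resolution -- gives the result. You instead transport the $\Ext$ computation across the adjunction: since $T_V$ is exact, $CH^*_V\otimes-$ is exact and preserves injectives, so $\Ext^i_{\U}(M,CH^*_V\otimes N)\cong\Ext^i_{\U}(T_VM,N)$; then Lemma~\ref{lem:n-prime-nilpotents}(e) places $T_VM$ in degrees $\ge n$, $N$ lives in degrees $\le n-1$, and the $\Hom$ and $\Ext^1$ vanishings become elementary degree-counting (your splitting argument for the extension is correct and carefully checked). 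Your approach buys independence from the structural facts about Brown--Gitler injectives and works directly with $T_VM$; the paper's approach stays closer to the standard nil-localization toolkit and feeds more naturally into Lemma~\ref{lem:nil-prime-are-localizing}. Both are valid; yours is arguably cleaner to write in full.
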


\begin{proof}
Parts (a) and (b) are straightforward.  Part (c) follows from (b) and the fact that $CH^*_V \otimes \tilde{\mO}J(2k)$ is $\Nil_{k+1}$-closed for all $k$ and for all $V \in \E$.
\end{proof}

The groupoid of maps from a smooth scheme $Y$ into the quotient stack $[X/G]$ is the category of diagrams of $G$-schemes of the form
$$\begin{tikzcd}
E \arrow[r] \arrow[d] & X\\
Y &
\end{tikzcd}$$
where $G$ acts freely on $E$ and trivially on $Y$, and $E \to Y$ is a principal $G$-bundle.  Given such a diagram and a map of smooth schemes $f \colon Z \to Y$, we have the diagram
$$\begin{tikzcd}
E \times_Y Z \arrow[r] \arrow[d] & X\\
Z &
\end{tikzcd}$$
inducing a functor $f^* \colon [X/G](Y) \to [X/G](Z)$.  Let $A^*_GX$ denote the ring of characteristic classes for these diagrams valued in equivariant Chow rings.  That is, $A^*_GX$ is the ring of assignments $\alpha \colon [X/G](Y) \to CH^*_GY$ that are natural with respect to the pullback of diagrams described above.

The following lemma is a straightforward generalization of \cite[Theorem 1.3]{Totaro99}.
\begin{lem}\label{lem:characteristic-class-interpretation}
The natural map $\varphi \colon A^*_GX \to CH^*_GX$ that sends $\alpha \in A^*_GX$ to
$$\alpha\left(\begin{tikzcd}
EG \times X \arrow[r] \arrow[d] & X\\
EG \times_G X &
\end{tikzcd}\right)$$
is an isomorphism.
\end{lem}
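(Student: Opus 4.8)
The plan is to adapt Totaro's proof of \cite[Theorem 1.3]{Totaro99}, which handles the case $X = \Spec(k)$, using the same finite-dimensional approximation scheme that underlies the definition of $CH^*_GX$. First I would fix, as in Section \ref{sec:chow-rings-as-unstable-modules}, faithful representations $V_i$ of $G$ with open subsets $U_i \subseteq V_i$ on which $G$ acts freely and whose complements have codimension more than $i$, chosen compatibly so that $U_i \hookrightarrow U_{i+1}$. The diagram
$$\begin{tikzcd}
U_i \times X \arrow[r] \arrow[d] & X\\
U_i \times_G X &
\end{tikzcd}$$
is an object of $[X/G](U_i \times_G X)$, so every $\alpha \in A^*_GX$ produces a class $\alpha_i \in CH^*(U_i \times_G X) = CH^*_GX$ (in the degrees where the approximation is valid); naturality of $\alpha$ with respect to the open inclusions $U_i \times_G X \hookrightarrow U_{i+1} \times_G X$ shows these are compatible and hence define a well-defined element $\varphi(\alpha) \in CH^*_GX$, independent of the chosen approximations by the usual argument \cite[Sections 2.2--2.3]{EG98}. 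This gives the map $\varphi$; one checks it is a ring homomorphism directly from the definition.

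For injectivity, suppose $\varphi(\alpha) = 0$. Given an arbitrary diagram $E \to X$ over $Y$ with $E \to Y$ a principal $G$-bundle, I would observe that $Y$ is covered (in the sense needed for Chow groups up to a fixed codimension) by pullbacks of the universal situation: since $E$ is a principal $G$-bundle, locally it is classified by a map $Y' \to U_i/G$ for $Y'$ an appropriate approximation/stratification of $Y$, so that the diagram over $Y'$ is pulled back from the one over $U_i \times_G X$. By naturality, $\alpha$ evaluated on the diagram over $Y'$ equals the pullback of $\alpha_i = \varphi(\alpha)|_{\text{deg} \le \text{codim bound}} = 0$; since this holds for all sufficiently good approximations and the Chow groups $CH^j$ are computed correctly in the limit, $\alpha$ is zero on the original diagram. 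Hence $\alpha = 0$.

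For surjectivity, given $c \in CH^*_GX$, say $c \in CH^j_GX = CH^j(U_i \times_G X)$ for $i$ large, I would define, for any diagram $E \to X$ over $Y$ classified (through the given $i$) by $g \colon Y \to U_i/G$ together with the $G$-equivariant map $E \to X$, the class $\alpha(E\to X \text{ over } Y) := g^*(\bar c) \in CH^j_GY$, where $\bar c$ is the image of $c$ under the map $CH^j(U_i \times_G X) \to CH^j(U_i \times_G X)$ induced by $E \times_Y (U_i \times X) \to U_i \times X$; more precisely one uses that $E \to Y$ and $U_i \to U_i/G$ give a map $E \times_G X \to U_i \times_G X$ (after replacing $U_i$ by a larger approximation if needed so that the classifying map lands in the free locus) and pulls $c$ back along it. One must check this is independent of the choice of $i$ (compatibility of the approximations, plus the fact that $CH^j$ stabilizes) and natural in $Y$ (functoriality of Chow pullback), which gives $\alpha \in A^*_GX$ with $\varphi(\alpha) = c$ by construction.

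The main obstacle is the surjectivity step: making precise that an arbitrary principal $G$-bundle $E \to Y$, together with its equivariant map to $X$, is "pulled back from the universal approximation" in a way compatible across the tower $\{U_i\}$, and that this is independent of all choices. In the point case this is Totaro's bundle-theoretic argument; here the extra data of the $G$-map $E \to X$ must be carried along, and one has to be slightly careful that the classifying map $Y \to B_{gm}G$ can be taken to land in a finite approximation $U_i/G$ for which the fiber product with $X$ still makes sense and computes $CH^j$ correctly --- this is exactly the kind of bookkeeping that the phrase "straightforward generalization" is meant to cover, and it works because all the constructions involved (open inclusions, vector bundle projections, Chow pullbacks) are already known to be compatible with the approximation scheme defining $CH^*_GX$.
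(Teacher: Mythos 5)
Your plan --- extend Totaro's argument for $X = \pt$ using the approximations $U_i$ --- is the right starting point, but the central construction is misidentified and the argument as written does not go through. The value of a characteristic class on a diagram $(E \to X,\ E \to Y)$ must lie in $CH^*(Y)$; to produce it from $c \in CH^*_GX$ you propose a classifying map $Y \to U_i/G$ (equivalently a $G$-equivariant map $E \to U_i$), but neither exists in general: a principal $G$-bundle $E \to Y$ need not be Zariski-locally trivial, and it is not pulled back from any finite approximation $U_i/G$. Relatedly, ``$\bar{c}$ is the image of $c$ under the map $CH^j(U_i \times_G X) \to CH^j(U_i \times_G X)$'' has identical source and target, and the proposed map ``$E \times_G X \to U_i \times_G X$'' would require the nonexistent equivariant map $E \to U_i$ and would in any case land in $CH^*(E \times_G X)$ rather than $CH^*(Y)$. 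Your injectivity paragraph has the same issue: covering or stratifying $Y$ does not help, since Chow groups do not satisfy \'etale descent, and there is no ``local classifying map.''

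The device that makes both directions work --- and the one the paper's proof uses --- is to replace $Y$ not by a cover but by the $\AA^1$-equivalent space $EG \times_G E$ (concretely $U_i \times_G E$). This space admits the two maps you need: the projection $EG \times_G E \to E/G = Y$, which induces an isomorphism on $CH^j$ in the relevant range because $U_i \times_G E \to E/G$ factors as an open immersion with high-codimension complement followed by a vector bundle projection; and the map $EG \times_G E \to EG \times_G X$ induced by the $G$-equivariant map $E \to X$. Pulling $c$ back along the second map and transporting along the first isomorphism defines $\psi(c)$ on the given diagram. This $\psi$ is the inverse to $\varphi$: naturality in $Y$ and the identities $\psi \circ \varphi = \id$ and $\varphi \circ \psi = \id$ are then routine checks. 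Once this replacement is in hand, your separate injectivity and surjectivity steps collapse into the single verification that $\psi$ inverts $\varphi$, which is exactly how the paper proceeds; without it, the ``bookkeeping'' you defer to in the final paragraph is where the proof actually lives, and it does not succeed via classifying maps into finite approximations.
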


\begin{proof}
Given a diagram of $G$-schemes
$$\begin{tikzcd}
Y \arrow[d] \arrow[r] & X\\
Y/G
\end{tikzcd}$$
we have induced maps
$$Y/G \leftarrow EG \times_G Y \rightarrow EG \times_G X.$$
The first map is an $\AA^1$-homotopy equivalence, so these maps induce a map $CH^*_GX \to CH^*Y/G$, giving a map $\psi \colon CH^*_GX \to A^*_GX$.  It is straightforward to check that $\psi$ defines an inverse to $\varphi$.
\end{proof}

\begin{lem}\label{lem:induces-identity}
Let $h \in G$.  Let $G$ act on $EG \times X$ by $g\cdot(e,x) = (ge,gx)$ and on $EG \times EG \times X$ by $g\cdot (e_1,e_2,x) = (ge_1,hgh^{-1}e_2,gx)$.  Let $\pi_1,\pi_2 \colon (EG \times EG \times X)/G \to EG \times_G X$ be the maps given by $\pi_1(\overline{(e_1,e_2,x)}) = \overline{(e_1,x)}$ and $\pi_2(\overline{(e_1,e_2,x)}) = \overline{(e_2,hx)}$.  Then $\psi := \pi_2^*\circ (\pi_1^*)^{-1} \colon CH^*_GX \to CH^*_GX$ is the identity.
\end{lem}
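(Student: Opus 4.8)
The plan is to deduce the statement from the characteristic‑class description of equivariant Chow rings in Lemma~\ref{lem:characteristic-class-interpretation}. Write $W = EG \times EG \times X$ with the given ($h$‑twisted) $G$‑action and $B = W/G$. First I would observe that $\pi_1$ and $\pi_2$ are finite‑dimensional models of $\AA^1$‑homotopy equivalences: each is, up to an automorphism of the $EG$ being forgotten, the quotient by $G$ of a projection $EG \times EG \times X \to EG \times X$, hence a fiber bundle whose fiber $U_j$ sits inside a representation $V_j$ with complement of arbitrarily large codimension; comparing with the associated vector bundle and using $\AA^1$‑invariance shows $\pi_1^*$ and $\pi_2^*$ are isomorphisms (this is in any case implicit in the statement). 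Thus $\psi = \pi_2^* \circ (\pi_1^*)^{-1}$, and it suffices to prove $\pi_1^* = \pi_2^*$ as maps $CH^*_GX \to CH^*B$.

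To prove $\pi_1^* = \pi_2^*$, I would use that, via the isomorphism $\varphi \colon A^*_GX \xrightarrow{\sim} CH^*_GX$ of Lemma~\ref{lem:characteristic-class-interpretation}, every class in $CH^*_GX$ has the form $\varphi(\alpha) = \alpha(\mathcal{D})$ for some $\alpha \in A^*_GX$, where $\mathcal{D}$ is the universal diagram $[EG \times X \to X,\ EG \times_G X]$. By naturality of characteristic classes under pullback of diagrams, $\pi_i^*\varphi(\alpha) = \alpha(\pi_i^*\mathcal{D})$, so it is enough to show that $\pi_1^*\mathcal{D}$ and $\pi_2^*\mathcal{D}$ are isomorphic objects of $[X/G](B)$. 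For this I would exhibit $G$‑equivariant lifts of the two projections: the maps $\widetilde{\pi}_1(e_1,e_2,x) = (e_1,x)$ and $\widetilde{\pi}_2(e_1,e_2,x) = (h^{-1}e_2,x)$ from $W$ to $EG \times X$ are each equivariant for the twisted action on $W$ and the diagonal action on $EG \times X$, and each covers the corresponding $\pi_i$. A routine check then identifies the pullback principal bundle $\pi_i^*(EG \times X)$ with $W$ itself, via $w \mapsto (\overline{w}, \widetilde{\pi}_i(w))$, and under this identification the induced equivariant map to $X$ becomes $\mathrm{pr}_X \circ \widetilde{\pi}_i$, which equals the projection $(e_1,e_2,x) \mapsto x$ for \emph{both} $i = 1$ and $i = 2$. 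Hence $\pi_1^*\mathcal{D}$ and $\pi_2^*\mathcal{D}$ agree, so $\pi_1^*\varphi(\alpha) = \pi_2^*\varphi(\alpha)$ for all $\alpha$; since $\varphi$ is surjective this gives $\pi_1^* = \pi_2^*$, whence $\psi = \pi_2^* \circ (\pi_1^*)^{-1} = \mathrm{id}$.

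The main thing to watch is the bookkeeping of the twist by $h$: one must verify that $\pi_2$ is well defined (using $\overline{(e_2,hx)} = \overline{(h^{-1}e_2,x)}$ and that the twisted action matches the diagonal one after conjugation by $h$), that $\widetilde{\pi}_2$ is genuinely equivariant, and — the only real point — that the $h$ is absorbed entirely into the principal‑bundle part of the diagram and leaves no trace on the map to $X$. (Alternatively, one can first apply the $G$‑equivariant isomorphism $(e_1,e_2,x) \mapsto (e_1, h^{-1}e_2, x)$ intertwining the twisted action with the standard diagonal action on $W$, reducing to $h = e$, and then run the same argument; the characteristic‑class version, however, handles all $h$ at once.)
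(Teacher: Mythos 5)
Your proposal is correct and is essentially the same argument as the paper's: the paper exhibits the identical $G$-equivariant lifts $f_1(e_1,e_2,x)=(e_1,x)$ and $f_2(e_1,e_2,x)=(h^{-1}e_2,x)$ covering $\pi_1$ and $\pi_2$, notes both squares are pullbacks with the composite to $X$ being projection in each case, and then applies Lemma~\ref{lem:characteristic-class-interpretation} exactly as you do to conclude $\pi_1^*=\pi_2^*$ on characteristic classes.
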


\begin{proof}
We have a commutative diagram of $G$-schemes
$$\begin{tikzcd}
X & \arrow[l] EG \times X \arrow[d] & \arrow[l,swap,"f_1"] EG \times EG \times X \arrow[d] \arrow[r,"f_2"] & EG \times X \arrow[d] \arrow[r] & X\\
&EG \times_G X & \arrow[l,swap,"\pi_1"] (EG \times EG \times X)/G \arrow[r,"\pi_2"] & EG \times_G X &
\end{tikzcd}$$
where $f_1$ and $f_2$ are given by $(e_1,e_2,x) \mapsto (e_1,x)$ and $(e_1,e_2,x) \mapsto (h^{-1}e_2,x)$.  Both squares are pullback squares.

By Lemma \ref{lem:characteristic-class-interpretation}, an element $x \in CH^*_GX$ is equal to $\alpha(EG \times X)$ for some characteristic class $\alpha \in A^*_GX$ (where we suppress the maps to $X$ and $EG \times_G X$ from the notation).  From the diagram above, we have that
$$\psi^*(\alpha(EG\times X)) = (\pi_1^*)^{-1}(\pi_2^*(\alpha(EG\times X))) = \alpha(EG\times X)$$
as desired.
\end{proof}

For $E$ an elementary abelian subgroup of $G$, we have a map $CH^*_GX \rightarrow CH^*_E \otimes (CH^*_{C_G(E)}X^E)^{<n}$ given by composing the map $CH^*_GX \rightarrow CH^*_E \otimes CH^*_{C_G(E)}X^E$ induced by $E \times C_G(E) \rightarrow G$, $(e,g) \mapsto eg$ with the quotient map $CH^*_E \otimes CH^*_{C_G(E)}X^E \rightarrow CH^*_E \otimes (CH^*_{C_G(E)}X^E)^{<n}$.  For elementary abelian subgroups $E_1$ and $E_2$ of $G$ and $h \in G$ such that $hE_1h^{-1} \subseteq E_2$, we have a conjugation map $c_h \colon E_1 \rightarrow E_2$, $e \mapsto heh^{-1}$.  The group homomorphism 
\begin{align*} E_1 \times E_1 \times C_G(E_2) &\rightarrow E_1 \times C_G(E_1)\\
(e_1,e_2,g) &\mapsto (e_1e_2,h^{-1}gh),\end{align*}
together with the map $X^{E_2}\to X^{E_1}$, $x\mapsto h^{-1}x$ induces a map
$$\phi_1 \colon EE_1 \times EE_1 \times EC_G(E_2) \times_{C_G(E_2)} X^{E_2} \to EE_1 \times EC_G(E_1)\times_{C_G(E_1)} X^{E_1},$$
and the group homomorphism
\begin{align*} E_1 \times E_1 \times C_G(E_2) &\rightarrow E_2 \times C_G(E_2)\\
(e_1,e_2,g) &\mapsto (he_1h^{-1},he_2h^{-1}g).\end{align*}
together with the identity map $X^{E_2} \to X^{E_2}$ induces a map
$$\phi_2 \colon EE_1 \times EE_1 \times EC_G(E_2) \times_{C_G(E_2)} X^{E_2} \to EE_2 \times EC_G(E_2)\times_{C_G(E_2)} X^{E_2}.$$
By Lemma \ref{lem:induces-identity}, the compositions of the maps induced by $\phi_1$ and $\phi_2$ with the map $CH^*_GX \rightarrow CH^*_E \otimes (CH^*_{C_G(E)}X^E)^{<n}$ give the same composite map
$$CH^*_GX \rightarrow CH^*_{E_i} \otimes (CH^*_{C_G(E_i)}X^{E_i})^{<n} \rightarrow CH^*_{E_1} \otimes (CH^*_{E_1} \otimes CH^*_{C_G(E_2)}X^{E_2})^{<n}.$$

\begin{thm}\label{thm:localization-away-from-nil-p}
Let $\lambda_n$ be the map from $CH^*_GX$ to
$$\begin{tikzcd}
\Eq \colon \displaystyle\prod_{E}CH^*_E \otimes CH^{<n}_{C_G(E)}X^E \arrow[r,shift left = .75ex,""] \arrow[r,shift right = .75ex, swap, ""]& \displaystyle\prod_{c_h \colon E_1 \rightarrow E_2} CH^*_{E_1} \otimes (CH^*_{E_1} \otimes CH^*_{C_G(E_2)}X^{E_2})^{<n}
\end{tikzcd}$$
induced by the maps above.  Then $\lambda_n$ is localization away from $\Nil_n$.
\end{thm}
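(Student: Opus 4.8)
The plan is to verify the two defining properties of the localization away from $\Nil_n$ directly for the map $\lambda_n\colon CH^*_GX \to \Eq$: that $\Eq$ is $\Nil_n$-closed, and that $\ker(\lambda_n)$ and $\coker(\lambda_n)$ lie in $\Nil_n$. By the uniqueness characterization of the localization functor recalled at the start of this section, these two facts together say exactly that $\lambda_n$ realizes localization away from $\Nil_n$.

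The first property is immediate: $\Eq$ is by construction an equalizer, hence a limit, of copies of the modules $CH^*_E \otimes CH^{<n}_{C_G(E)}X^E$ and $CH^*_{E_1}\otimes\bigl(CH^*_{E_1}\otimes CH^*_{C_G(E_2)}X^{E_2}\bigr)^{<n}$. Each of these is of the form $CH^*_V \otimes N$ with $V\in\E$ and $N\in\U$ concentrated in degrees at most $n-1$, hence is $\Nil_n$-closed by Lemma \ref{lem:basic-niln-closed}(c); by Lemma \ref{lem:basic-niln-closed}(a), $\Eq$ is $\Nil_n$-closed.

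For the second property I would apply $T_V$ for each $V\in\E$. Since $T_V$ is exact, $T_V\ker(\lambda_n)=\ker(T_V\lambda_n)$ and $T_V\coker(\lambda_n)=\coker(T_V\lambda_n)$, so by the equivalence (a)$\Leftrightarrow$(e) in Lemma \ref{lem:n-prime-nilpotents} it suffices to show $T_V\lambda_n$ is an isomorphism in degrees $<n$ for every $V$. The source is computed by Theorem \ref{thm:l_{G,X}}: $T_VCH^*_GX \cong \prod_{[\rho]\in\Rep(V,G)}CH^*_{C_G(\rho)}X^\rho$. For the target I would use that the Quillen category of $G$ (elementary abelian $p$-subgroups with morphisms induced by inclusion and conjugation) has, up to isomorphism, only finitely many objects and morphisms — the algebro-geometric analog of Quillen's finiteness statement, which one should establish separately and which is available since $G$ is a linear algebraic group — so that all products in $\Eq$ are finite. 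Being additive and exact, $T_V$ then commutes with the equalizer and with these finite products. On the two building blocks, since $CH^{<n}_{C_G(E)}X^E$ and $\bigl(CH^*_{E_1}\otimes CH^*_{C_G(E_2)}X^{E_2}\bigr)^{<n}$ are bounded above, Lemma \ref{lem:tv-for-bounded-above} together with the tensor formula for $T_V$ and Lemma \ref{lem:l_{S,*}} gives $T_V(CH^*_E\otimes N) \cong \bigl(\prod_{\phi\colon V\to E}CH^*_E\bigr)\otimes N \cong \prod_{\phi\colon V\to E}(CH^*_E\otimes N)$, and similarly for the double terms. Thus $T_V\Eq$ is again an equalizer of finite products, now reindexed over pairs $(E,\phi\colon V\to E)$.

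The remaining — and principal — step is to identify this equalizer, in degrees $<n$, with $\prod_{[\rho]\in\Rep(V,G)}CH^*_{C_G(\rho)}X^\rho$, and to check that $T_V\lambda_n$ becomes the identity under the identification. The combinatorial core is that conjugacy classes of pairs $(E,\phi\colon V\to E)$ are in bijection with $\Rep(V,G)$ via $(E,\phi)\mapsto[\phi]$, with $E=\im\rho$ providing a smallest representative in each class; the equalizer condition forces a compatible family to be locally constant on these classes, collapsing the limit to a single factor $(CH^*_{C_G(\rho)}X^\rho)^{<n}$ per $[\rho]$. Making this precise is where the bulk of the work lies: one must unwind how $T_V$ interacts with the maps $f_{G,X}$ defining $\lambda_n$, keep track of the centralizers $C_G(\rho)$ and the Euler-class bookkeeping of Lemma \ref{lem:exact-sequences} on fixed loci, and verify that the "double" terms $CH^*_{E_1}\otimes(CH^*_{E_1}\otimes\cdots)^{<n}$ — present precisely because of the situation of Lemma \ref{lem:induces-identity} — give the equalizer the right shape. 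This is the algebro-geometric counterpart of the computation of $L_n^{\top}(H^*_G)$ in \cite{HLS93}. As an alternative organization that sidesteps the finiteness input for general $G$, one could first prove the theorem for $G$ a finite elementary abelian group, where the Quillen category is simply the poset of subgroups, and then reduce the general case to this one using the coequalizer presentation $X\times GL(n)/S\times GL(n)/S \rightrightarrows X\times GL(n)/S \to X$ and faithfully flat descent, exactly as in the proof of Theorem \ref{thm:l_{G,X}}.
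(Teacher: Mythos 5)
Your framework matches the paper's: establish that $\Eq$ is $\Nil_n$-closed via Lemma \ref{lem:basic-niln-closed}, then reduce (via Lemma \ref{lem:n-prime-nilpotents}(e) and exactness of $T_V$) to showing $T_V(\lambda_n)$ is an isomorphism in degrees $<n$, then compute the source via Theorem \ref{thm:l_{G,X}} and the target via the formulas for $T_V$ on $CH^*_E\otimes N$. However, you leave the decisive step undone. You correctly state that the combinatorial core is a bijection between conjugacy classes of pairs $(E,\phi\colon V\to E)$ and $\Rep(V,G)$, and then write ``making this precise is where the bulk of the work lies.'' That work is the bulk of the paper's proof: after reducing to the untruncated sequence, the paper proves injectivity of the comparison map $\varphi\colon\prod_{[\rho]}CH^*_{C_G(\rho)}X^\rho\to\prod_{E,\rho'}CH^*_E\otimes CH^*_{C_G(E)}X^E$ by exhibiting a retraction through $C_G(\rho)\to\im(\rho)\times C_G(\rho)\to C_G(\rho)$, and proves $\im(\varphi)\supseteq\ker(\varphi_1-\varphi_2)$ by a delicate element argument: write $x_{E,\rho'}=\sum_i y_{E,\rho',i}\otimes z_{E,\rho',k-i}$, compose with the coaugmentation map coming from $1\times E\times C_G(E)\hookrightarrow E\times E\times C_G(E)$ to extract $z_{E,\rho',k}$, and use the equalizer relations indexed by $c_h$ to show $\psi_h^*(z_{\im\rho,\rho,k})=z_{E,\rho',k}$, whence $\varphi((z_{\im\rho,\rho,k}))=(x_{E,\rho'})$. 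Asserting the bijection on index sets is not a substitute for this verification.

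Two smaller remarks. First, your proposed justification for commuting $T_V$ past the products in $\Eq$ — finiteness of the Quillen category — does not directly apply: the products in the statement are indexed by \emph{all} elementary abelian subgroups $E\subseteq G$ (and all $c_h\colon E_1\to E_2$), not by isomorphism classes, and for positive-dimensional $G$ this index set is infinite. One would need to first replace $\Eq$ by an equalizer over a skeleton of the Quillen category before the finiteness helps, or else invoke a general commutation of $T_V$ with products in $\U$. Second, the coequalizer/descent alternative you sketch is the mechanism used for Theorem \ref{thm:l_{G,X}}, where exactness of $T_V$ makes it work cleanly; transporting it to $L_n$ is not immediate since $L_n$ is only left exact, so that route would itself require additional argument. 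The appeal to the ``Euler-class bookkeeping of Lemma \ref{lem:exact-sequences}'' is likewise misplaced here — that lemma enters the proofs of Lemma \ref{lem:l_{S,X}} and Theorem \ref{thm:localization-bounds}, not this one.
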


Note that Theorem \ref{thm:localization-away-from-nil-p} doesn't follow from \cite[Theorem 4.9]{HLS93}, since $CH^*_GX$ need not be finitely generated as a module over $CH^*_G$, and $CH^*_G$ need not be Noetherian.

\begin{proof}
By Lemma \ref{lem:basic-niln-closed}, $CH^*_E \otimes N$ is $\Nil_n$-closed for all $E \in \E$ and $N$ concentrated in degrees less than $n$.  The product of $\Nil_n$-closed unstable modules is $\Nil_n$-closed, as is the kernel of a map between $\Nil_n$-closed unstable modules.  It follows that the codomain of $\lambda_n$ is $\Nil_n$-closed, so it remains to show that $\ker(\lambda_n)$ and $\coker(\lambda_n)$ are in $\Nil_n$.  By Lemma \ref{lem:n-prime-nilpotents}, it is equivalent to show that $T_V(\lambda_n)$ is an isomorphism in degrees less than $n$ for all $V \in \E$.

Applying lemmas \ref{lem:tv-for-bounded-above}, \ref{lem:l_{S,*}}, and \ref{lem:tensor-product-and-isomorphisms}, $T_V(\lambda_n)$ becomes
\begin{align*}&\prod_{[\rho] \in \Rep(V,G)} CH^*_{C_G(\rho)}X^\rho \xrightarrow{\varphi}\\
&\ker\left(\prod_{E,\rho'}CH^*_E \otimes CH^{<n}_{C_G(E)}X^E \xrightarrow{\varphi_1 - \varphi_2} \prod_{c_h,\rho''}CH^*_{E_1} \otimes (CH^*_{E_1} \otimes CH^*_{C_G(E_2)}X^{E_2})^{<n}\right),\end{align*}
where $\rho'$ ranges over all maps $V \rightarrow E$ and $\rho''$ ranges over all maps $V \rightarrow E_1$.  For two group homomorphisms $\rho$ and $\rho' \colon V \rightarrow E$, the map $CH^*_{C_G(\rho)}X^\rho \rightarrow CH^*_E \otimes CH^{<n}_{C_G(E)}X^E$ given by $\varphi$ is trivial if $\rho$ and $\rho'$ are not conjugate.  If $c_h(\rho') = \rho$ for some $h \in G$, then the map is the one induced by 
$$\phi \colon E \times C_G(E) \xrightarrow{\operatorname{mult}} C_G(E) \xrightarrow{c_h} C_G(\rho)$$
and the map $X^E \to X^\rho$ given by $x \mapsto hx$.

For $\rho' \colon V \rightarrow E$, $\rho'' \colon V \rightarrow E_1$, and $\alpha \colon E_1 \rightarrow E_2$, the map $CH^*_E \otimes CH^{<n}_{C_G(E)}X^E \rightarrow CH^*_{E_1} \otimes (CH^*_{E_1} \otimes CH^*_{C_G(E_2)}X^{E_2})^{<n}$ given by $\varphi_1$ is trivial unless $E = E_1$ and $\rho' = \rho''$, in which case it is the map induced by $\phi_1$.  The map given by $\varphi_2$ is trivial unless $E = E_2$ and $c_h(\rho'') = \rho'$, in which case it is the map induced by $\phi_2$.

It suffices to show that
\begin{align*}0 \rightarrow \prod_{[\rho] \in \Rep(V,G)} CH^*_{C_G(\rho)}X^\rho \xrightarrow{\varphi} \prod_{\substack{E\\\rho' \colon V \rightarrow E}}CH^*_E \otimes CH^*_{C_G(E)}X^E\\
\xrightarrow{\varphi_1 - \varphi_2} \prod_{\substack{c_h \colon E_1 \rightarrow E_2\\\rho'' \colon V \rightarrow E_1}}CH^*_{E_1} \otimes CH^*_{E_1} \otimes CH^*_{C_G(E_2)}X^{E_2}\end{align*}
is exact.  We have already seen that $\varphi_1 \circ \varphi = \varphi_2 \circ \varphi$, so it remains to show that $\varphi$ is injective and that $\im(\varphi) \supseteq \ker(\varphi_1 - \varphi_2)$.

Consider the composition
$$\prod_{[\rho] \in \Rep(V,G)} CH^*_{C_G(\rho)}X^\rho \xrightarrow{\varphi} \prod_{\substack{E\\\rho' \colon V \rightarrow E}}CH^*_E \otimes CH^*_{C_G(E)}X^E \rightarrow \prod_{[\rho] \in \Rep(V,G)} CH^*_{C_G(\rho)}X^\rho,$$
where the maps $CH^*_E \otimes CH^*_{C_G(E)}X^E \rightarrow CH^*_{C_G(\rho)}X^\rho$ are trivial unless $\im(\rho) = E$, in which case they are induced by the inclusion $C_G(\rho) \rightarrow \im(\rho) \times C_G(\rho)$ and the map $X^\rho \to X^E$, $x \mapsto h^{-1}x$.  The composition
$$C_G(\rho) \rightarrow \im(\rho) \times C_G(\rho) \xrightarrow{\phi} C_G(\rho)$$
is the identity, so the induced composition on Chow groups is the identity, so $\varphi$ is injective.

Finally, let $(x_{E,\rho'}) \in \prod CH^*_E \otimes CH^*_{C_G(E)}X^E$ with $|x_{E,\rho'}| = k$ for all $E$ and $\rho'$.  Then
$$x_{E,\rho'} = y_{E,\rho',k} \otimes 1 + y_{E,\rho',k-1} \otimes z_{E,\rho',1} + \ldots + 1 \otimes z_{E,\rho',k}$$
for some $y_{E,\rho',i}$ and $z_{E,\rho',j}$.  We will show that $\varphi((z_{\im(\rho),\rho,k})) = (x_{E,\rho'})$.  For a given $E$ and $\rho'$, consider the composition
$$CH^*_E \otimes CH^*_{C_G(E)}X^E \xrightarrow{\varphi_1 - \varphi_2} CH^*_E \otimes CH^*_E \otimes CH^*_{C_G(E)}X^E \rightarrow CH^*_E \otimes CH^*_{C_G(E)}X^E,$$
where the second map is induced by the identity on $X^E$ and the inclusion 
$$E \times C_G(E) \cong 1 \times E \times C_G(E) \rightarrow E \times E \times C_G(E).$$
Since $(x_{E,\rho'}) \in \ker(\varphi_1 - \varphi_2)$, the first map sends $x_{E,\rho'}$ to 0.  On the other hand, the composition sends $x_{E,\rho'}$ to $x_{E,\rho'} - \varphi'(z_{E,\rho',k})$, where $\varphi' \colon CH^*_{C_G(E)}X^E \rightarrow CH^*E \otimes CH^*_{C_G(E)}X^E$ is the map induced by multiplication.  If $\rho$ is conjugate to $\rho'$ via $c_h$, the map $\varphi \colon CH^*_{C_G(\rho)}X^\rho \rightarrow CH^*_E \otimes CH^*_{C_G(E)}X^E$ factors as $\varphi' \circ \psi_h$, where $\psi_h \colon CH^*_{C_G(\rho)}X^\rho \to CH^*_{C_G(E)}X^E$ is the map induced by $C_G(\rho) \to C_G(E)$, $g \mapsto hgh^{-1}$ and $X^\rho \to X^E$, $x \mapsto hx$.  Thus it suffices to show that $\psi_h^*(z_{\im(\rho),\rho,k}) = z_{E,\rho',k}$.  Consider the compositions
$$CH^*_{\im(\rho)} \otimes CH^*_{C_G(\rho)}X^\rho \xrightarrow{\varphi_1} CH^*_{\im(\rho)} \otimes CH^*_{\im(\rho)} \otimes CH^*_{C_G(E)}X^E \rightarrow CH^*_{C_G(E)}X^E,$$
and
$$CH^*_{E} \otimes CH^*_{C_G(E)}X^E \xrightarrow{\varphi_2} CH^*_{\im(\rho)} \otimes CH^*_{\im(\rho)} \otimes CH^*_{C_G(E)}X^E \rightarrow CH^*_{C_G(E)}X^E,$$
where the second maps are induced by the inclusion $C_G(E) \rightarrow \im(\rho) \times \im(\rho) \times C_G(E)$ and the identity on $X^E$.  The first composition sends $x_{\im(\rho), \rho}$ to $\psi_h^*(z_{\im(\rho),\rho,k})$, while the second sends $x_{E,\rho'}$ to $z_{E,\rho',k}$.  Since $(x_{E,\rho'}) \in \ker(\varphi_1 - \varphi_2)$, it follows that $\psi_h^*(z_{\im(\rho),\rho,k}) = z_{E,\rho',k}$.
\end{proof}

Taking $n = 1$ in Theorem \ref{thm:localization-away-from-nil-p} gives the following corollaries.

\begin{cor}\label{cor:quillens-theorem}
The map
$$CH^*_G \rightarrow \lim_{E\subseteq G} CH^*_E$$
is an F-isomorphism.  Here the limit ranges over all elementary abelian subgroups $E$ of $G$, with maps given by inclusion and conjugation.
\end{cor}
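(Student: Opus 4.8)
The plan is to specialize Theorem~\ref{thm:localization-away-from-nil-p} to $n=1$ and $X=\Spec(k)$, identify the resulting equalizer with $\lim_{E\subseteq G}CH^*_E$, and then read off an F-isomorphism from the fact that $\lambda_1$ realizes localization away from $\Nil_1$.

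First I would simplify the equalizer. Since $X=\Spec(k)$ forces $X^E=\Spec(k)$ for every $E$, we have $CH^{<1}_{C_G(E)}X^E=CH^0_{C_G(E)}\cong\F_p$ (the schemes approximating $BC_G(E)$ are connected) and likewise $(CH^*_{E_1}\otimes CH^*_{C_G(E_2)}X^{E_2})^{<1}\cong\F_p$, so the equalizer of Theorem~\ref{thm:localization-away-from-nil-p} degenerates to
$$\Eq=\ker\Bigl(\textstyle\prod_{E}CH^*_E\rightrightarrows\prod_{c_h\colon E_1\to E_2}CH^*_{E_1}\Bigr).$$
Then I would unwind the maps $\phi_1$ and $\phi_2$ defined just before Theorem~\ref{thm:localization-away-from-nil-p}: after truncating the relevant tensor factors to degree $0$, the comultiplication $CH^*_{E_1}\to CH^*_{E_1}\otimes CH^*_{E_1}$ appearing inside $\phi_1^*$ collapses (only the $y\otimes 1$ summand survives), so the first of the two parallel arrows is the projection $(x_E)\mapsto(x_{E_1})_{c_h}$, while the second, coming from $\phi_2$, is $(x_E)\mapsto(c_h^*x_{E_2})_{c_h}$, where $c_h^*\colon CH^*_{E_2}\to CH^*_{E_1}$ is the pullback along the conjugation $c_h$. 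Hence $\Eq=\lim_{E\subseteq G}CH^*_E$, the limit over the category of elementary abelian subgroups of $G$ with the conjugation morphisms $c_h$ --- equivalently, the category generated by inclusions and conjugation isomorphisms, as in the statement; and chasing the definitions (the degree-$0$ truncation forces the $C_G(E)$-factor to vanish, i.e.\ restricts $\lambda_1$ along $E\hookrightarrow G$) identifies $\lambda_1$ with the natural restriction map $CH^*_G\to\lim_{E\subseteq G}CH^*_E$.

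Next I would invoke Theorem~\ref{thm:localization-away-from-nil-p} to conclude that the kernel $K$ and cokernel $Q$ of $\lambda_1$, computed in $\U$, both lie in $\Nil_1$. Both $CH^*_G$ and $\lim_{E\subseteq G}CH^*_E$ are objects of $\K$ (the latter being a limit of unstable $\A$-algebras along $\A$-algebra homomorphisms), and on any object of $\K$ the module operation $P_0$ --- or $\Sq_0$ when $p=2$ --- acts as the $p$-th power map, since $P^{|x|}x=x^p$ there. Applying Lemma~\ref{lem:n-prime-nilpotents} with $n=1$: $K\in\Nil_1$ says that $P_0$ is locally nilpotent on $K$, i.e.\ every $x\in\ker(\lambda_1)$ satisfies $x^{p^k}=P_0^k x=0$ for some $k$; and $Q\in\Nil_1$ says that $P_0$ is locally nilpotent on the quotient module $\bigl(\lim_{E\subseteq G}CH^*_E\bigr)/\im(\lambda_1)$, i.e.\ every element $y$ of the codomain satisfies $y^{p^k}\in\im(\lambda_1)$ for some $k$. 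These are exactly the two conditions in the definition of an F-isomorphism, so the corollary follows.

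The main obstacle is the identification $\Eq\cong\lim_{E\subseteq G}CH^*_E$: this is bookkeeping rather than a genuine difficulty, but it demands careful tracking of the various conjugations and centralizers hidden in $\phi_1$ and $\phi_2$, and a check that truncating to degree $0$ annihilates every comultiplication cross-term, leaving precisely the evident projection and the conjugation pullbacks.
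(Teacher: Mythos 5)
Your proposal is correct and follows essentially the same route as the paper's one-line proof, which cites Theorem~\ref{thm:localization-away-from-nil-p}, Lemma~\ref{lem:n-prime-nilpotents}, and the fact that both sides are unstable $\A$-algebras (so that $P_0$, resp.\ $\Sq_0$, is the $p$th power map). You have simply supplied the bookkeeping — identifying the $n=1$ equalizer with $\lim_{E\subseteq G}CH^*_E$ and translating $\Nil_1$-membership of kernel and cokernel into the nilpotent-kernel and $p$th-power-surjective conditions — that the paper leaves implicit.
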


\begin{proof}
This follows from Theorem \ref{thm:localization-away-from-nil-p}, Lemma \ref{lem:n-prime-nilpotents}, and the fact that the domain and codomain are unstable $\A$-algebras.
\end{proof}

\begin{cor}
If $G$ is a linear algebraic group over $\C$, then the cycle map
$$CH^*_G \rightarrow H^*_G$$
is an F-isomorphism.
\end{cor}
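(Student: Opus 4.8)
The plan is to compare the cycle map $f\colon CH^*_G\to H^*_G$ with restriction to elementary abelian subgroups, via the commutative square
$$\begin{tikzcd}
CH^*_G \arrow[r,"a"] \arrow[d,"f"] & \displaystyle\lim_{E \subseteq G} CH^*_E \arrow[d,"c"] \\
H^*_G \arrow[r,"b"] & \displaystyle\lim_{E \subseteq G} H^*_E
\end{tikzcd}$$
where the limits range over elementary abelian $p$-subgroups of $G$ with maps induced by inclusions and conjugations, $a,b$ are restriction maps, $c$ is a limit of cycle maps, and commutativity expresses compatibility of cycle maps with pullbacks. Here $a$ is an F-isomorphism by Corollary \ref{cor:quillens-theorem}, and $b$ is an F-isomorphism by Quillen's stratification theorem \cite[Theorem 6.2]{Quillen71}: the unipotent radical of $G$ has contractible complex points and a reductive group deformation retracts onto a maximal compact subgroup, so $BG(\C)$ has the homotopy type of $BK$ for a maximal compact $K\subseteq G(\C)$, and the Quillen categories of elementary abelian $p$-subgroups of $G$ and of $K$ agree. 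So the real work is to show that $c$ is an F-isomorphism; the statement will then follow by a diagram chase.

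To show $c$ is an F-isomorphism I would first observe that for each $E\in\E$ of rank $k$ the cycle map $CH^*_E\to H^*_E$ is injective: for $p=2$ it is the inclusion $\F_2[y_1,\dots,y_k]\hookrightarrow\F_2[x_1,\dots,x_k]$, $y_i\mapsto x_i^2$, and for $p$ odd it is the inclusion $\F_p[y_1,\dots,y_k]\hookrightarrow\Lambda(x_1,\dots,x_k)\otimes\F_p[y_1,\dots,y_k]$. Hence $c$, a limit of injections, is injective. For surjectivity up to powers I would use that $G$ has a faithful representation of some degree $n$, so every relevant $E$ has rank $k\le n$; fixing $m$ with $p^m>n$, I claim $b^{p^m}$ lies in the image of $CH^*_E\to H^*_E$ for every homogeneous $b\in H^*_E$. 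For $p=2$ this is immediate since $b^2\in\F_2[x_1^2,\dots,x_k^2]$. For $p$ odd, an odd-degree homogeneous $b$ has $b^2=0$, while an even-degree $b$ can be written $b=c_0+\nu$ with $c_0\in\F_p[y_1,\dots,y_k]$ central and $\nu$ in the ideal $(x_1,\dots,x_k)$, which satisfies $\nu^{k+1}=0$; then $b^{p^m}=c_0^{p^m}+\nu^{p^m}=c_0^{p^m}$ since $p^m\ge k+1$. Given a homogeneous compatible family $(b_E)\in\lim_E H^*_E$, each $b_E^{p^m}$ has a unique preimage $\tilde b_E\in CH^*_E$, and uniqueness together with compatibility of cycle maps with restriction and conjugation forces $(\tilde b_E)$ to be compatible; it maps to $(b_E)^{p^m}$. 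Thus $c$ is an F-isomorphism.

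For the diagram chase: since $c$ is injective, $\ker f$ maps to $0$ under $a$, so $\ker f\subseteq\ker a$, which is nilpotent. Given a homogeneous $y\in H^*_G$ — which may be assumed to have even degree, as an odd-degree homogeneous element squares to zero when $p$ is odd — choose $w\in\lim_E CH^*_E$ with $c(w)=b(y)^N$ for a $p$-power $N$ (as $c$ is an F-isomorphism), then $u\in CH^*_G$ with $a(u)=w^{N'}$ for a $p$-power $N'$ (as $a$ is an F-isomorphism). Commutativity gives $b(f(u))=c(w)^{N'}=b(y^{NN'})$, so $f(u)-y^{NN'}\in\ker b$ is nilpotent, say killed by the $p^\ell$-th power; since $f(u)$ and $y^{NN'}$ both lie in even degrees they are central, so $f(u)^{p^\ell}-y^{NN'p^\ell}=(f(u)-y^{NN'})^{p^\ell}=0$, whence $y^{NN'p^\ell}=f(u^{p^\ell})$ lies in the image of $f$. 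Therefore $f$ is an F-isomorphism. The main obstacle is the verification that $c$ is an F-isomorphism, and in particular producing a single power $p^m$ that works for all elementary abelian subgroups at once — precisely where the hypothesis that $G$ is linear algebraic (hence of finite $p$-rank) is used — together with the reduction of the cohomological input to Quillen's theorem for the compact group $K$.
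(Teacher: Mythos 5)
Your proof is correct and takes essentially the same route as the paper, whose proof is a one-line citation of Quillen's stratification theorem, Corollary~\ref{cor:quillens-theorem}, and the fact that each cycle map $CH^*_E \to H^*_E$ is an F-isomorphism. You fill in the details the paper leaves implicit — in particular the uniform $p$-power (using the bound on the $p$-rank coming from a faithful representation) needed to promote the objectwise F-isomorphisms to an F-isomorphism on the limit, and the concluding diagram chase.
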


\begin{proof}
This follows from Quillen's stratification theorem, Corollary \ref{cor:quillens-theorem}, and the fact that the cycle map $CH^*_E \rightarrow H^*_E$ is an F-isomorphism for every $E \in \E$.
\end{proof}
\section{The invariants \texorpdfstring{$d_0(CH^*_G)$}{d0(CH*G)} and \texorpdfstring{$d_1(CH^*_G)$}{d1(CH*G)}}\label{sec:d0andd1}
Let $L_n$ denote localization away from $\Nil_n$.  For $M \in \U$, define $d_0(M)$ and $d_1(M)$ to be the smallest integers $n_0$ and $n_1$ such that the localization map $\lambda_{n_i+1} \colon M \to L_{n_i + 1}M$ is injective and an isomorphism, respectively.  If no such $n_i$ exists, we define $d_i(M) = \infty$.  In this section, we prove Totaro's conjecture regarding $d_0(CH^*_G)$ (Proposition \ref{prop:totaro-conjecture}) and give bounds for $d_0(CH^*_G)$ and $d_1(CH^*_G)$ in terms of the representation theory of $G$ (Theorem \ref{thm:localization-bounds}).

For $G$ a finite group, Totaro defined $d_0(CH^*_G)$ to be the smallest integer $d$ such that the map
$$CH^*_G \rightarrow \prod_{E}CH^*_E \otimes CH^{\le d}_{C_G(E)}$$
is injective, and conjectured that $d_0(CH^*_G)$ is equal to the largest integer $d$ such that $CH^*_G$ contains a $d$th suspension of an unstable $\A$-module \cite[Conjecture 12.8]{Totaro14}.  By Theorem \ref{thm:localization-away-from-nil-p}, the two definitions of $d_0(CH^*_G)$ agree, and the following proposition positively answers Totaro's conjecture.

\begin{prop}\label{prop:totaro-conjecture}
The invariant $d_0(CH^*_G)$ is equal to the largest $d$ such that $CH^*_G$ contains a nonzero submodule of the form $\Sigma^dM$, for $M$ an unstable $\A$-module.
\end{prop}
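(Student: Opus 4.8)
Write $M=CH^*_G$, and for each $n$ set $\operatorname{nil}_nM:=\ker(\lambda_n\colon M\to L_nM)$. The plan is to identify both sides of the asserted equality with the depth of the nilpotent filtration of $M$. First I would record the standard fact that $\operatorname{nil}_nM$ is the largest submodule of $M$ lying in $\Nil_n$: it lies in $\Nil_n$ by the characterization of $L_n$ recalled just before Lemma \ref{lem:n-prime-nilpotents}, and conversely any submodule $Q\subseteq M$ with $Q\in\Nil_n$ maps to zero in $L_nM$ because $L_nM$ is $\Nil_n$-closed (so $\Hom_{\U}(Q,L_nM)=0$), whence $Q\subseteq\operatorname{nil}_nM$. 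Since $\Nil_{n+1}\subseteq\Nil_n$ the submodules $\operatorname{nil}_nM$ are nested, and unwinding the definition, $d_0(M)$ is the least $n_0$ with $\operatorname{nil}_{n_0+1}M=0$; equivalently $d_0(M)=\max\{n:\operatorname{nil}_nM\neq0\}$. (This maximum is attained because $G$ admits a faithful representation, so $d_0(M)<\infty$ by Theorem \ref{thm:localization-bounds}.) Write $D$ for the largest $d$ such that $M$ has a nonzero submodule isomorphic to $\Sigma^dL$ for some $L\in\U$; this is $\ge 0$ since $M$ contains $\F_p$ in degree $0$ as a submodule.

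The inequality $D\le d_0(M)$ is the easy half. If $\Sigma^dL\hookrightarrow M$ with $L\neq0$, then $\Sigma^dL\in\Nil_d$ (a $d$-fold suspension lies in $\Nil_d$ by the very construction of $\Nil_d$, or by Lemma \ref{lem:n-prime-nilpotents}), so $\operatorname{nil}_dM\neq0$ and therefore $d_0(M)\ge d$; taking the supremum over all such $d$ gives $d_0(M)\ge D$.

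For the reverse inequality I would argue at the top of the nilpotent filtration. Set $n_0=d_0(M)$ and $N=\operatorname{nil}_{n_0}M$. Then $N\neq0$, $N\in\Nil_{n_0}$, and, since $N$ is a submodule of $M$ with $\operatorname{nil}_{n_0+1}M=0$, we also have $\operatorname{nil}_{n_0+1}N=N\cap\operatorname{nil}_{n_0+1}M=0$. Thus $N$ is a nonzero $n_0$-nilpotent module whose nilpotent filtration degenerates: $\operatorname{nil}_{n_0}N=N$ while $\operatorname{nil}_{n_0+1}N=0$. Now I would invoke the structure theorem for the nilpotent filtration \cite[Chapter 6]{Schwartz94}: for every unstable module $P$ one has $\operatorname{nil}_sP/\operatorname{nil}_{s+1}P\cong\Sigma^sR_s(P)$ for a reduced unstable module $R_s(P)$. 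Applied to $P=N$ at $s=n_0$, this yields $N=N/\operatorname{nil}_{n_0+1}N\cong\Sigma^{n_0}R_{n_0}(N)$, a nonzero $n_0$-fold suspension that happens to be an honest submodule of $M$; hence $D\ge n_0=d_0(M)$. Combining the two inequalities, $D=d_0(M)=d_0(CH^*_G)$, which is the assertion. Specializing $M=CH^*_G$ finishes the proof.

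The step I expect to require the most care is the invocation of \cite[Chapter 6]{Schwartz94}: it is formulated for $\U^{\top}$, whereas $CH^*_G$ lives in the even-degree category $\U$, and I must know that the top graded piece of the nilpotent filtration of an object of $\U$ is again an honest $\Sigma^{n_0}$ of a module in $\U$. For $p=2$ this is immediate from the equivalence $\U\simeq\U^{\top}$ recorded in Section \ref{sec:chow-rings-as-unstable-modules}. For odd $p$ I would transfer the statement using the relation $\Nil_n=\Nil^{\top}_{2n}\cap\U$ together with the observation that for $P$ concentrated in even degrees every $\A$-submodule of $P$ is automatically even, so that $\operatorname{nil}^{\U}_nP=\operatorname{nil}^{\U^{\top}}_{2n}P$; the key extra point is that the odd graded pieces $R_{2n+1}(P)$ of the $\U^{\top}$-filtration of such a $P$ vanish, because $\Sigma^{2n+1}R_{2n+1}(P)$ is a subquotient of an even module, which forces the reduced module $R_{2n+1}(P)$ into odd degrees, and a nonzero reduced unstable module cannot be concentrated in odd degrees. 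A minor additional point to pin down is the finiteness of $d_0(CH^*_G)$, so that the defining maximum is attained; this follows from the existence of a faithful representation of $G$ via Theorem \ref{thm:localization-bounds}, so the degenerate possibility $d_0=\infty$ never arises here.
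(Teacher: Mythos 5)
Your proof is correct and takes essentially the same approach as the paper: identify $d_0(CH^*_G)$ with the largest $d$ for which the largest $d$-nilpotent submodule $\operatorname{nil}_d(CH^*_G)$ is nonzero, observe that the top nonzero graded piece of the nilpotent filtration is a submodule and is a $d_0$th suspension by Schwartz's Lemma 6.1.4, and conversely that any $\Sigma^d L\subseteq CH^*_G$ witnesses $\operatorname{nil}_d\neq0$. Your extra remarks spelling out the transfer between $\U$ and $\U^{\top}$ (in particular that the odd-indexed graded pieces of the $\U^{\top}$-filtration of an even module vanish) make explicit a point the paper leaves implicit when citing \cite[Lemma 6.1.4]{Schwartz94}.
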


\begin{proof}
Note that $d_0(CH^*_G)$ is equal to the largest $d$ such that $M_d \ne 0$, where $M_d \subseteq CH^*_G$ is the largest unstable submodule of $CH^*_G$ contained in $\Nil_d$.  (Such a $d$ exists by Theorem \ref{thm:localization-bounds}, which will give an upper bound for $d_0(CH^*_G)$.)  By \cite[Lemma 6.1.4]{Schwartz94}, $M_d/M_{d+1}$ is the $d$th suspension of an unstable $\A$-module.  On the other hand, by Proposition \ref{prop:n-nilpotents}, any unstable module of the form $\Sigma^dM$, $M \in \U$, is $2d$-nilpotent when regarded as an object of $\U^{\top}$, hence $d$-nilpotent as an object of $\U$.
\end{proof}

\begin{thm}\label{thm:localization-bounds}
Suppose that $G$ has a faithful representation of degree $n$.  Then 
\begin{enumerate}[a)]
\item $d_0(CH^*_GX) \le n^2 + \dim X - \dim G$ 
\item $d_1(CH^*_GX) \le 2n^2 + \dim X - \dim G$
\end{enumerate}
Suppose $G$ is finite.  Then 
\begin{enumerate}[a)]
\setcounter{enumi}{2}
\item $d_0(CH^*_GX) \le n(n-1)/2 + \dim X$
\item $d_1(CH^*_GX) \le n(n-1) + \dim X$
\end{enumerate}
\end{thm}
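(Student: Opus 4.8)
The plan is to reformulate the two invariants in terms of $\Nil_n$-closedness, prove a Duflot-type closedness estimate for elementary abelian groups, and then transfer it to an arbitrary $G$ by embedding $G$ in $GL(n)$ and applying the descent of Lemma~\ref{lem:quillen-exact-sequence}. Since $\lambda_n\colon M\to L_nM$ realizes localization away from $\Nil_n$ (Theorem~\ref{thm:localization-away-from-nil-p}), for $M\in\U$ the kernel of $\lambda_n$ is the largest submodule of $M$ lying in $\Nil_n$, and its cokernel lies in $\Nil_n$; hence $d_0(M)\le m$ if and only if $M$ has no nonzero submodule in $\Nil_{m+1}$, and $d_1(M)\le m$ if and only if $M$ is $\Nil_{m+1}$-closed. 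The argument rests on two monotonicity facts. First, by Lemma~\ref{lem:n-prime-nilpotents} and exactness of $T_V$, a submodule of $M$ in $\Nil_{m+1}$ is detected inside $T_VM$, so if $M\hookrightarrow M'$ and $d_0(M')\le m$ then $d_0(M)\le m$. Second, a $\Nil_j$-closed module is $\Nil_{j'}$-closed for $j'\ge j$ (since $\Nil_{j'}\subseteq\Nil_j$), and by Lemma~\ref{lem:basic-niln-closed}(a) a kernel of a map between $\Nil_{m+1}$-closed modules is again $\Nil_{m+1}$-closed.

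The technical core is the estimate: \emph{for $S\in\E$ and any smooth $S$-scheme $Z$, the unstable module $CH^*_SZ$ is $\Nil_{\dim Z+1}$-closed}, i.e.\ $d_1(CH^*_SZ)\le\dim Z$. I would prove this by induction on the maximal isotropy rank $r(Z)$, reusing the stratification of Lemma~\ref{lem:l_{S,X}}. If $r(Z)=0$ the action is free, $CH^*_SZ\cong CH^*(Z/S)$ is concentrated in degrees $\le\dim Z$, hence $\Nil_{\dim Z+1}$-closed by Lemma~\ref{lem:basic-niln-closed}(c). If $r(Z)=i>0$, apply Lemma~\ref{lem:exact-sequences}(a) with $X=Z$: $CH^*_SZ_{i-1}$ has smaller $r$ and dimension $\le\dim Z$, hence is $\Nil_{\dim Z+1}$-closed by induction; and each summand $e_{d,W}CH^*_SZ_{d,W}$ fits in the short exact sequence of Lemma~\ref{lem:exact-sequences}(b), whose middle term $CH^*_SN_{d,W}\cong CH^*_SZ_{d,W}\cong CH^*_W\otimes CH^*(Z_{d,W}/W')$ (writing $S=W\times W'$, with $W$ acting trivially and $W'$ freely on $Z_{d,W}$) is $\Nil_{\dim Z_{d,W}+1}$-closed by Lemma~\ref{lem:basic-niln-closed}(c), and whose right term $CH^*_S(N_{d,W}-Z_{d,W})$ has $r<i$ and dimension $\dim N_{d,W}=\dim Z$, hence is $\Nil_{\dim Z+1}$-closed by induction. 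Two applications of Lemma~\ref{lem:basic-niln-closed}(b) then give that $CH^*_SZ$ is $\Nil_{\dim Z+1}$-closed.

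For parts (a) and (b), embed $G\hookrightarrow GL(n)$ via the faithful representation, so that $CH^*_GX\cong CH^*_{GL(n)}(GL(n)\times_G X)$ with $D:=\dim(GL(n)\times_G X)=n^2-\dim G+\dim X$. Since $EGL(n)$ also models $ES$, for any $GL(n)$-scheme $W'$ one has $CH^*_{GL(n)}(W'\times GL(n)/S)\cong CH^*_SW'$; thus Lemma~\ref{lem:quillen-exact-sequence} presents $CH^*_GX$ as the kernel of a map $CH^*_SW\to CH^*_S(W\times GL(n)/S)$ with $W=GL(n)\times_G X$. By the estimate above these two modules are $\Nil_{D+1}$-closed and $\Nil_{D+n^2+1}$-closed, so the kernel is $\Nil_{D+n^2+1}$-closed, giving $d_1(CH^*_GX)\le D+n^2=2n^2+\dim X-\dim G$. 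For (a), $CH^*_GX\hookrightarrow CH^*_SW$ and $d_0(CH^*_SW)\le d_1(CH^*_SW)\le D$, so $d_0(CH^*_GX)\le D=n^2+\dim X-\dim G$ by monotonicity under injections.

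For a finite group $G$ the constant $n^2$ improves to $\binom{n}{2}$ by replacing $GL(n)/S$ with the complete flag variety $GL(n)/B$: the $G$-equivariant map $GL(n)/S\to GL(n)/B$ induces an isomorphism on $CH^*_G(X\times-)$ (its fibres are iterated $\mathbb{G}_m$-bundles whose first Chern classes are $p$-th powers and hence vanish mod $p$, which is the mechanism behind Lemma~\ref{lem:chow-ring-of-x-times-gln-mod-s}), and $\dim GL(n)/B=\binom{n}{2}$. Rerunning the previous paragraph with $GL(n)/B$ in place of $GL(n)/S$, and using that for finite $G$ every centralizer $C_G(E)$ is finite so the fixed loci $X^E$ appearing in the stratification have dimension $\le\dim X$, replaces $n^2$ by $\binom{n}{2}$ throughout and yields $d_0(CH^*_GX)\le\binom{n}{2}+\dim X$ and $d_1(CH^*_GX)\le 2\binom{n}{2}+\dim X=n(n-1)+\dim X$; this is in effect Totaro's argument \cite[Theorem~12.4]{Totaro14}. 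I expect this refinement to be the main obstacle: what is needed is the sharp dimension bookkeeping in the finite case, which hinges on the inequality that a rank-$r$ isotropy group forces its normal-bundle Euler class to have degree at least $r$ --- true because the faithful $GL(n)$-representation restricts faithfully to each elementary abelian $E\subseteq G$, so its nontrivial part has at least $\rk(E)$ distinct characters and hence codimension at least $\rk(E)$ in the fixed locus --- and on adding these contributions over a maximal isotropy chain to produce $\binom{n}{2}$ rather than $n^2$.
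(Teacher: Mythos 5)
Your argument for parts (a) and (b) is essentially the paper's: you reformulate $d_0$ and $d_1$ in terms of $\Nil_n$-closedness, prove by induction on isotropy rank (via the Duflot-type stratification in Lemma~\ref{lem:exact-sequences}) that $d_1(CH^*_SZ)\le\dim Z$ for $S\in\E$, and then transport this to $G$ via the faithfully-flat descent exact sequence of Lemma~\ref{lem:quillen-exact-sequence}. The dimension arithmetic $D = n^2 + \dim X - \dim G$ and $D + n^2$ matches the paper's (which packages the same argument in terms of $d_0,d_1$ monotonicity in Lemma~\ref{lem:quillen-exact-sequence-di-bounds}). That half of the proposal is sound.

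For parts (c) and (d) there is a genuine gap, and you are right to flag it as the main obstacle. Your proposed replacement of $GL(n)/S$ by the full flag variety $GL(n)/B$ does not interface with your core estimate: after descent, the middle term becomes $CH^*_{GL(n)}\bigl(W\times GL(n)/B\bigr)\cong CH^*_B W$, and $B$ is not elementary abelian, so the estimate ``$d_1(CH^*_SZ)\le\dim Z$ for $S\in\E$'' no longer applies. The isomorphism $CH^*_G(X\times GL(n)/S)\cong CH^*_G(X\times GL(n)/B)$ is true (both are flag-bundle presentations), but it is an isomorphism of modules, so it cannot lower the bound you already obtained; what you need is to shrink the \emph{scheme} whose dimension controls the estimate while still landing in $CH^*_S(-)$. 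The heuristic in your last paragraph---summing normal-bundle codimensions over an isotropy chain to produce $\binom{n}{2}$---is not substantiated and, as stated, does not produce the bound.

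The mechanism the paper actually uses is different and needs two ingredients you have not invoked. First, a strengthened core estimate (Lemma~\ref{lem:d1-bound-elementary-abelian-case}): if $S\in\E$ acts on $Z$ and a split torus $T$ acts on $Z$ with \emph{finite} stabilizers, commuting with $S$, then $d_1(CH^*_SZ)\le\dim Z-\dim T$. This refines your estimate by feeding Totaro's vanishing $CH^iZ=0$ for $i>\dim Z-\dim T$ (Lemma~\ref{lem:t-action-knockdown}) into the free-action base case of the induction. Second, keep $GL(n)/S$ in the descent sequence but replace $GL(n)$ by $U\backslash GL(n)$ via $\AA^1$-homotopy invariance, where $U$ is the unipotent upper-triangular subgroup: the residual left action of the diagonal torus $T$ on $(X\times U\backslash GL(n))/G$ has finite stabilizers because $G$ is finite, and it commutes with $S\subseteq T$. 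Then $\dim\bigl((X\times U\backslash GL(n))/G\bigr)-\dim T=\dim X+n(n+1)/2-n=\dim X+n(n-1)/2$, which is the constant in part (c), and the third term of the descent sequence similarly gives $n(n-1)+\dim X$ for part (d). Without the $T$-action refinement of the elementary-abelian estimate, the $n(n-1)/2$ bound is out of reach by this route.
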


I expect that the bounds of Theorem \ref{thm:localization-bounds} can be improved.  By \cite[Theorem 12.7]{Totaro14}, $d_0(CH^*_G) \le n - c$, where $G$ is a finite $p$-group, $n$ is the degree of a faithful representation of $G$, and $c$ is the $p$-rank of the center of $G$.  Furthermore, Totaro and Guillot (\cite{Totaro14}, \cite{Guillot05}) have observed that $d_0(CH^*_G)$ is often less than known bounds in examples, particularly when $p$ is odd.  In another direction, Heard has given bounds for $d_0(M)$ for certain \emph{Noetherian} $M \in \K^{\top}$ (\cite{Heard20}).

\begin{lem}\label{lem:quillen-exact-sequence-di-bounds}(cf \cite[Proposition 3.6(b)]{HLS93}) Let $M_1, M_2, M_3 \in \U$.
\begin{enumerate}[a)]
\item If $0 \rightarrow M_1 \rightarrow M_2 \rightarrow M_3$ is an exact sequence, then $d_0M_1 \le d_0M_2$ and $d_1M_1 \le \max(d_1M_2, d_0M_3)$.
\item If $0 \rightarrow M_1 \rightarrow M_2 \rightarrow M_3 \rightarrow 0$ is an exact sequence, then $d_1M_2 \le \max(d_1M_1,d_1M_3)$.
\end{enumerate}
\end{lem}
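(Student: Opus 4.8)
The plan is to recast $d_0$ and $d_1$ in terms of maximal $n$-nilpotent submodules and $\Nil_n$-closedness, and then to carry out the long-exact-sequence bookkeeping in the style of \cite[Proposition 3.6]{HLS93}. Throughout, the asserted inequalities are vacuous whenever a relevant $d_i$ is infinite, so we may assume all invariants appearing on the right-hand sides are finite.

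First I would record a reformulation. For $M \in \U$ let $n_nM \subseteq M$ be the largest submodule lying in $\Nil_n$, which exists by the proof of Lemma \ref{lem:nil-prime-are-localizing}; note $n_{n+1}M \subseteq n_nM$ since $\Nil_{n+1} \subseteq \Nil_n$. Since $\ker(\lambda_n \colon M \to L_nM)$ lies in $\Nil_n$ we get $\ker\lambda_n \subseteq n_nM$, while the composite $n_nM \hookrightarrow M \xrightarrow{\lambda_n} L_nM$ lies in $\Hom_{\U}(n_nM, L_nM) = 0$ because $n_nM \in \Nil_n$ and $L_nM$ is $\Nil_n$-closed; hence $\ker\lambda_n = n_nM$, so $\lambda_n$ is injective iff $n_nM = 0$ and $d_0M$ is the largest $d$ with $n_dM \neq 0$. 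Similarly $\lambda_n$ is an isomorphism iff $M$ is $\Nil_n$-closed: if $M$ is $\Nil_n$-closed then $\ker\lambda_n = n_nM = 0$ (its inclusion into $M$ is a map from an object of $\Nil_n$ into a $\Nil_n$-closed module, hence zero), and the extension $0 \to M \to L_nM \to \coker\lambda_n \to 0$ splits because $\coker\lambda_n \in \Nil_n$ and $\Ext^1_{\U}(\coker\lambda_n, M) = 0$, so $\coker\lambda_n$ is a submodule of the $\Nil_n$-closed module $L_nM$ lying in $\Nil_n$ and therefore vanishes; conversely $L_nM$ is always $\Nil_n$-closed. Since $\Nil_{n+1} \subseteq \Nil_n$, being $\Nil_n$-closed implies being $\Nil_{n+1}$-closed, so $\lambda_n$ is an isomorphism for all $n > d_1M$ and $n_nM = 0$ for all $n > d_0M$.

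For part (a), the hypothesis identifies $M_1$ with a submodule of $M_2$, and since $\Nil_n$ is closed under submodules, $n_nM_1 \subseteq n_nM_2$, so $n_nM_2 = 0 \Rightarrow n_nM_1 = 0$ and therefore $d_0M_1 \le d_0M_2$. For the second inequality put $n = \max(d_1M_2, d_0M_3) + 1$ and $Q = M_2/M_1$, which embeds in $M_3$. Then $M_2$ is $\Nil_n$-closed and $n_nQ \subseteq n_nM_3 = 0$, whence $\Hom_{\U}(N, Q) = 0$ for every $N \in \Nil_n$ (a nonzero such map would have image a nonzero submodule of $Q$ in $\Nil_n$). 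Applying $\Hom_{\U}(N, -)$ to $0 \to M_1 \to M_2 \to Q \to 0$ for $N \in \Nil_n$ and using $\Hom_{\U}(N, M_2) = \Ext^1_{\U}(N, M_2) = 0$, the long exact sequence gives $\Hom_{\U}(N, M_1) = 0$ and $\Ext^1_{\U}(N, M_1) \cong \Hom_{\U}(N, Q) = 0$; hence $M_1$ is $\Nil_n$-closed and $d_1M_1 \le n - 1 = \max(d_1M_2, d_0M_3)$. For part (b), set $n = \max(d_1M_1, d_1M_3) + 1$, so $M_1$ and $M_3$ are $\Nil_n$-closed; Lemma \ref{lem:basic-niln-closed}(b) then gives that $M_2$ is $\Nil_n$-closed, i.e. $d_1M_2 \le n - 1 = \max(d_1M_1, d_1M_3)$.

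The only non-formal input is the reformulation in the second paragraph, which uses that $\Nil_n = \N(\I_n)$ for the set of injectives $\I_n$ (Lemma \ref{lem:n-prime-nilpotents}), that $\U$ has enough injectives so that the $\Ext^1_{\U}$ long exact sequence is available, and the characterization of localization functors via $\N(\I)$-closedness recalled in Section \ref{sec:localization}. Once that dictionary is in place the arguments are routine diagram chases; the point that most wants care is the consistent handling of the shift by $1$ between each $d_iM$ and the corresponding threshold $n$.
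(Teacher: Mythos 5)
Your proof is correct, but it takes a slightly different route than the paper's for part (a). The paper's proof is very compact: it observes that $L_n$, being a composition of the exact functor $r_n$ with its right adjoint $s_n$, is left exact, so applying $L_n$ to the exact row $0 \to M_1 \to M_2 \to M_3$ and invoking naturality of $\lambda_n$ yields a commutative ladder to which the five lemma applies directly (taking $n > d_0M_2$ gives $\lambda_n(M_1)$ monic; taking $n > \max(d_1M_2, d_0M_3)$ gives $\lambda_n(M_1)$ iso). You instead first establish the dictionary identifying $\ker\lambda_n$ with the maximal $n$-nilpotent submodule $n_nM$ and $\lambda_n$ being an isomorphism with $M$ being $\Nil_n$-closed (i.e.\ $\Hom_\U(N,M)=\Ext^1_\U(N,M)=0$ for $N\in\Nil_n$), and then run the $\Hom/\Ext^1$ long exact sequence by hand. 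The two arguments are morally the same --- the five-lemma chase in the paper is essentially your $\Ext$ bookkeeping in disguise --- but the paper's is shorter and keeps the localization maps in the foreground, while yours makes explicit the standard characterization of localization (which the paper only recalls in passing at the start of Section 5) and is somewhat more self-contained. For part (b), you and the paper take the identical route via Lemma \ref{lem:basic-niln-closed}(b). One small thing worth stating explicitly in your write-up is that the dictionary ``$\lambda_n$ iso $\Leftrightarrow$ $M$ is $\Nil_n$-closed'' uses that $\U$ has enough injectives (so that $\Ext^1$ is well-behaved) and that $\Nil_n$ is localizing; you gesture at this in your closing paragraph, and those facts are indeed supplied by Lemma \ref{lem:nil-prime-are-localizing} and the surrounding discussion.
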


\begin{proof}
The localization functor $L_n$ is given by the composition of an exact functor with its right adjoint, so it is left exact.  The bounds in (a) follow from the five lemma.  Lemma \ref{lem:basic-niln-closed} implies (b).
\end{proof}

\begin{lem}\cite[Lemma 5.3]{Totaro14}\label{lem:t-action-knockdown}
Suppose a split torus $T$ acts on a smooth scheme $X$ with finite stabilizer groups.  Then $CH^iX = 0$ for $i > \dim X - \dim T$.
\end{lem}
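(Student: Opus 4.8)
The plan is to prove the equivalent statement --- obtained by translating codimension to dimension --- that $CH_k X = 0$ for every $k < r := \dim T$, by induction on $d := \dim X$, formulating the claim for all finite-type $k$-schemes carrying a $T$-action with finite stabilizers (smoothness plays no role in the argument). Replacing $X$ by $X_{\mathrm{red}}$ does not change Chow groups, and since $T$ is connected it fixes every irreducible component of $X$; pushing forward along the closed immersions of the components reduces us to the case $X$ irreducible. The induction anchors at $d = 0$, where the claim is vacuous. (When $d = r$ the action has a dense orbit isomorphic to a quotient of the split torus $T$ by a finite subgroup, hence to $G_m^r$, and $CH_k(G_m^r) = 0$ for $k < r$ since $CH^*(G_m^r) = \Z$ is concentrated in degree $0$; but this case is already subsumed in the general step below.)

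For the inductive step, with $X$ irreducible of dimension $d$ and $r \ge 1$, I would fix a one-dimensional subtorus $G_m \subseteq T$ that is a direct factor, so $T_1 := T/G_m \cong G_m^{r-1}$. Passing to the principal isotropy locus and then invoking Rosenlicht's theorem (together with the standing hypotheses guaranteeing that the relevant quotients exist as separated algebraic spaces, after shrinking), there is a dense $T$-invariant open $V \subseteq X$ on which every $G_m$-stabilizer equals one fixed finite subgroup $\mu_n \subseteq G_m$ and which admits a geometric quotient $\pi \colon V \to X_1 := V/G_m$; then $\dim X_1 = d - 1$, the induced action of $G_m/\mu_n \cong G_m$ is free, and $\pi$ is a $G_m$-torsor (Zariski-locally trivial, $G_m$ being special). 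Put $W := X \setminus V$, a closed $T$-invariant subset of dimension $< d$ on which $T$ still acts with finite stabilizers.

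The step is then finished by three observations. First, $T_1$ acts on $X_1$ with finite stabilizers: the stabilizer of the image of a point $x \in V$ is the component group of the closed subgroup $H_x := \{t \in T \mid tx \in G_m \cdot x\}$, and since $T$ is abelian the dimension bound $\dim H_x \le \dim(G_m \cdot x) \le 1$ forces the identity component of $H_x$ to be exactly $G_m$, so this component group is finite. Second, writing $V$ as the complement of the zero section of the line bundle $L \to X_1$ associated to the torsor $\pi$, the localization sequence for the zero section together with homotopy invariance $CH_k L \cong CH_{k-1} X_1$ exhibits $CH_k V$ as a quotient of $CH_{k-1} X_1$; by the inductive hypothesis applied to $(X_1, T_1)$ this vanishes once $k - 1 < r - 1$, so $CH_k V = 0$ for $k < r$. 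Third, the inductive hypothesis applied to $(W, T)$ gives $CH_k W = 0$ for $k < r$. The localization sequence $CH_k W \to CH_k X \to CH_k V \to 0$ then forces $CH_k X = 0$ for $k < r$, completing the induction.

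I expect the main obstacle to lie in the second paragraph: producing a bona fide $G_m$-torsor $\pi \colon V \to X_1$ over an open $V \subseteq X$ with strictly smaller complement requires controlling at once the locus where a geometric quotient by $G_m$ exists and the (principal isotropy) locus where the $G_m$-stabilizer is constant, and then checking that the quotient is tame enough for the localization and homotopy-invariance manipulations. The companion fact that $T/G_m$ again acts with finite stabilizers --- short as it is --- is exactly what lets the induction close: the quotient branch reduces $\dim X$ and $\dim T$ simultaneously, while the closed-complement branch reduces only $\dim X$, and both descend to the trivial case.
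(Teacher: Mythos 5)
The paper does not prove this lemma itself; it is quoted verbatim from Totaro's book \cite[Lemma 5.3]{Totaro14}, whose proof is exactly the argument you reconstruct: pass to a dense $T$-stable open with constant finite stabilizer via Rosenlicht, realize it as a torus torsor over a geometric quotient of dimension $\dim X - \dim T$, use the localization sequence for the zero section of the associated bundle together with homotopy invariance to kill its low-dimensional Chow groups, and finish by Noetherian induction on the closed complement. Your only departure is organizational --- peeling off one $G_m$ factor at a time with a simultaneous induction on $\dim X$ and $\dim T$, which necessitates the small extra check that $T/G_m$ still acts with finite stabilizers on $V/G_m$ --- rather than quotienting by the whole torus at once; both routes rest on the same Rosenlicht/torsor/localization ingredients and your version is correct.
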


\begin{lem}\label{lem:d1-bound-elementary-abelian-case}
Suppose $S \in \E$ acts on a smooth scheme $X$, $T$ is a split torus acting on $X$ with finite stabilizer groups, and the actions of $S$ and $T$ commute.  Then $d_1(CH^*_SX) \le \dim X - \dim T$.  In particular, $d_1(CH^*_SX) \le \dim X$ for any smooth $S$-scheme $X$.
\end{lem}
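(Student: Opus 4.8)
The plan is to prove the bound $d_1(CH^*_SX) \le \dim X - \dim T$ by induction on $r(X) := \max_T \rk T$, where $T$ runs over the isotropy subgroups of points of $X$, using the Duflot-type filtration packaged in Lemma~\ref{lem:exact-sequences} in the same way as the proof of Lemma~\ref{lem:l_{S,X}}. At each stage of the induction one produces new smooth schemes (the open stratum $X_{r-1}$, the normal bundles $N_{d,W}$, the complements $N_{d,W}-X_{d,W}$, and free quotients $X_{d,W}/W'$), and a recurring routine step will be to check that each again carries commuting actions of an elementary abelian group and a split torus with finite stabilizers, so that the inductive hypothesis and Lemma~\ref{lem:t-action-knockdown} remain available. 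Since $T$ centralizes $S$, conjugation by $T$ fixes every isotropy subgroup, so the locally closed strata $X_{d,W}$ are $T$-stable; and stabilizers only shrink on passing to a fiber of a vector bundle or to a quotient by a commuting free action, so finiteness of $T$-stabilizers is preserved throughout.

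For the base case $r(X)=0$, the action of $S$ is free, so $CH^*_SX \cong CH^*(X/S)$; the residual $T$-action on $X/S$ has finite stabilizers, so Lemma~\ref{lem:t-action-knockdown} shows $CH^*(X/S)$ is concentrated in degrees $\le \dim X - \dim T$. A module concentrated in degrees $\le D$ is $\Nil_{D+1}$-closed by Lemma~\ref{lem:basic-niln-closed}(c) applied with $V$ the trivial group, hence has $d_1 \le D$; this settles the base case.

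For the inductive step, assume $r(X)=r>0$ and apply Lemma~\ref{lem:exact-sequences}(a) with $i=r$, so that $X_r = X$, giving
$$0 \to \bigoplus_{d,W} e_{d,W}CH^*_SX_{d,W} \to CH^*_SX \to CH^*_SX_{r-1} \to 0$$
with $W$ running over the rank-$r$ subgroups of $S$. By Lemma~\ref{lem:quillen-exact-sequence-di-bounds}(b), applied repeatedly to split off the finitely many summands, it suffices to bound $d_1$ of $CH^*_SX_{r-1}$ and of each $e_{d,W}CH^*_SX_{d,W}$ by $\dim X - \dim T$. The first follows from the inductive hypothesis, since $r(X_{r-1})<r$ and $\dim X_{r-1}\le \dim X$. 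For the second, I would invoke Lemma~\ref{lem:exact-sequences}(b) together with Lemma~\ref{lem:quillen-exact-sequence-di-bounds}(a) — this is the natural way to handle $e_{d,W}CH^*_SX_{d,W}$, which is not itself a suspension — to reduce the claim to bounding $d_1(CH^*_SN_{d,W})$ and $d_0(CH^*_S(N_{d,W}-X_{d,W}))$. The complement $N_{d,W}-X_{d,W}$ is handled by induction: the isotypic decomposition of the normal bundle contains no trivial $W$-representation, so every point of $N_{d,W}-X_{d,W}$ has isotropy a proper subgroup of $W$, whence $r(N_{d,W}-X_{d,W})<r$; together with $d_0\le d_1$ this yields $d_0(CH^*_S(N_{d,W}-X_{d,W})) \le \dim(N_{d,W}-X_{d,W})-\dim T = \dim X - \dim T$.

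The main obstacle is the remaining term $CH^*_SN_{d,W}$, which cannot be fed back into the induction because its zero section $X_{d,W}$ still has isotropy of full rank $r$. The idea is to use $\AA^1$-homotopy invariance to identify $CH^*_SN_{d,W} \cong CH^*_SX_{d,W}$, then write $S = W \oplus W'$ and observe that $W$ acts trivially on $X_{d,W}$ while $W'$ acts freely; the Chow K{\"u}nneth property of $BW$ then gives $CH^*_SX_{d,W} \cong CH^*_W \otimes CH^*(X_{d,W}/W')$. Applying Lemma~\ref{lem:t-action-knockdown} to the residual $T$-action on $X_{d,W}/W'$ shows the second tensor factor is concentrated in degrees $\le \dim X_{d,W} - \dim T = \dim X - d - \dim T$, so Lemma~\ref{lem:basic-niln-closed}(c) makes $CH^*_W \otimes CH^*(X_{d,W}/W')$ sufficiently $\Nil$-closed to give $d_1(CH^*_SN_{d,W}) \le \dim X - d - \dim T \le \dim X - \dim T$. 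Assembling the bounds through the two exact sequences gives $d_1(CH^*_SX) \le \dim X - \dim T$, and the final assertion follows by taking $T$ the trivial torus.
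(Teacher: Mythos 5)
Your proof is correct and follows essentially the same route as the paper's: the base case via a free quotient and Lemma \ref{lem:t-action-knockdown}, then induction on $r(X)$ using the Duflot-type sequences of Lemma \ref{lem:exact-sequences} with $i=r(X)$, Lemma \ref{lem:quillen-exact-sequence-di-bounds} to propagate $d_0$/$d_1$ bounds, and the $\AA^1$-invariance/K\"unneth identification $CH^*_SN_{d,W}\cong CH^*_W\otimes CH^*(X_{d,W}/W')$ together with Lemmas \ref{lem:basic-niln-closed}(c) and \ref{lem:t-action-knockdown} for the normal-bundle term. You have simply spelled out a few steps that the paper leaves implicit (handling $CH^*_SX_{r-1}$ by the inductive hypothesis, and checking that $T$-stability and finite $T$-stabilizers persist on the strata, normal bundles, and free quotients).
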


Note that, since $d_0(M) \le d_1(M)$ for all $M \in \U$, Lemma \ref{lem:d1-bound-elementary-abelian-case} also gives a bound for $d_0(CH^*_SX)$.

\begin{proof}
Let $r(X) = \max_E(\rk E)$ where $E$ ranges over all isotropy subgroups of $S$ of points on $X$.  If $r(X) = 0$, then $S$ acts freely on $X$, so $CH^*_SX \cong CH^*X/S$.  As $T$ acts on $X/S$ with finite stabilizers, 
$$d_1(CH^*_SX) = d_1(CH^*X/S) \le \dim(X) - \dim(T)$$
by Lemmas \ref{lem:basic-niln-closed} and \ref{lem:t-action-knockdown}.  Now suppose that $r(X) > 0$.

By Lemma \ref{lem:exact-sequences} for $i = r(X)$ and Lemma \ref{lem:quillen-exact-sequence-di-bounds}, it suffices to prove the claim for $CH^*_SN_{d,W}$ and $CH^*_S(N_{d,W} - X_{d,W})$, where $0 \le d \le \dim(X)$ and $W = (\Z/p)^{r(X)} \subseteq S$.  (Since the action of $T$ commutes with the action of $S$, the action of $T$ restricts to an action on $X_{d,W}$, inducing actions on $N_{d,W}$ and $N_{d,W} - X_{d,W}$.)

By $\AA^1$-homotopy invariance, 
$$CH^*_SN_{d,W} \cong CH^*_SX_{d,W} \cong CH^*_W(X_{d,W}/W') \cong CH^*_W \otimes CH^*(X_{d,W}/W'),$$  
where $S = W \oplus W'$.  Thus the claim holds for $N_{d,W}$ by Lemmas \ref{lem:basic-niln-closed} and \ref{lem:t-action-knockdown}.

As in the proof of Lemma \ref{lem:l_{S,X}}, $r(N_{d,W} - X_{d,W}) < r(X)$, so the claim holds for $N_{d,W} - X_{d,W}$ by induction.
\end{proof}

\begin{proof}[Proof of Theorem \ref{thm:localization-bounds}]
Let $G \rightarrow GL(n)$ be a faithful representation of $G$.  As in Section \ref{sec:Computation_of_T_VCH^*_GX}, let $S \subset GL(n)$ be the subgroup of diagonal matrices of order 1 or $p$.  By Lemma \ref{lem:quillen-exact-sequence}, we have an exact sequence 
\begin{equation*}
0 \rightarrow CH^*_G(X) \xrightarrow{p^*} CH^*_G(X \times GL(n)/S)  \xrightarrow{p_1^* - p_2^*} CH^*_G(X \times GL(n)/S \times GL(n)/S).
\end{equation*}
Since 
$$CH^*_G(X \times GL(n)/S) \cong CH^*_S(X \times_G GL(n))$$
and 
$$CH^*_G(X \times GL(n)/S \times GL(n)/S) \cong CH^*_{S \times S}((X \times GL(n) \times GL(n))/G),$$
parts (a) and (b) follow from Lemmas \ref{lem:quillen-exact-sequence-di-bounds} and \ref{lem:d1-bound-elementary-abelian-case}.

Now suppose that $G$ is finite.  Let $U \subseteq GL(n)$ be the subgroup of upper triangular matrices with 1's on the diagonal.  Since $G$ is finite, the quotients $X \times_G U \backslash GL(n)$ and $(X \times U\backslash GL(n) \times U\backslash GL(n))/G$ exist as a smooth schemes.  As $U$ is an iterated extension of copies of the additive group $G_a$, we have that 
$$CH^*_S(X \times_G GL(n)) \cong CH^*_S(X \times_G U\backslash GL(n)),$$
and
$$CH^*_{S \times S}((X \times GL(n) \times GL(n))/G) \cong CH^*_{S \times S}((X \times U\backslash GL(n) \times U\backslash GL(n))/G)$$
by $\AA^1$-homotopy invariance.  The subgroup $T \subseteq GL(n)$ of diagonal matrices normalizes $U$, so $T$ acts on $(X \times U\backslash GL(n))/G$ on the left by multiplication.  Moreover, the action of $T$ commutes with the action of $S$, so parts (c) and (d) follow from Lemmas \ref{lem:quillen-exact-sequence-di-bounds} and \ref{lem:d1-bound-elementary-abelian-case}.
\end{proof}

\bibliography{lannes-t-functor-references}
\bibliographystyle{alpha}

\address
\end{document}